\newif\ifcolorcomments
\def\bc{\begin{center}}
\def\ec{\end{center}}
\def\be{\begin{equation}}
\def\ee{\end{equation}}
\def\N{\mathbb N}
\def\Z{\mathbb Z}
\def\R{\mathbb R}
\def\LL{\mathcal L}
\DeclareMathOperator{\diam}{diam}
\def\a{\alpha}
\def\Z{\mathbb Z}
\newtheorem{lem}{Lemma}[section]
\newtheorem{definition}{Definition}[section]
\newtheorem{theorem}{Theorem}[section]
\newtheorem{lemma}[lem]{Lemma}
\newtheorem{corollary}{Corollary}[section]
\theoremstyle{remark}
\newtheorem{remark}{\bf Remark}[section]
\numberwithin{equation}{section}
\newtheorem{que}{\bf {Question}}[section]
\newif\ifdraft\drafttrue
\begin{document}

\subjclass[2020] {Primary 37A50, Secondary 28A80, 60A10.}

\title{On the Intersection of Dynamical Covering Sets with Fractals}

\author[Zhang-nan Hu]{Zhang-nan Hu}
\address{Zhang-nan Hu, School of Mathematics, South China University of Technology, Guangzhou, 510641, China}
\email{hnlgdxhzn@163.com}
\author{Bing Li*}\thanks{* Corresponding author}
\address{Bing Li, School of Mathematics, South China University of Technology, Guangzhou, 510641, China}
\email{scbingli@scut.edu.cn}
\author[Yimin Xiao]{Yimin Xiao}
\address{Yimin Xiao, Department of Statistics and Probability, Michigan State University, East Lansing, MI, 48824, USA}
\email{xiaoy@msu.edu}

\begin{abstract} 
Let $(X,\mathscr{B}, \mu,T,d)$ be a measure-preserving dynamical system with exponentially mixing property, and let $\mu$ 
be an Ahlfors $s$-regular probability measure. The dynamical covering problem concerns the set $E(x)$ of points which are covered 
by the orbits of $x\in X$ infinitely many times. We prove that the Hausdorff dimension of the intersection of $E(x)$ and any regular 
fractal $G$ with $\dim_{\rm H}G>s-\alpha$ equals $\dim_{\rm H}G+\alpha-s$, where $\alpha=\dim_{\rm H}E(x)$ $\mu$--a.e. Moreover, we obtain the packing 
dimension of $E(x)\cap G$ and an estimate for $\dim_{\rm H}(E(x)\cap G)$ for any analytic set $G$. 
\end{abstract}

\maketitle

\section{Introduction}

Let $(X,d)$ be a compact metric space and let $(X,\mathscr{B},T,\mu,d)$ be a metric measure preserving system (m.m.p.s. for short). 
The distribution of the orbit of a point in $X$ is an important topic in ergodic theory and has been studied by many authors. See, 
for example,  \cite{AP,Bo,BW,FLL,FanScT13,fur,HLSW}. The well-known Poincar\'e recurrence theorem shows that $\mu$-a.e. $x\in X$ is 
recurrent, that is
\[\liminf_{n\to\infty}d(T^nx,x)=0.\]
Boshernitzan \cite{Bo} proved that if there is some $\tau>0$ such that the $\tau$-dimensional Hausdorff measure $\mathcal{H}^{\tau}$ 
of $X$ is $\sigma$-finite, then for $\mu$-a.e. $x\in X$,
\[\liminf_{n\to\infty}n^{\frac{1}{\tau}}d(T^nx,x)<\infty.\]
If $\mu$ is ergodic, then for every fixed point $y\in X$, we have, for $\mu$-a.e. $x\in X$,
\[\liminf_{n\to\infty}d(T^nx,y)=0.\]
For an exponentially mixing metric measure preserving system, Fan, Langlet and Li \cite{FLL} proved that if $t<1/\alpha_{max}$, 
then for $\mu$-a.e. $x\in X$, we have $\liminf\limits_{n\to\infty}n^td(T^nx,y)=0$ for uniformly for all $y\in X$, where $\alpha_{max}$ 
is the maximal local dimension of $\mu$.

Hill and Velani \cite{HV} introduced the shrinking targets theory, which concerns the following set of points whose orbits are close to 
a given point, that is for any given $y \in X$, 
\begin{equation}\label{Eq:HV}
S(y) = \{x\in X\colon T^nx\in B(y,\ell_n)~ \, {\rm i.o.}\},
\end{equation}
where $\{\ell_n\}_{n\ge1}$ is a sequence of positive real numbers tending to 0 and i.o. stands for infinitely often. Li, Wang, 
Wu and Xu \cite{LWWX} studied the shrinking 
target problem in the case when $T$ is the Gauss map and determined the Hausdorff dimension of the set $S(y)$ in 
(\ref{Eq:HV}) for certain choices of $\{\ell_n\}$.  Bugeaud and Wang \cite{BW} studied the problem for the case when $T$ 
is the $\beta$-transformation. Aspenberg and Persson \cite{AP} extended their results to piecewise expanding maps.

Motivated by the Diophantine approximation, Fan, Schmeling and Troubetzkoy~\cite{FanScT13} proposed the dynamical 
covering set defined by
\begin{equation}\label{Eq:DyCov}
E(x)=\{y\in X\colon T^nx\in B(y,\ell_n) ~{\rm i.o. }\},
\end{equation} 
which is the set of points $y$ that are well approximated by the orbit of $x$. Among other interesting results, they considered the 
case when $X$ is the unit interval and $T\colon x\mapsto 2x$ $(\bmod\thinspace1)$ and computed $\dim_{\rm H} E(x)$, where 
$\dim_{\rm H}$ denotes the Hausdorff dimension. In \cite{Liao}, Liao and Seuret determined $\dim_{\rm H}E(x)$ when $T$ is an 
expanding Markov map. Later, Persson and Rams \cite{PR17} considered more general piecewise expanding maps than the 
Markov maps.
 
In 2017, Wang, Wu and Xu \cite{Wang} considered the case when $X$ is the middle-third Cantor set, $Tx=3x$ (mod 1), and $\mu$ 
is the standard Cantor measure. They gave a complete characterization of the size $E(x)$ for $\mu$-almost all $x$. In \cite{ZL}, 
Hu and Li investigated the dynamical covering sets in $(X,\mathscr{B},T,\mu,d)$ with exponentially mixing property, where $\mu$ is 
an Ahlfors $s$-regular Borel probability measure. They showed that the measure $\mu(E(x))$ is 0 or 1 for $\mu$-a.e. $x$ according 
to the convergence or divergence of the series $\sum_{n=1}^{\infty}\ell_n^s$, and for $\mu$-a.e. $x$,
\[\dim_{\rm H}E(x) = \a,\]
where $\a$ is the {\it upper Besicovitch-Taylor index} of $\{\ell_n\}_{n\ge1}$ defined by
\begin{equation} \label{Def:BTindex}
\a:=\inf\bigg\{t\le s\colon\sum_{n=1}^{\infty}\ell_n^t<\infty\bigg\}=\sup\bigg\{t\le s\colon\sum_{n=1}^{\infty}\ell_n^t=\infty\bigg\}.
\end{equation}
 In particular, these results hold when $X$ is the middle-third Cantor set and $\mu$ is the standard Cantor measure.

Motivated by the aforementioned research, we are interested in following natural questions for the dynamical covering set.
\begin{que}\label{main1}
 For a given set $G\subset X$, are there points in $G$ which can be well approximated by the orbit of $x \in X$?
 Equivalently, when is $E(x)\cap G\ne\varnothing$ for $x\in X$? 
\end{que}
\begin{que}\label{main2}
	If $E(x)\cap G\ne\varnothing$, then how large is the intersection?
\end{que}
 
Question \ref{main1} is also closely related to Mahler's question \cite{Mahler} which is concerned with approximating the 
numbers in the middle-third Cantor set $C_{1/3}$ by rational numbers and bears some analogy with the dynamical covering 
set. Several authors have investigated Mahler's question by measuring the size of the intersection set 
$\mathcal{W}_{\mathcal{A}}(\psi)\cap C_{1/3}$, where $\mathcal{A}$ is an infinite subset of $\mathbb{N}$ and
\[\mathcal{W}_{\mathcal{A}}(\psi)=\bigg\{x\in \R\colon \Big|x-\frac{p}{q}\Big|\le \psi(q) ~{\rm for ~infinitely~many}~(p,q)
\in\mathbb{Z}\times \mathcal{A}\bigg\},
\]
and $\psi$ is the approximation speed. For example, when $\mathcal{A}:=\{3^n\colon n=0,1,2\dots\}$, Levesley, Salp and 
Velani \cite{LSV} studied the $f$-Hausdorff measure $\mathcal{H}^f$ of the set $\mathcal{W}_{\mathcal{A}}(\psi)\cap C_{1/3}$, 
where $f$ is a measure function, and provided a criterion for $\mathcal{H}^f(\mathcal{W}_{\mathcal{A}}(\psi)\cap C_{1/3})$ 
to be 0 or $\mathcal{H}^f(C_{1/3})$. As for the case when ${\mathcal{A}}=\N$, the problem is still open.  
Levesley, Salp and Velani \cite{LSV} conjectured that if $\psi (q) = q^{-\tau}$ with $ \tau\ge2$ then
\begin{equation}\label{conj1}
\dim_{\rm H}(\mathcal{W}_{\N}(q^{-\tau})\cap C_{1/3})=\frac{2}{\tau}\dim_{\rm H} C_{1/3}. 
\end{equation}
However, Bugeaud and Durand \cite{YA} disagreed with (\ref{conj1}) and proposed another conjecture:
\begin{equation}\label{conj}
\dim_{\rm H}(\mathcal{W}_{\N}(q^{-\tau})\cap C_{1/3})= \max\bigg\{\dim_{\rm H}(\mathcal{W}_{\N}(q^{-\tau}))
+\dim_{\rm H} C_{1/3}-1,\, \frac{1}{\tau}\dim_{\rm H} C_{1/3}\bigg\}.
\end{equation}
Bugeaud and Durand \cite{YA} provided some results to support their conjecture. In particular, they showed that 
(\ref{conj}) holds for a natural probabilistic model which is a random covering set on the unit circle $\mathbb{T}$ 
generated by intervals whose centers are uniformly distributed independent random variables in $\mathbb{T}$
(see \cite[Section~2]{YA}). Recently, Yu \cite{Hanyu} proved that the conjecture (\ref{conj}) holds for the middle-$p$th 
Cantor set when $p>10^7$ is odd and $\tau\in (1,1+c)$ for some number $c>0$. Here, for each odd integer $p>2$, 
the middle-$p$th Cantor set is the set of numbers whose base $p$ expansions do not have digit $(p-1)/2$.

Before stating the main results of this paper, we recall some definitions that will be used throughout this paper.
  
\begin{definition}
A Borel measure $\mu$ on $(X,\mathscr{B})$ is called Ahlfors~s-regular $(0< s < \infty)$ if there exists a constant 
$1\le c_1 <\infty$ such that 
\begin{equation}\label{eqah}
	{c_1}^{-1}r^s\le\mu\big(B(x,r)\big)\le c_1\,r^s
\end{equation} 
for all $x\in X$ and $0<r\le\diam X$, where $B(x,r)$ is the closed ball in metric $d$ whose center is $x$ 
with radius $r$ and $\diam X$ is the diameter of $X$. 
\end{definition}

\begin{definition}
A m.m.p.s. $(X,\mathscr{B},\mu,T,d)$ is $\textit{exponentially mixing}$ if there exist two constants $C>0$ and $0<\rho<1$ 
such that 
\[|\mu(E|T^{-n}F)-\mu(E)|\le C\rho^n\]
for all $n\ge1$, balls $E \subseteq X$, and measurable sets $F\in\mathscr{B}$ with $\mu(F)>0$. Here $\mu(A|B)$ denotes 
the conditional measure $\frac{\mu(A\cap B)}{\mu(B)}$. Sometimes we say $\mu$ is exponentially mixing.
\end{definition}

Throughout we 
denote packing dimension and upper box dimension in the metric space $(X,d)$ by  $\dim_{\rm P}$ and 
$\overline{\dim}_{\rm B}$, respectively. We adopt the convention that the Hausdorff dimension and packing dimension of empty sets are equal to $-\infty$ as in \cite{YA} to distinguish the empty set from a non-empty set with dimension 0.

Theorem \ref{dc3} provides a criterion for 
Question \ref{main1}. The condition (C) in Theorem \ref{dc3} is stated in the Section 2. 

\begin{theorem}\label{dc3}
Let $(X,\mathscr{B},\mu,T,d)$ be an exponentially mixing m.m.p.s. and the measure $\mu$ be Ahlfors s-regular 
with $0<s<\infty$. Let $\{\ell_n\}_{n\ge1}$ be a sequence of positive numbers tending to 0 with the upper 
Besicovitch-Taylor index $\alpha<s$ and, for any $x \in X$,  let $E(x)$ be the dynamical covering set defined in 
\eqref{Eq:DyCov}. Then  for any analytic set $G\subset X$ we have, for $\mu$-almost every $x\in X$
 \begin{align*}
E(x)\cap G \left\{\begin{array}{cl}
=\varnothing& \quad \text{if~}\dim_{\rm P}(G)<s-\alpha,\\[2ex]
\ne\varnothing&\quad \text{if~}\dim_{\rm H}(G)>s-\alpha.
 \end{array}\right.
\end{align*}
 If, in addition, the condition (C) holds, then 
\begin{equation*}
	E(x)\cap G\ne\varnothing  \quad \text{if~ $\dim_{\rm P}(G)>s-\alpha$}.
\end{equation*}
\end{theorem}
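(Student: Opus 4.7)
The plan is to treat the three claims in turn. For the emptiness assertion, I fix $t$ with $\dim_{\rm P}(G)<t<s-\alpha$ and use the countable-stability characterisation of packing dimension to write $G=\bigcup_{i\ge 1}G_i$ with $\overline{\dim}_{\rm B} G_i\le t$ for every $i$. The definition of upper box dimension lets me cover the closed $\ell_n$-neighbourhood $(G_i)_{\ell_n}:=\{z\in X:d(z,G_i)\le\ell_n\}$ by $\lesssim \ell_n^{-t}$ balls of radius $2\ell_n$, so Ahlfors $s$-regularity of $\mu$ yields $\mu((G_i)_{\ell_n})\lesssim \ell_n^{s-t}$. Since $s-t>\alpha$, the definition of the upper Besicovitch--Taylor index gives $\sum_n\ell_n^{s-t}<\infty$, and the $T$-invariance of $\mu$ together with the convergent Borel--Cantelli lemma forces $T^n x\notin (G_i)_{\ell_n}$ for all sufficiently large $n$ for $\mu$-a.e.\ $x$. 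Because any $y\in E(x)\cap G_i$ requires $T^n x\in (G_i)_{\ell_n}$ infinitely often, a countable intersection over $i$ gives $E(x)\cap G=\varnothing$ for $\mu$-a.e.\ $x$.

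For the second assertion, when $\hdim(G)>s-\alpha$, the plan is to reduce to the intersection-dimension theorem announced in the abstract. I pick $\beta\in(s-\alpha,\hdim G)$ and invoke Frostman's lemma for analytic sets to obtain a compact $F\subseteq G$ carrying a Borel probability measure $\nu$ with $\nu(B(y,r))\le C\,r^\beta$ for all $y$ and $r$. Then $F$ is regular in the Frostman sense required by the main theorem, which yields the lower bound $\hdim(E(x)\cap F)\ge \hdim F+\alpha-s\ge \beta+\alpha-s>0$ for $\mu$-a.e.\ $x$, and in particular $E(x)\cap G\supseteq E(x)\cap F\ne\varnothing$. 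For the last assertion, condition~(C) is designed to produce, inside any analytic $G$ with $\dim_{\rm P}(G)>s-\alpha$, a closed subset carrying a Frostman-type measure with exponent exceeding $s-\alpha$; once this reduction is in place, the argument of the previous paragraph applies verbatim.

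The hard part is the intersection-dimension theorem used in the second step, whose non-emptiness content cannot be obtained by a naive first-moment or Fubini argument. Indeed, for any single target $y$ the exponentially mixing Borel--Cantelli lemma gives $\mu\{x:T^n x\in B(y,\ell_n)\ \mathrm{i.o.}\}=0$, since $\sum_n\mu(B(y,\ell_n))\lesssim\sum_n\ell_n^s<\infty$ whenever $s>\alpha$; consequently $\nu(E(x))=0$ almost surely, and mass balance cannot deliver a point of intersection. The correct route is a Cantor-type construction inside $F\cap E(x)$: at each level one selects a well-separated packing of indices $n$ along which $B(T^n x,\ell_n)$ captures a prescribed $\nu$-mass in $F$, and the exponential-mixing estimate is used to bound the off-diagonal Gál-type sums so that a second-moment argument restores near-independence. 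This construction, performed against the Frostman measure $\nu$ rather than against the Ahlfors measure $\mu$, is where the real work of the proof lies.
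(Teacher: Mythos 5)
Your treatment of the case $\dim_{\rm P}(G)<s-\alpha$ is correct and coincides with the paper's own argument (Tricot's reduction to upper box dimension, a covering count, Ahlfors regularity plus invariance, and the convergence Borel--Cantelli lemma). The case $\dim_{\rm H}(G)>s-\alpha$, however, is not proved. Your primary route --- citing the lower bound $\dim_{\rm H}(E(x)\cap F)\ge \dim_{\rm H}F+\alpha-s$ from the ``main theorem'' --- is circular within this paper: that bound is obtained (Lemma \ref{lemlow}, via fractal percolation) \emph{from} the hypothesis that the covering set almost surely hits every analytic set of Hausdorff dimension exceeding $s-\alpha$, i.e.\ from the very statement at issue. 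Your fallback, a Cantor-type second-moment construction against a Frostman measure, correctly identifies where the work lies but is left entirely unexecuted. The paper's actual argument makes it concrete: pass to a compact $G^{\star}\subset G$ with $\dim_{\rm H}(G^{\star}\cap V)>s-\alpha$ for every open $V$ meeting it (Lemma \ref{hauslem1}); count (call the count $S_k$) the generalized dyadic cubes at level $k$ that meet $G^{\star}\cap V$ and are hit by some $T^{n-1}x$ with $n$ ranging over a $ck$-separated subset $\mathfrak{I}'_k$ of the indices with $\ell_n\in[b^{k-1},b^{k-2})$; prove $\mathbb{E}(S_k)\to\infty$ along the scales where $n_k\ge b^{-k\beta}$ together with ${\rm Var}(S_k)\le\epsilon\,\mathbb{E}(S_k)^2+M\,\mathbb{E}(S_k)$ using exponential mixing (Lemma \ref{hauslem3}); and conclude with Chebyshev, Fatou and Baire category over a countable basis of open sets $V$. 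Until estimates of this kind are supplied, your second step is a plan rather than a proof.

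The third claim is where your approach would actually fail rather than merely remain incomplete. Condition (C) is a hypothesis on the sequence $\{\ell_n\}$ alone (regular behaviour of the counts $n_k$ along a subsequence $k_i$ with $k_{i+1}/k_i\to1$); it says nothing about $G$ and cannot manufacture subsets of $G$. More fundamentally, a closed set carrying a Frostman measure of exponent exceeding $s-\alpha$ necessarily has Hausdorff dimension exceeding $s-\alpha$, whereas the whole point of this part of the theorem is to handle sets with $\dim_{\rm P}(G)>s-\alpha$ but $\dim_{\rm H}(G)$ arbitrarily small (even zero). So the proposed reduction to the Hausdorff-dimension case is impossible in exactly the cases that matter. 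The paper instead uses Joyce--Preiss (Lemma \ref{hauslem1}(2)) to extract a compact $G_{\star}\subset G$ every relatively open piece of which has \emph{upper box} dimension greater than $s-\alpha$, and runs the limsup-random-fractal second-moment argument (Theorems \ref{thm2} and \ref{thm2'}) at the good scales $k_i$ supplied by condition (C), matching the number of hit cubes against box-counting numbers rather than against any Frostman measure.
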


Our Theorem \ref{dc1} is concerned with Question \ref{main2} and measures the size of $E(x)\cap G$.

\begin{theorem}\label{dc1}
Let $E(x)$ be the dynamical covering set as in Theorem \ref{dc3}. For any analytic set $G\subset X$ we have  for $\mu$-almost every $x\in X$
\begin{align*}
            \dim_{\rm H}(E(x)\cap G) \left\{\begin{array}{ll}
            \le \dim_{\rm P}(G)+\alpha-s& \quad \text{if~}\dim_{\rm P}(G)\ge s-\alpha,\\[2ex]
             =-\infty & \quad \text{if~}\dim_{\rm P}(G)< s-\alpha,\\[2ex]
             \ge\dim_{\rm H}(G)+\alpha-s &\quad \text{if~}\dim_{\rm H}(G)>s-\alpha.
          \end{array}\right.
          \end{align*}
Moreover, if $\dim_{\rm P}(G)>s-\alpha$ and the condition (C) holds, then
\[\dim_{\rm P}(E(x)\cap G)=\dim_{\rm P}(G),\quad{\rm a.e.}\]
\end{theorem}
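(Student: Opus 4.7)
The four assertions split into an upper bound on Hausdorff dimension, an emptiness claim, a lower bound on Hausdorff dimension, and a packing-dimension equality; my plan is to treat each separately, with the Hausdorff lower bound being the main obstacle.

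For the upper bound \(\dim_{\rm H}(E(x)\cap G)\le\dim_{\rm P}(G)+\alpha-s\), I would use the representation \(E(x)\cap G=\limsup_n\bigl(B(T^nx,\ell_n)\cap G\bigr)\). Fix \(p>\dim_{\rm P}(G)\) and decompose \(G=\bigcup_i G_i\) with \(\overline{\dim}_{\rm B} G_i<p\). By Ahlfors \(s\)-regularity the \(\ell\)-neighbourhood \(G_i^{(\ell)}\) has \(\mu\)-measure \(\lesssim\ell^{s-p}\). Split the indices dyadically as \(A_k=\{n:\ell_n\asymp 2^{-k}\}\); the upper Besicovitch--Taylor hypothesis gives \(|A_k|\le C_\varepsilon 2^{k(\alpha+\varepsilon)}\). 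A second-moment/Borel--Cantelli argument based on the exponential mixing, in the spirit of \cite{ZL}, shows that for \(\mu\)-almost every \(x\) the number of \(n\in A_k\) with \(B(T^nx,\ell_n)\cap G_i\ne\varnothing\) is \(\lesssim|A_k|\,\mu(G_i^{(2^{-k})})\lesssim 2^{k(\alpha+\varepsilon-s+p)}\) for all large \(k\). Covering \(E(x)\cap G_i\) at scale \(2^{-k}\) by these balls and summing \((2^{-k})^t\) converges whenever \(t>p+\alpha-s+\varepsilon\), yielding \(\dim_{\rm H}(E(x)\cap G_i)\le p+\alpha-s\); countable stability of \(\dim_{\rm H}\) and \(p\downarrow\dim_{\rm P}(G)\) finish this step. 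The emptiness claim then follows immediately: when \(\dim_{\rm P}(G)<s-\alpha\) the bound just obtained is strictly negative, which under the convention \(\dim_{\rm H}(\varnothing)=-\infty\) forces \(E(x)\cap G=\varnothing\).

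The lower bound \(\dim_{\rm H}(E(x)\cap G)\ge\dim_{\rm H}(G)+\alpha-s\) is the heart of the theorem. The plan is a random mass-distribution argument. For \(t<\dim_{\rm H}(G)\) close to \(\dim_{\rm H}(G)\), Frostman's lemma provides a Borel probability \(\nu\) on \(G\) with \(\nu(B(y,r))\le Cr^t\). I would form measures
\[
\mathcal{M}_N(x)=\frac{1}{Z_N}\sum_{n=1}^{N}\ell_n^{\,\alpha-s}\,\mathbf{1}_{B(T^nx,\ell_n)}\,\nu,
\]
with a suitable normalising constant \(Z_N\), and pass to a weak-\(\ast\) subsequential limit \(\mathcal{M}(x)\). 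Combining \(\nu(B(y,r))\le Cr^t\) with \(\mu(B(T^nx,\ell_n))\asymp\ell_n^s\) gives \(\mathbb{E}_\mu[\mathcal{M}_N(B(y,r))]\asymp r^{t+\alpha-s}\) uniformly, and exponentially mixing decorrelation converts the double sum in \(\mathbb{E}_\mu[\mathcal{M}_N(B(y,r))^2]\) into a diagonal-dominated estimate bounded by the square of the first moment. A Paley--Zygmund and Borel--Cantelli argument along a countable dyadic net of balls then produces, \(\mu\)-almost surely, a Frostman-type inequality \(\mathcal{M}(x)(B(y,r))\le C(x)\,r^{t+\alpha-s-\varepsilon}\). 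Since \(\mathcal{M}(x)\) is supported on \(E(x)\cap G\), the mass distribution principle gives \(\dim_{\rm H}(E(x)\cap G)\ge t+\alpha-s-\varepsilon\), and letting \(\varepsilon\downarrow 0\) and \(t\uparrow\dim_{\rm H}(G)\) concludes this part. The main technical obstacle is uniformity of the variance bound across all scales and positions of balls, which forces a careful dyadic decomposition of the index set and an exceptional-set treatment of short-range correlations where the mixing rate is not yet effective.

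For the packing-dimension equality, the inequality \(\dim_{\rm P}(E(x)\cap G)\le\dim_{\rm P}(G)\) is trivial. For the reverse, I would argue that under condition~(C), no compact \(K\subseteq G\setminus E(x)\) can have \(\dim_{\rm P}(K)>s-\alpha\), for otherwise Theorem~\ref{dc3} applied to \(K\) would force \(E(x)\cap K\ne\varnothing\), contradicting \(K\subseteq G\setminus E(x)\). Since \(E(x)\) is Borel, \(G\setminus E(x)\) is analytic when \(G\) is, so Choquet capacitability and countable stability of \(\dim_{\rm P}\) yield \(\dim_{\rm P}(G\setminus E(x))\le s-\alpha\). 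Consequently
\[
\dim_{\rm P}(G)=\max\{\dim_{\rm P}(G\cap E(x)),\,\dim_{\rm P}(G\setminus E(x))\}\le\max\{\dim_{\rm P}(G\cap E(x)),\,s-\alpha\},
\]
and the hypothesis \(\dim_{\rm P}(G)>s-\alpha\) forces \(\dim_{\rm P}(G\cap E(x))=\dim_{\rm P}(G)\), as required.
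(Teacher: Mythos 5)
Your treatment of the upper bound and of the empty-intersection case is sound and essentially the paper's own argument (a first-moment covering estimate at the scales $\ell_n$ plus Borel--Cantelli; the second-moment input you invoke there is not actually needed, since bounding $\mathbb{E}\bigl(\mathcal{H}^{\theta}_{\delta}(E(x)\cap G)\bigr)$ directly already suffices). The two remaining parts, however, each contain a genuine gap.

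For the Hausdorff lower bound, the weak-$\ast$ limit $\mathcal{M}(x)$ of $\mathcal{M}_N(x)=Z_N^{-1}\sum_{n\le N}\ell_n^{\alpha-s}\mathbf{1}_{B(T^nx,\ell_n)}\nu$ is \emph{not} supported on $E(x)\cap G$. Its support is contained in $\bigcap_{m}\overline{\bigcup_{n\ge m}B(T^nx,\ell_n)}\cap\mathrm{supp}\,\nu$, and since $\ell_n\to0$ while the orbit $\{T^nx\}$ is typically dense in $X$ (the measure is Ahlfors regular, hence of full support), that intersection is all of $\mathrm{supp}\,\nu$ rather than the limsup set: a point $y$ can be approximated by centers $T^{n_j}x$ with $d(y,T^{n_j}x)\to0$ more slowly than $\ell_{n_j}$, so it lies in every closure $\overline{\bigcup_{n\ge m}B(T^nx,\ell_n)}$ without lying in $E(x)$. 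This is the same phenomenon as in the Dvoretzky problem, where $N^{-1}\sum_{n\le N}\mathbf{1}_{B_n}\,d\mathcal{L}$ converges to Lebesgue measure no matter how small the limsup set is. Consequently the mass distribution principle cannot be applied to $\mathcal{M}(x)$, and this step fails. Repairing it requires either a nested Cantor-type subconstruction inside $E(x)\cap G$, or the route the paper takes: intersect $G$ with an independent fractal percolation $\Gamma_t$ and deduce the codimension bound $\dim_{\rm H}(E(x)\cap G)\ge\dim_{\rm H}G-(s-\alpha)$ from the hitting statement alone (Theorem \ref{hithaus1} combined with Lemma \ref{lemlow}).

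For the packing-dimension equality there is a quantifier problem. Theorem \ref{dc3} asserts that for each \emph{fixed} analytic set $K$ with $\dim_{\rm P}K>s-\alpha$ the set of $x$ with $E(x)\cap K=\varnothing$ is null, with the exceptional null set depending on $K$. You apply it to a compact $K\subseteq G\setminus E(x)$, i.e.\ to a set chosen \emph{after} $x$; since there are uncountably many candidate compacta, no countable intersection of full-measure sets covers them all, and the contradiction does not follow as stated. The paper avoids this by introducing a second, independent covering set $E'$ with index $\alpha'<\alpha$, proving $E\cap E'\cap G\ne\varnothing$ a.s.\ by a Baire-category argument over a countable basis, and then reading off $\dim_{\rm P}(E\cap G)\ge s-\alpha'$ from the empty-intersection half of Theorem \ref{hithaus1} applied to $E'$ with the (independent) random target $E\cap G$; letting $\alpha'$ decrease to $s-\dim_{\rm P}G$ along rationals gives the claim.
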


\begin{remark}
In Theorem \ref{dc3} and \ref{dc1}, the critical case where $\dim_{\rm P}(G)\ge s-\alpha$ and $\dim_{\rm H}(G)\le s-\alpha$ does not explicitly.
\end{remark}

As an immediate consequence of Theorem \ref{dc1}, we have

\begin{corollary}\label{dc2}
For any regular analytic set $G\subset X$ in the sense that 
$\dim_{\rm H}(G)=\dim_{\rm P}(G)$,  we have for $\mu$-almost every $x\in X$
\begin{align*}
           \dim_{\rm H}(E(x)\cap G) =\left\{\begin{array}{ll}
           \dim_{\rm P}(G)+\alpha-s& \quad \text{if~}\dim_{\rm P}(G)> s-\alpha,\\[2ex]
           -\infty & \quad \text{if~}\dim_{\rm P}(G)< s-\alpha.
           \end{array}\right.
\end{align*}
\end{corollary}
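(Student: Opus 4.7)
The plan is to derive Corollary \ref{dc2} directly from Theorem \ref{dc1} by invoking the regularity hypothesis $\dim_{\rm H} G = \dim_{\rm P} G$ so that the Hausdorff lower bound and the packing upper bound in Theorem \ref{dc1} coincide. No new machinery is needed; the proof reduces to a case split on whether $\dim_{\rm P} G$ lies above or below the threshold $s-\alpha$, carried out on a single $\mu$-full-measure set of $x$.

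First, suppose $\dim_{\rm P} G > s-\alpha$. By regularity, also $\dim_{\rm H} G = \dim_{\rm P} G > s-\alpha$, so the third clause of Theorem \ref{dc1} applies and gives
\[
  \dim_{\rm H}(E(x)\cap G) \ge \dim_{\rm H} G + \alpha - s = \dim_{\rm P} G + \alpha - s
\]
for $\mu$-a.e.\ $x$. Simultaneously, the first clause of Theorem \ref{dc1} (applicable because $\dim_{\rm P} G \ge s-\alpha$) yields the matching upper bound $\dim_{\rm H}(E(x)\cap G) \le \dim_{\rm P} G + \alpha - s$. Intersecting the two full-measure sets of $x$ on which these inequalities hold gives the claimed equality.

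Second, if $\dim_{\rm P} G < s-\alpha$, the second clause of Theorem \ref{dc1} directly yields $\dim_{\rm H}(E(x)\cap G) = -\infty$ for $\mu$-a.e.\ $x$, which under the convention fixed earlier in the paper means $E(x)\cap G = \varnothing$.

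There is no genuine obstacle in the argument: the whole analytic content of the corollary is already packaged in Theorem \ref{dc1}, and the only new ingredient is the trivial observation that $\dim_{\rm H} G = \dim_{\rm P} G$ closes the gap between the Hausdorff lower bound and the packing upper bound. The borderline case $\dim_{\rm P} G = \dim_{\rm H} G = s-\alpha$ is deliberately left outside the statement, in line with the remark following Theorem \ref{dc1}.
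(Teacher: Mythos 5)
Your proposal is correct and matches the paper exactly: the paper states Corollary \ref{dc2} as an immediate consequence of Theorem \ref{dc1}, obtained precisely by combining the packing upper bound and the Hausdorff lower bound (which coincide under the regularity hypothesis $\dim_{\rm H}G=\dim_{\rm P}G$) in the supercritical case, and invoking the $-\infty$ clause in the subcritical case. No further comment is needed.
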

The rest of this article is organized as follows. In Section 2, we describe a more general probabilistic setting and prove results 
on the hitting probabilities of the random covering set. Then Theorems \ref{dc3}--\ref{dc1}  follow from Theorems \ref{hithaus1}--\ref{rc1}.                                                                                                                                                                                                                                                                                                                                                                                                                                                                                                                                                                                                                                                                                                             This probabilistic setting is closely related to the classical Dvoretzky covering problem concerning the set $\limsup\limits_{n\to\infty} 
B(x_n,\ell_n)$, where the centers $\{x_n\}$ are independent and uniformly distributed. J\"arvenp\"a\"a $et~al.$ \cite{JKLSX} studied 
the hitting probability of the Dvoretzky covering set in a general metric space. We extend their results from the independence setting 
to a stationary process which is exponentially mixing, and also the uniform distribution is generalized to Ahlfors regular distribution. 
These results will give the lower and upper bounds for $\dim_{\rm H}(E(x)\cap G)$ in Theorem \ref{hithaus2}. In Section 4, we derive 
Theorem \ref{rc1} by extending the general method of Khoshnevisan, Peres, and Xiao \cite{KPX} to limsup random fractals in metric 
spaces. Moreover, we obtain the packing dimension of $E(x)\cap G$ in Theorem \ref{rc1}. The proofs of our main results in Section 2 
are given in Section 3 and Section 5, respectively. Finally, in Section 6, we provide some examples of dynamical systems that satisfy 
our assumptions so that the main theorems in the paper are applicable to them.

\section{General results for stationary processes}

In this section, we investigate Questions \ref{main1} and \ref{main2} in a general probabilistic setting, which extends the results 
in \cite{YA} and \cite{JKLSX}.

Let $\{\xi_n\}_{n\ge1}$ be a stationary process defined on a probability space $(\Omega,\mathscr{A},\mathbb{P})$ 
and take values in a compact metric space $(X,d)$. Let $\mu$ be the the distribution of $\xi_1$ which is the probability 
measure defined by 
\begin{equation}\label{eqmu}
	\mu(A)=\mathbb{P}(\xi_1\in A)
\end{equation}
for all Borel sets $A \subset X$. 

\begin{definition}\label{exp}
We say that $\{\xi_n\}_{n\ge1}$ is $exponentially~ mixing$ if there exist two constants $C>0$ and $0<\rho<1$
such that  
\[\big|\mathbb{P}(\xi_1\in A|D)-\mathbb{P}(\xi_1\in A)\big|\le C\rho^{n}
\]
for all $n\ge1$, balls $A\subset X$ and $D\in\mathscr{A}^{n+1}$, where $\mathscr{A}^{n+1}$ is the sub-$\sigma$-field 
generated by $\{\xi_{n+i}\}_{i\ge1}$.
\end{definition}

\begin{remark}
If $(X,\mathscr{B},\mu,T,d)$ is an exponentially mixing m.m.p.s., define $\xi_n:=T^{n-1}x$ for every $x\in X$, then the 
process $\{\xi_n\}_{n\ge1}$ is an exponentially mixing process on probability space $(X,\mathscr{B},\mu)$. Hence the 
probabilistic model  described above covers the dynamical case.
\end{remark}

Let $\{\ell_n\}_{n\ge1}$ be a sequence of positive numbers decreasing to zero. For every $n\ge 1$, denote $I_n:= 
B(\xi_n,\ell_n)$. Define 
\[
E:=\limsup_{n\to\infty} I_n=\{y\in X\colon y\in I_n ~\text{i.o.}\}.
\]
The set $E$ is a $random ~covering~ set$ and consists of the points which are covered by $\{I_n\}_{n\ge1}$ infinitely 
often. 

The following theorem is concerned with the hitting probabilities of the random  covering set $E$.

\begin{theorem}\label{hithaus1}
Let $\{\xi_n\}_{n\ge1}$ be an exponentially mixing stationary process taking values in $X$ with probability distribution $\mu$. 
We assume that $\mu$ is Ahlfors $s$-regular with $0<s<\infty$. Let $\a$ be the upper Besicovitch-Taylor index of 
$\{\ell_n\}_{n\ge1}$ with $\alpha<s$. Then for every analytic set $G\subset X$, we have
	\[\mathbb{P}(E\cap G\ne\varnothing)=
	\begin{cases}
	0&  \text{if $\dim_{\rm P}(G)<s-\alpha$,}\\
	1&  \text{if $\dim_{\rm H}(G)>s-\alpha$.}
	\end{cases}\]
\end{theorem}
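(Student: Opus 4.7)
I would split the argument into the null case $\dim_{\rm P}(G)<s-\alpha$ and the full case $\dim_{\rm H}(G)>s-\alpha$.

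For the null case the plan is a first moment Borel--Cantelli argument that does not require the mixing hypothesis. Fix $t$ with $\dim_{\rm P}(G)<t<s-\alpha$. Using the characterization $\dim_{\rm P}(G)=\inf\{\sup_i\overline{\dim}_{\rm B}(G_i):G\subseteq\bigcup_i G_i\}$, I reduce to showing $\mathbb{P}(E\cap G_i\ne\varnothing)=0$ for each $G_i$ with $\overline{\dim}_{\rm B}(G_i)<t$. Since $E\cap G_i\ne\varnothing$ forces $I_n\cap G_i\ne\varnothing$ infinitely often, equivalently $\xi_n$ lies in the $\ell_n$-neighbourhood $G_i^{(\ell_n)}$ i.o., stationarity gives $\mathbb{P}(\xi_n\in G_i^{(\ell_n)})=\mu(G_i^{(\ell_n)})$. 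Covering $G_i$ by at most $\ell_n^{-t-\varepsilon}$ balls of radius $\ell_n$ and applying Ahlfors $s$-regularity yields $\mu(G_i^{(\ell_n)})\le C\ell_n^{s-t-\varepsilon}$. Choosing $\varepsilon$ so small that $s-t-\varepsilon>\alpha$, the definition of the upper Besicovitch--Taylor index gives $\sum_n\ell_n^{s-t-\varepsilon}<\infty$, and first Borel--Cantelli finishes the case.

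For the full case I would first reduce to showing positive probability. Since $E=\limsup_{n\ge k}I_n$ for every $k$, the random closed set $E$ is invariant under the time shift of $(\xi_n)$, so $\{E\cap G\ne\varnothing\}$ is a shift-invariant event, hence of probability $0$ or $1$ by ergodicity (an immediate consequence of exponential mixing). Using regularity of analytic sets I may assume $G$ is compact, and for $t\in(s-\alpha,\dim_{\rm H}(G))$ Frostman's lemma supplies a probability measure $\nu$ on $G$ with $\nu(B(y,r))\le Cr^t$. A naive attempt to produce $\nu(E)>0$ is doomed: because $\alpha<s$ the series $\sum_n\ell_n^s$ converges, and Fubini combined with first Borel--Cantelli forces $\nu(E)=0$ almost surely. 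Thus the point of $E\cap G$ we seek must be $\nu$-negligible, and I must extract it by a geometric construction rather than by weighing.

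The remedy, in the spirit of Khoshnevisan--Peres--Xiao, is a scale-by-scale Cantor construction. Group the indices into dyadic blocks $B_k=\{n:2^{-k-1}<\ell_n\le 2^{-k}\}$; the Besicovitch--Taylor index controls $|B_k|$ so that at scale $2^{-k}$ the expected number of $I_n$ hitting a fixed ball is $\asymp 2^{-k(s-\alpha)}$. I would show: for every ball $Q$ of radius comparable to $2^{-k-1}$ meeting $\mathrm{supp}(\nu)$, and every starting index $N$, there is positive probability (bounded below uniformly in $Q,k,N$) that some $I_n$ with $n\in B_{k'}$, $k'\ge k$, $n\ge N$, both meets $\mathrm{supp}(\nu)\cap Q$ and is contained in a slight enlargement of $Q$. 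Iterating gives a nested sequence $Q_1\supset Q_2\supset\cdots$ with $\mathrm{diam}(Q_k)\to 0$, each covered by some $I_{n_k}$ with $n_k\to\infty$ and each meeting $\mathrm{supp}(\nu)\subseteq G$. By compactness, the intersection $\bigcap_k Q_k$ contains a point which lies in $G$ (approximated by $\mathrm{supp}(\nu)$-points) and in $E$ (since it lies in $I_{n_k}$ for infinitely many $n_k$), producing $E\cap G\ne\varnothing$ with positive probability.

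The main obstacle, and where this proof genuinely goes beyond the independent setting of \cite{JKLSX}, is controlling the correlations between the events defining the inductive step under only the exponential mixing hypothesis. Within a single block $B_k$ the uniform lower bound on the hitting probability rests on a second-moment estimate $\sum_{n,m\in B_k}\mathbb{P}(y\in I_n\cap I_m)$, which must be handled by splitting into near-diagonal terms (bounded via the Frostman/Ahlfors geometry) and off-diagonal terms (bounded via the mixing coefficient $\rho^{|n-m|}$). Across different scales one needs approximate independence of the successive choices, again supplied by mixing provided the time labels $n_k$ are chosen sufficiently separated. Ensuring that the geometric series $\sum\rho^{|n-m|}$ of mixing errors is absorbed into the dominant scale-wise probability $\asymp 2^{-k(s-\alpha)}$ without spoiling the uniform lower bound on the inductive step is the crux of the proof.
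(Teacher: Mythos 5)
Your first-moment Borel--Cantelli argument for the case $\dim_{\rm P}(G)<s-\alpha$ is correct and is essentially the paper's proof: reduce to $\overline{\dim}_{\rm B}<s-\alpha$ via Tricot's characterization of packing dimension, cover $G$ by $\ell_n^{-\eta}$ balls of radius $\ell_n$, use Ahlfors regularity and stationarity to bound $\mathbb{P}(I_n\cap G\ne\varnothing)\lesssim \ell_n^{s-\eta}$, and sum. Your observation that the naive Frostman/Fubini route to the second half is doomed because $\sum_n\ell_n^s<\infty$ forces $\nu(E)=0$ is also correct and is precisely why a covering-count argument is needed.

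For the case $\dim_{\rm H}(G)>s-\alpha$, however, your architecture has two genuine gaps. First, the zero--one law you invoke is not available under the paper's hypotheses. The event $\{E\cap G\ne\varnothing\}$ is (essentially) a tail event, but the mixing condition of Definition 2.1 only controls $\mathbb{P}(\xi_1\in A\mid D)$ for a \emph{single coordinate} and a \emph{ball} $A$ against the future $\sigma$-field; it says nothing about multi-coordinate cylinder events, so triviality of the tail or invariant $\sigma$-field does not follow by the usual $\pi$--$\lambda$ argument, and ergodicity of the shift is not ``immediate.'' Second, the iterative Cantor construction requires, at each stage, conditioning a future configuration on the complicated past event ``stages $1,\dots,j$ succeeded.'' The hypothesis bounds the dependence of an early ball event on the future with an \emph{additive} error $C\rho^n$; transferring this to a bound on $\mathbb{P}(\text{future}\mid\text{past})$ divides that error by the (possibly tiny) probability of the past event, so the claimed approximate independence of successive stages is not supplied by the assumptions. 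Moreover, a single nested chain whose steps succeed with conditional probability merely bounded below by some $p<1$ survives only with probability $\prod_j p_j$, which need not be positive; one needs either a branching structure or, as below, no chaining at all.

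The paper's proof is engineered to avoid both issues. Using Lemma 3.3 one passes to a compact $G^\star\subset G$ with $\dim_{\rm H}(G^\star\cap V)>s-\alpha$ for every basic open $V$ meeting it. For each such $V$ one counts $S_k=\sum_{Q}X_Q$ over the generalized dyadic cubes $Q$ at level $k$ meeting $G^\star\cap V$, where $X_Q$ indicates that some $\xi_n\in B_Q$ with $n$ in a \emph{sparsified} index set $\mathfrak{I}'_k\subset\{n:\ell_n\in[b^{k-1},b^{k-2})\}$ whose elements are mutually $\ge ck$ apart; this sparsification (costing only a polynomial factor $ck$ in the count) ensures every off-diagonal pair is controlled by the mixing coefficient, replacing your near-diagonal/off-diagonal split. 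Only covariances of pairs of single-coordinate ball events are ever needed. Chebyshev then gives $\mathbb{P}(S_k=0)\le\epsilon+M_0'/\mathbb{E}(S_k)$ with $\mathbb{E}(S_k)\to\infty$ along a subsequence, so $\mathbb{P}(S_k>0)\to1$, Fatou yields $\mathbb{P}(U(\xi_n,\ell_n)\cap G^\star\cap V\ne\varnothing \text{ i.o.})=1$ for each basic $V$, and the Baire category theorem applied to the a.s. dense relatively open sets $G^\star\cap\bigcup_{n\ge k}U(\xi_n,\ell_n)$ produces a point of $E\cap G^\star$ almost surely --- giving probability one directly, with no zero--one law and no cross-scale conditioning. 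You should restructure your argument along these lines.
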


The theorem below provides an estimate on the Hausdorff dimension of the intersection $E\cap G$.
\begin{theorem}\label{hithaus2}
	Under the setting of Theorem \ref{hithaus1}, for every analytic set $G\subset X$, with probability one we have  
	 \begin{align*}
            \dim_{\rm H}(E\cap G) \left\{\begin{array}{ll}
            \le \dim_{\rm P}(G)+\alpha-s& \quad \text{if~}\dim_{\rm P}(G)\ge s-\alpha,\\[2ex]
             =-\infty & \quad \text{if~}\dim_{\rm P}(G)< s-\alpha,\\[2ex]
             \ge\dim_{\rm H}(G)+\alpha-s &\quad \text{if~}\dim_{\rm H}(G)>s-\alpha.
          \end{array}\right.
          \end{align*}
\end{theorem}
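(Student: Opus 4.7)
The statement splits into three regimes depending on how $\dim_{\rm P}(G)$ and $\dim_{\rm H}(G)$ compare to $s-\alpha$. The middle clause is immediate from Theorem \ref{hithaus1}: when $\dim_{\rm P}(G)<s-\alpha$, that result gives $E\cap G=\varnothing$ almost surely, and by the convention $\dim_{\rm H}(\varnothing)=-\infty$ the claim follows. The real content is the two dimension inequalities.

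For the upper bound, the plan is a direct first-moment argument. Fix $q>\dim_{\rm P}(G)$ and $t>\alpha$. By the definition of packing dimension and the countable stability of Hausdorff dimension, I may assume $\overline{\dim}_{\rm B}(G)\le q$, so that for every sufficiently small $r$ the $r$-neighbourhood $G^{(r)}$ is covered by at most $C r^{-q}$ balls of radius $r$. Combined with the Ahlfors $s$-regularity of $\mu$, this yields
\[
\mathbb{P}(I_n\cap G\ne\varnothing)\le \mu\bigl(G^{(\ell_n)}\bigr)\le C\,\ell_n^{s-q}.
\]
Setting $u=q+t-s$ and using that $\{I_n\}_{n\ge N}$ is a natural cover of $E$, a Fatou-type estimate gives
\[
\mathbb{E}\bigl[\mathcal{H}^u(E\cap G)\bigr]\le C\sum_{n\ge N}\mathbb{P}(I_n\cap G\ne\varnothing)\,(2\ell_n)^u\le C\sum_{n\ge N}\ell_n^{u+s-q}=C\sum_{n\ge N}\ell_n^{t}.
\]
Since $t>\alpha$ and $\alpha$ is the upper Besicovitch--Taylor index, the tail sum tends to $0$ as $N\to\infty$. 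Hence $\mathcal{H}^u(E\cap G)=0$ almost surely; letting $t\downarrow\alpha$ and $q\downarrow\dim_{\rm P}(G)$ produces the bound $\dim_{\rm H}(E\cap G)\le \dim_{\rm P}(G)+\alpha-s$.

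For the lower bound, assume $\dim_{\rm H}(G)>s-\alpha$ and fix $q$ with $s-\alpha<q<\dim_{\rm H}(G)$. Let $\sigma$ be a Frostman probability measure supported on $G$ satisfying $\sigma(B(x,r))\le C r^q$. The plan is to build a random Borel measure $\nu$ carried by $E\cap G$ whose $u$-energy $\iint|x-y|^{-u}\,d\nu(x)d\nu(y)$ is finite for every $u<q+\alpha-s$; Frostman's lemma then forces $\dim_{\rm H}(E\cap G)\ge u$. A natural construction is to form, at dyadic scale $k$, the partial mass
\[
\nu_k=\sum_{n\,:\,2^{-k-1}<\ell_n\le 2^{-k}}\frac{\sigma\vert_{I_n}}{\mathbb{E}[\sigma(I_n)]},
\]
normalise, and pass to a weak subsequential limit. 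By Fubini and Ahlfors $s$-regularity, $\mathbb{E}[\sigma(I_n)]=\int\mu(B(y,\ell_n))\,d\sigma(y)\asymp\ell_n^{s}$, so the first moment of $\nu_k(X)$ is bounded below uniformly. A second-moment estimate, in which the exponential mixing property (Definition \ref{exp}) replaces the independence used in \cite{JKLSX}, is then used to show that $\nu_k(X)$ does not concentrate on a null event and that a weak limit $\nu$ exists on an event of positive probability with $\nu(E\cap G)>0$. The $u$-energy is bounded by splitting the resulting double sum into diagonal terms (controlled by the Frostman bound on $\sigma$) and off-diagonal terms (where exponential mixing decouples distant indices, leaving a tail comparable to $\sum_n\ell_n^{t}$ with $t$ slightly above $\alpha$). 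A zero--one argument based on the tail nature of $E$, analogous to the one underlying Theorem \ref{hithaus1}, promotes the positive-probability conclusion to an almost sure one.

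The main technical obstacle I anticipate is the second-moment bound in the lower half when $q$ approaches its threshold $s-\alpha$: the exponential mixing constant $\rho$ must be played off against both the density of the radii $\ell_n$ near the critical Besicovitch--Taylor exponent and the Frostman decay of $\sigma$. A dyadic block decomposition, in which between-block cross terms are absorbed by the factor $\rho^n$ and within-block terms are handled by Cauchy--Schwarz together with the bound $\sigma(B(x,r))\le Cr^q$, appears to be the right device, but ensuring uniform constants as $q\downarrow s-\alpha$ and $t\downarrow \alpha$ will require careful book-keeping.
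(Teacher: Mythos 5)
Your treatment of the first two clauses matches the paper's: the $-\infty$ case is read off from Theorem \ref{hithaus1}, and the upper bound is the same first-moment covering argument at scales $\ell_n$ (the paper covers $G$ by $\mathcal{N}_{\ell_n}(G)$ balls and bounds $\mathbb{E}\bigl(\mathcal{H}^{\theta}_{\delta}(E\cap G)\bigr)$ exactly as you do, reducing $\dim_{\rm P}$ to $\overline{\dim}_{\rm B}$ by countable stability). For the lower bound, however, you propose a genuinely different route (Frostman measure on $G$, restricted to the $I_n$, weak limits, energy estimates, then a zero--one law), whereas the paper deduces it from the a.s.\ hitting statement of Theorem \ref{hithaus1} by intersecting with an independent fractal percolation $\Gamma_t$ and countably many i.i.d.\ copies $\widehat{\Gamma}_t$ (Lemma \ref{lemlow}, via Lemma \ref{perco}): if $A$ a.s.\ hits every analytic set of dimension $>\gamma$, then $\dim_{\rm H}(A\cap G)\ge\dim_{\rm H}(G)-\gamma$ a.s. That codimension argument buys the almost-sure conclusion for free and avoids all second-moment computations beyond those already done for Theorem \ref{hithaus1}.

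As written, your lower-bound plan has two genuine gaps. First, the covariance control is misallocated. Within a fixed dyadic block $\mathfrak{I}_k=\{n:\ell_n\in[b^{k-1},b^{k-2})\}$ there are $n_k\approx b^{-k\alpha}$ indices, so $n_k^2$ off-diagonal pairs; for a pair $n\ne m$ Cauchy--Schwarz together with the Frostman bound only gives $\mathbb{E}[\sigma(I_n)\sigma(I_m)]\lesssim \ell_n^{s+q}$, i.e.\ a ratio $\ell_n^{q-s}\to\infty$ against $\mathbb{E}[\sigma(I_n)]\mathbb{E}[\sigma(I_m)]\asymp\ell_n^{2s}$. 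Handling \emph{all} within-block pairs this way makes the normalized second moment blow up like $b^{-k(s-q)}$. Cauchy--Schwarz is only affordable for the $O(k\,n_k)$ pairs with $|n-m|\lesssim k$; for the remaining well-separated within-block pairs you must invoke the mixing bound, and the error $C\rho^{|n-m|}$ is useless unless $|n-m|\ge ck$ with $c>\frac{s-\alpha}{\log_b\rho}$. This is precisely why the paper thins $\mathfrak{I}_k$ to the $ck$-separated subset $\mathfrak{I}'_k$ (see Remark \ref{pro} and Lemma \ref{hauslem3}); your sketch needs the same device or the explicit near/far split. Second, the promotion from positive probability to probability one is not available by a ``zero--one argument based on the tail nature of $E$'': the mixing hypothesis of Definition \ref{exp} only controls single-ball events for $\xi_1$ against the future $\sigma$-field $\mathscr{A}^{n+1}$ and does not give triviality of the tail field, so no Kolmogorov-type law applies; and the paper's own probability-one conclusion in Theorem \ref{hithaus1} comes not from a zero--one law but from driving the Paley--Zygmund bound to $1$ (via $\epsilon\to0$) combined with Baire category --- a mechanism that yields nonemptiness, not a dimension lower bound. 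Without the percolation step (or some substitute), your energy method would only deliver $\dim_{\rm H}(E\cap G)\ge \dim_{\rm H}(G)+\alpha-s$ with positive probability.
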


The following corollary is an immediate consequence of Theorem \ref{hithaus2}.
\begin{corollary}\label{hithaus3}
Under the setting of Theorem \ref{hithaus1}, for any analytic set $G\subset X$ with $\dim_{\rm H}(G)=\dim_{\rm P}(G)=\gamma$ we have
\begin{align*}
           \dim_{\rm H}(E\cap G) =\left\{\begin{array}{ll}
           \gamma+\alpha-s& \quad \text{if~}\gamma> s-\alpha,\\[2ex]
           -\infty & \quad \text{if~}\gamma< s-\alpha.
           \end{array}\right.
\end{align*}
\end{corollary}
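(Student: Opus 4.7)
The plan is to obtain Corollary \ref{hithaus3} by combining the matching upper and lower bounds already furnished by Theorem \ref{hithaus2}, exploiting the assumption $\dim_{\rm H}(G) = \dim_{\rm P}(G) = \gamma$ to collapse the three-case estimate into a single value.

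First I would treat the supercritical case $\gamma > s - \alpha$. Since $\dim_{\rm P}(G) = \gamma \ge s - \alpha$, the first clause of Theorem \ref{hithaus2} yields
\[
\dim_{\rm H}(E\cap G) \le \dim_{\rm P}(G) + \alpha - s = \gamma + \alpha - s
\]
on an event of probability one. Because $\dim_{\rm H}(G) = \gamma > s - \alpha$, the third clause yields
\[
\dim_{\rm H}(E\cap G) \ge \dim_{\rm H}(G) + \alpha - s = \gamma + \alpha - s
\]
on another event of probability one. Intersecting these two full-probability events produces the equality $\dim_{\rm H}(E\cap G) = \gamma + \alpha - s$ almost surely.

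Next I would handle the subcritical case $\gamma < s - \alpha$. Here $\dim_{\rm P}(G) = \gamma < s - \alpha$, so the second clause of Theorem \ref{hithaus2} directly delivers $\dim_{\rm H}(E\cap G) = -\infty$ almost surely; this is consistent with (and in fact a restatement of) the emptiness conclusion $E\cap G = \varnothing$ a.s.\ already recorded in Theorem \ref{hithaus1}, invoking the convention $\dim_{\rm H}(\varnothing) = -\infty$ adopted in the paper.

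There is essentially no obstacle at the corollary level; the entire difficulty lies in the earlier Theorems \ref{hithaus1} and \ref{hithaus2}, where the upper bound must be obtained via a covering/packing-dimension argument and the lower bound via a Frostman-type energy estimate for the random covering set $E$ built from the exponentially mixing stationary process. Once those are in place, the hypothesis $\dim_{\rm H}(G) = \dim_{\rm P}(G)$ is precisely what is needed to close the sandwich in the supercritical regime, and the packing-dimension criterion of Theorem \ref{hithaus1} disposes of the subcritical regime.
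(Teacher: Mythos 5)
Your proposal is correct and matches the paper's intent exactly: the paper states Corollary \ref{hithaus3} as an immediate consequence of Theorem \ref{hithaus2}, obtained by intersecting the almost-sure upper bound (first clause) with the almost-sure lower bound (third clause) when $\gamma>s-\alpha$, and invoking the second clause when $\gamma<s-\alpha$. No further comment is needed.
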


In general, in Theorem \ref{hithaus1}, $\mathbb{P}(E\cap G\ne\varnothing)=1$ may not hold if $\dim_{\rm H}G>s-\a$ is 
replaced by the weaker condition $\dim_{\rm P}G>s-\a$. A counter-example was given by Li and Suomala \cite{LS} 
when $\{\xi_n\}$ is a sequence of independent and uniformly distributed random variables on the circle. Therefore in 
the result below, we will make use of the following condition on the sequence $\{\ell_n\}$:

(C)\ Let $b\in(0,\frac{1}{3})$ be a constant. There exists an increasing sequence of positive integers $\{k_i\}$ such that
$k_i \to \infty$ as $i \to \infty$, 
\begin{equation}\label{ki}
	\lim_{i\to\infty}\frac{k_{i+1}}{k_i}=1
\end{equation}
and
\begin{equation}\label{nki}
	\lim_{i\to\infty}\frac{\log_{b^{-1}}n_{k_i}}{k_i}=\alpha,
\end{equation}
where 
\[n_k=\#\{n\ge1\colon \ell_n\in[b^{k-1},b^{k-2})\}.\]
\begin{remark}
By Li, Shieh and Xiao \cite{LSX}, the upper Besicovitch-Taylor index $\a$ of $\{\ell_n\}_{n\ge 1}$ can be expressed as  
\begin{equation}\label{alimsup}
\alpha=\limsup_{k\to\infty}\frac{\log_{b^{-1}}n_k}{k}.
\end{equation}
\end{remark}

Under Condition (C), we are able to improve Theorem \ref{hithaus1} by showing that the hitting probability of the random 
covering set $E$ with an arbitrary analytic set $G \subseteq X$ is determined by the packing dimension of $G$. 
\begin{theorem}\label{rc1}
	Under the setting of Theorem \ref{hithaus1}, if the condition (C) holds, then for every analytic set $G\subset X$ with 
	$\dim_{\rm P}(G)>s-\alpha$, we have
	\[\mathbb{P}(E\cap G\ne\varnothing)=1.\]
Moreover, if $\dim_{\rm P}(G)>s-\alpha$, then $\dim_{\rm P}(E\cap G)=\dim_{\rm P}(G)$ {\rm a.s.}	
\end{theorem}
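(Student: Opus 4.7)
The plan is to realize $E$ as a limsup random fractal in the sense of Khoshnevisan--Peres--Xiao \cite{KPX}, and to push their framework through with independence of the covering balls replaced by exponential mixing of the stationary process $\{\xi_n\}$ and uniform measure replaced by the Ahlfors $s$-regular measure $\mu$. Fix $b \in (0,1/3)$ and the subsequence $\{k_i\}$ from Condition (C). For each $k$ set
\[
\mathcal{N}_k = \{n \geq 1 : \ell_n \in [b^{k-1},b^{k-2})\}, \qquad n_k = \#\mathcal{N}_k, \qquad F_k = \bigcup_{n\in\mathcal{N}_k} B(\xi_n,\ell_n).
\]
Then $E \supseteq \limsup_{i} F_{k_i}$, each ball in $F_{k_i}$ has radius comparable to $b^{k_i}$, and by \eqref{nki} we have $n_{k_i} \asymp b^{-k_i\alpha}$.

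The first key step is a hitting-probability estimate at a single scale. For a ball $B(y,r)$ with $r\geq b^{k_i-2}$ and $Z_i := \#\{n\in\mathcal{N}_{k_i} : B(\xi_n,\ell_n)\cap B(y,r)\neq\varnothing\}$, the event $\{\xi_n\in B(y,r+\ell_n)\}$ has $\mu$-probability $\asymp r^s$ by Ahlfors regularity, so $\mathbb{E}[Z_i] \asymp n_{k_i}r^s$. Expanding $\mathbb{E}[Z_i^2]$ and using exponential mixing (splitting pairs by $|n-m|\leq N_i$ or $>N_i$ where $N_i$ is a slowly growing cutoff) gives $\mathbb{E}[Z_i^2] \lesssim n_{k_i}r^s + n_{k_i}^2 r^{2s}$. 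A Paley--Zygmund inequality then yields
\[
\mathbb{P}\big(F_{k_i}\cap B(y,r)\neq\varnothing\big) \;\gtrsim\; \min\!\big(1,\, n_{k_i}r^s\big) \;\asymp\; \min\!\big(1,\, r^s b^{-k_i\alpha}\big).
\]
Taking $r \asymp b^{k_i}$ we obtain the fundamental estimate $\mathbb{P}(F_{k_i}\cap B(y,r)\neq\varnothing) \gtrsim r^{s-\alpha}$, identifying $s-\alpha$ as the effective codimension of the limsup.

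The second step is to reduce to a subset $K\subseteq G$ with uniform box dimension. Since $\dim_{\rm P}(G) > s-\alpha$, standard results on packing dimension (via Joyce--Preiss type decompositions) yield, for any $\gamma\in (s-\alpha,\dim_{\rm P}(G))$, a compact $K\subseteq G$ with $\overline{\dim}_{\rm B}(K\cap U)\geq \gamma$ for every open $U$ meeting $K$. Along the subsequence $\{k_i\}$, the distinct scales involve disjoint index sets $\mathcal{N}_{k_i}$, and because $k_{i+1}/k_i\to 1$ by \eqref{ki}, I can pass to a further sparse subsequence along which exponential mixing makes $\{F_{k_i}\}$ asymptotically independent to arbitrary precision. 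The box-dimension property of $K$ supplies, at each scale $b^{k_i}$, at least $b^{-k_i\gamma}$ disjoint balls of radius $b^{k_i}$ intersecting $K$. Each such ball meets $F_{k_i}$ with probability $\gtrsim b^{k_i(s-\alpha)}$, so the expected number of $K$-balls hit is at least $b^{-k_i(\gamma-(s-\alpha))}\to\infty$. A second-moment / Borel--Cantelli argument along the mixing-sparsified subsequence then forces $E\cap K \neq\varnothing$ almost surely, and the same estimate applied to arbitrary sub-balls shows $E\cap K$ is dense in $K$. Combined with the uniform box-dimension of $K$ this gives $\dim_{\rm P}(E\cap G)\geq \dim_{\rm P}(E\cap K)\geq\gamma$. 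Letting $\gamma\uparrow\dim_{\rm P}(G)$ and using the trivial bound $\dim_{\rm P}(E\cap G)\leq\dim_{\rm P}(G)$ yields the equality, and the first assertion $\mathbb{P}(E\cap G\neq\varnothing)=1$ is an immediate corollary.

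The main obstacle will be the transition from independence to exponential mixing: quantifying the error in both the within-scale second moment bound on $Z_i$ and the across-scale asymptotic independence of $\{F_{k_i}\}$, in a way that is uniform over the balls indexed by $K$ and does not deteriorate faster than the dimensional gap $\gamma-(s-\alpha)$. Handling this uniformly requires choosing the sparsification of $\{k_i\}$ carefully, exploiting \eqref{ki} so that consecutive scales differ by a factor of $b$ up to lower-order terms, while the exponential decay $C\rho^n$ in the mixing condition dominates the polynomial counts $n_{k_i}^2$ arising in the second moment; this is the technical heart of the argument and where the metric-space extension of the KPX template must be carried out with care.
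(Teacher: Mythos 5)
Your single-scale second-moment estimate and the reduction to a compact $K\subseteq G$ with uniform lower box dimension are exactly the paper's route (the paper thins each block $\mathfrak{I}_k$ to a $ck$-separated subset $\mathfrak{I}'_k$ rather than splitting pairs by a cutoff $N_i$, but both devices cost only a subexponential factor and serve the same purpose). The one place where your plan for the hitting probability goes astray is the reliance on ``across-scale asymptotic independence'' of $\{F_{k_i}\}$: this is both unnecessary and hard to extract from the stated hypotheses. It is unnecessary because the within-scale second-moment bound already gives $\mathbb{P}(F_{k_i}\cap K\cap U\neq\varnothing)\to 1$ along the subsequence for every open $U$ meeting $K$, so Fatou's lemma alone yields probability one for the ``infinitely often'' event; running $U$ over a countable basis and applying Baire's category theorem to the open dense sets $\bigcup_{k\ge n}F_k\cap K$ then gives $E\cap K\neq\varnothing$ a.s. No independence between scales is used anywhere in the paper or in \cite{KPX}. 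It is hard to justify because Definition \ref{exp} only decorrelates a single ball event for one $\xi_m$ from the future $\sigma$-field, whereas $\{F_{k_i}\cap B\neq\varnothing\}$ is a union over a whole block of consecutive indices; upgrading the one-step bound to genuine asymptotic independence of such block events requires an iteration whose accumulated errors you have not controlled. Drop that ingredient and replace the Borel--Cantelli step by Fatou plus Baire, and the first assertion is fine.

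For the packing dimension you take a genuinely different route from the paper. The paper introduces an auxiliary independent covering set $E'$ with index $\alpha'<\alpha$, shows $E\cap E'\cap G_\star\neq\varnothing$ a.s.\ by intersecting two dense $G_\delta$'s via Baire, and then reads off $\dim_{\rm P}(E\cap G)\ge s-\alpha'$ from the \emph{zero} part of Theorem \ref{hithaus1} applied to $E'$ with target set $E\cap G$; letting $\alpha'\downarrow s-\dim_{\rm P}(G)$ finishes. Your alternative --- $E\cap K$ is dense in a compact $K$ all of whose relative open pieces have upper box dimension greater than $\gamma$, hence $\dim_{\rm P}(E\cap K)\ge\gamma$ --- is more economical but has a gap as written: density alone does not bound packing dimension from below (a countable dense subset of $K$ has packing dimension $0$). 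You need that $E\cap K$ contains a \emph{dense $G_\delta$} subset of $K$; then for any countable cover $\{A_i\}$ of $E\cap K$, Baire forces some closure $\overline{A_i}$ to contain a relatively open piece $K\cap U$, whence $\overline{\dim}_{\rm B}(A_i)\ge\overline{\dim}_{\rm B}(K\cap U)>\gamma$. The $G_\delta$ structure is available because the limsup of the open sets $\bigcup_{n}U(\xi_n,\ell_n)$ is a countable intersection of open sets and your Baire argument shows it is dense in $K$; once you state this explicitly, your packing-dimension argument closes and bypasses the paper's auxiliary $E'$ entirely.
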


We end this section with some remarks about studies of random covering sets. The random covering problem goes back 
to 1897 when Borel investigated questions related to random placement of circular arcs in the unit circle \cite{Borel}. Let 
$\xi=\{\xi_n\}$ be a sequence of independent and uniformly distributed random variables on the circle $\mathbb{T}$ 
and $\{\ell_n\}$ be a sequence of positive numbers decreasing to 0, define the random covering set
$$E(\xi)=\limsup_{n\to\infty}B(\xi_n,\ell_n).$$
In 1956, Dvoretzky \cite{Dvor56} called the attention on the study of $E(\xi)$. He asked the question when $E(\xi)=\mathbb{T}$ 
a.s. or not. In 1971, Shepp~\cite{Shepp72} gave a sufficient and necessary condition: $E(\xi)=\mathbb{T}$ a.s. if and only if 
$\sum_{n=1}^{\infty}\frac{1}{n^2}\exp(\ell_1 +\dots+\ell_n)=\infty$. For the high dimensional case, the Dvoretzky covering problem 
for balls is still open (more details see \cite{Kah00})

The Hausdorff dimensions of random covering sets were firstly investigated by Fan and Wu \cite{FW}. J\"arvenp\"a\"a $et~al.$ \cite{JJKLS} 
introduced the self-affine covering sets and obtained the dimension formula in terms of the singular value function, which was generalized to any 
Lebesgue measurable set covering by Feng $et~al.$ \cite{FJS}.

The random covering problem is related to many other fields such as number theory. For example, in 2017, Haynes and Koivusalo \cite{HK17} 
used a covering argument in Dvoretzky \cite{Dvor56} to prove the randomized version of the Littlewood Conjecture.



\section{Proofs of Theorem \ref{hithaus1} and Theorem \ref{hithaus2} }

\subsection{A nesting family in a metric space}
\ 
\newline
\\
\indent 
We start this section by recalling Theorem 2.1 of K\"aenm\"aki, Rajala and Suomala \cite{Krs}, which provides a nesting family 
of ``cubes"  in a metric space with the finite doubling property [i.e., every ball $B(x, 2r) \subset X$ may be covered by finitely many balls of radius $r$.]  This family  
shares most of the good properties of dyadic cubes of Euclidean spaces. 

\begin{theorem}[\cite{Krs}]\label{thm:nest}
Let $(X,d)$ be a metric space with the finite doubling property and let $0<b<\frac{1}{3}$ be a constant. Then there exists a collection 
$\{Q_{k,i}\colon k\in\mathbb{Z}, i\in\mathbb{N}_k\subset \mathbb{N}\}$ of Borel sets that have the following properties:
\begin{enumerate}
	\item $X=\bigcup_{i\in\mathbb{N}_k}Q_{k,i}$ for every $k\in\mathbb{Z}$.
		\item $Q_{k,i}\cap Q_{m,j}=\varnothing $ or $Q_{k,i}\subset Q_{m,j}$, where $k,m\in \mathbb{Z}$, $k\ge m$, 
		$i\in \mathbb{N}_k $ and $j\in \mathbb{N}_m$.
		\item For every $k\in \mathbb{Z}$ and $i\in\mathbb{N}_k$, there exists a point $x_{k,i}\in X$ such that 
		\begin{equation}\label{inou}
		U(x_{k,i},c_2b^k)\subset Q_{k,i}\subset B(x_{k,i},c_2'b^k),
\end{equation}
where $c_2=\frac{1}{2}-\frac{b}{1-b}$, $c'_2=\frac{1}{1-b}$ and $U(x_{k,i},c_2b^k)$ is the open ball with center $x_{k,i}$ and 
radius $c_2b^k$. 
\item There exists a point $x_0\in X$ so that for every $k\in\mathbb{Z}$, there is an index $i\in\mathbb{N}_k$ with 
$U(x_0,c_2b^k)\subset Q_{k,i}$.
\item $\{x_{k,i}\colon i\in \mathbb{N}_k\}\subset \{x_{k+1,i}\colon i\in \mathbb{N}_{k+1}\} $ for all $k\in\mathbb{Z}$.
	\end{enumerate}
\end{theorem}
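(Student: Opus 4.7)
The plan is to follow the standard construction of a ``dyadic-like'' family of cubes in a doubling metric space, in the spirit of Christ and David, adapted to the scale parameter $b \in (0,\tfrac{1}{3})$. The starting point is a hierarchy of nets: for each $k \in \mathbb{Z}$ I would use the finite doubling property and Zorn's lemma (or a greedy transfinite construction) to choose a maximal $b^k$-separated subset $N_k = \{x_{k,i} : i \in \mathbb{N}_k\} \subset X$, done in a way that $N_k \subset N_{k+1}$. The doubling property guarantees that each $N_k$ is at most countable and locally finite in any bounded set, so the index set $\mathbb{N}_k$ can be taken to be a subset of $\mathbb{N}$. This immediately yields property (5). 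Property (4) is obtained by fixing a distinguished base point $x_0 \in X$ and, when building the nets, always inserting $x_0$ into each $N_k$; then $x_0 = x_{k, i(k)}$ for some index and the open ball $U(x_0, c_2 b^k)$ will lie inside that cube once the cubes are constructed.

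Next I would define the cubes hierarchically, from coarse to fine. Introduce a parent map $\pi_k \colon N_{k+1} \to N_k$ sending each finer-scale net point to the closest coarser-scale net point (with an arbitrary but fixed tie-breaking rule on $\mathbb{N}$). Recursively, set
\[
Q_{k,i} \;=\; \bigcup_{j \in \pi_k^{-1}(x_{k,i})} Q_{k+1,j},
\]
after defining $Q_{k,i}$ for large $k$ as the Voronoi cell of $x_{k,i}$ in $N_k$, intersected with $X$ and with all ties broken by the fixed ordering. A limiting argument passes from finite stages to the whole hierarchy, using completeness of the underlying space. By construction the union over $i \in \mathbb{N}_k$ is all of $X$ (property (1)) and different cubes at the same scale are disjoint while inclusions between different scales match (property (2)).

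The heart of the proof is the two-sided ball containment (\ref{inou}). For the outer inclusion, if $y \in Q_{k,i}$ then by construction there is a descending chain $x_{k,i} = \pi_k(x_{k+1,j_{1}}) = \pi_k \circ \pi_{k+1}(x_{k+2,j_2}) = \cdots$ with $y$ arbitrarily close to some $x_{k+m,j_m}$. Applying the triangle inequality and the maximality of the nets one obtains $d(x_{k+\ell, j_\ell}, x_{k+\ell+1, j_{\ell+1}}) \le b^{k+\ell}$, so
\[
d(y, x_{k,i}) \;\le\; \sum_{\ell = 0}^{\infty} b^{k+\ell} \;=\; \frac{b^k}{1-b} \;=\; c_2' b^k.
\]
For the inner inclusion, take $y$ with $d(y, x_{k,i}) < c_2 b^k$; I need to show $y$ is never reassigned to a different cube at any finer scale. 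If at scale $k+\ell$ the net point $x_{k+\ell,j}$ were to capture $y$ while sitting in a different coarse cube, then $d(y, x_{k+\ell,j}) < d(y, x_{k+\ell,i'})$ for the genuine descendant $x_{k+\ell,i'}$ of $x_{k,i}$ under $y$. Combining the separation $d(x_{k+\ell, j}, x_{k+\ell, i'}) \ge b^{k+\ell}$ with the cumulative wobble $\sum_{m \ge 1} b^{k+m} = b^{k+1}/(1-b)$ shows that this would force $d(y, x_{k,i}) \ge \tfrac{1}{2} b^k - b^{k+1}/(1-b) = c_2 b^k$, a contradiction.

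The main obstacle will be organizing the construction so that the Voronoi ambiguities (ties, boundary points, the interplay between $\pi_k$ and the geometric closeness of $y$) do not accumulate over infinitely many scales. Concretely, one has to insist that parent assignments are consistent with the natural geometric choice whenever possible, and then exploit $b < \tfrac{1}{3}$ so that the error term $b/(1-b) < \tfrac{1}{2}$ leaves $c_2 > 0$. Once that bookkeeping is in place, property (3) follows from the two geometric series above, properties (1), (2), (4) and (5) are built in, and the theorem is complete. (This is essentially the argument of K\"aenm\"aki, Rajala and Suomala \cite{Krs}, which is what we would cite for the full details.)
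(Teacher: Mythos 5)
The paper gives no proof of this statement at all --- it is quoted verbatim from K\"aenm\"aki--Rajala--Suomala \cite{Krs} --- and your sketch is exactly the standard nested-net construction from that reference: maximal $b^k$-separated nets $N_k\subset N_{k+1}$, a parent map, and the two geometric series yielding the constants $c_2'=\frac{1}{1-b}$ and $c_2=\frac12-\frac{b}{1-b}>0$ precisely because $b<\frac13$. Your outline is correct in substance (the only imprecision is that the fine-to-coarse recursion has no ``largest $k$'' base case and must instead be set up via the ancestor chains or a monotone limit, as you implicitly acknowledge), so deferring the remaining bookkeeping to \cite{Krs} is consistent with what the paper itself does.
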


\begin{remark}\label{Qki}
From the construction of $\{Q_{k,i}\colon k\in\mathbb{Z}, i\in\mathbb{N}_k\subset \mathbb{N}\}$ in \cite{Krs} we see 
that for any $k\in\Z$, $Q_{k,i}\cap Q_{k,j}=\varnothing$ for $i\ne j\in\N_k$. When $\mu$ is Ahlfors $s$-regular, then by (1) and (3) in Theorem 
\ref{thm:nest} and the equalities (\ref{eqah}), we have 
\begin{equation}\label{Eq:count}
c_1^{-1}{c'}_2^{-s}b^{-ks}\le\#\mathbb{N}_k=\#\{Q_{k,i}\colon i\in \mathbb{N}_k\}\le c_1c_2^{-s}b^{-ks}.
\end{equation}
We will use this fact in our proofs of Theorems \ref{hithaus1}, \ref{rc1} and \ref{thm2} below.
\end{remark}

If $(X,d)$ is a compact metric space endowed with an Ahlfors $s$-regular measure $\mu$, then it can be verified that
$X$ has the finite doubling property. Let $b\in(0,\frac{1}{3})$ be the constant in the condition (C) and let 
$\{Q_{k,i}:k\in\mathbb{Z}, i\in\mathbb{N}_k\}$ be the nesting family as in Theorem \ref{thm:nest} which we call 
``generalized dyadic cubes" of $(X,d)$. 
For convenience, we write $\mathcal{Q}_0=\{X\}$ and $\mathcal{Q}_k=\{Q_{k,i}\colon i\in \mathbb{N}_k\}$ for $k\ge1$,  
 and $\mathcal{B}_k=\{B(x_{k,i},c'_2b^k)\colon  i\in \mathbb{N}_k\}$, where $x_{k,i} \ (i \in  \mathbb{N}_k)$ are the points 
 in Part (3) of Theorem \ref{thm:nest}. 


\begin{lem}\label{number}
Let $(X,d)$ be a compact metric space endowed with an Ahlfors $s$-regular measure $\mu$, where $0<s<\infty$ is a constant. 
Then, for any constants $a_0>0$ and $k\ge1$, a ball $B$ of radius ${a_0b^k}$ may intersect at most $\frac{c_1^2(2c_2'+a_0)^s}{c_2^s} $ 
elements in $\mathcal{Q}_k$, where $c_1, c_2$ and $c_2'$ are the constants given in \eqref{eqah} and Theorem \ref{thm:nest}, 
respectively.
\end{lem}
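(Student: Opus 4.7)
The proof should be a standard volume-packing argument combining Ahlfors $s$-regularity of $\mu$ with the inner/outer ball bounds in property (3) of Theorem \ref{thm:nest}. Write $B = B(z, a_0 b^k)$ and let $N$ denote the number of $Q_{k,i} \in \mathcal{Q}_k$ that intersect $B$; the task is to bound $N$.

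First I would enumerate the intersecting cubes as $Q_{k,i_1}, \ldots, Q_{k,i_N}$ and, for each $j$, pick $y_j \in Q_{k,i_j}\cap B$. Using the outer-ball inclusion $Q_{k,i_j}\subset B(x_{k,i_j}, c_2' b^k)$ from \eqref{inou} together with the triangle inequality,
\[
d(z, x_{k,i_j}) \le d(z, y_j) + d(y_j, x_{k,i_j}) \le (a_0 + c_2')\,b^k,
\]
so every center $x_{k,i_j}$ lies in $B(z,(a_0+c_2')b^k)$. Consequently
\[
\bigcup_{j=1}^N U(x_{k,i_j}, c_2 b^k) \;\subset\; B\bigl(z,(a_0+c_2'+c_2)b^k\bigr) \;\subset\; B\bigl(z,(a_0+2c_2')b^k\bigr),
\]
where the last inclusion uses the elementary inequality $c_2 \le c_2'$, which is immediate from the definitions of $c_2,c_2'$ when $b\in(0,1/3)$.

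Next I would exploit disjointness. Since $Q_{k,i}\cap Q_{k,j}=\varnothing$ for $i\ne j$ (Remark \ref{Qki}) and the inner balls satisfy $U(x_{k,i_j},c_2 b^k)\subset Q_{k,i_j}$ by \eqref{inou}, those open inner balls are pairwise disjoint. Applying Ahlfors $s$-regularity to each inner ball and to the enclosing ball gives
\[
N\cdot c_1^{-1} c_2^s b^{ks} \;\le\; \sum_{j=1}^N \mu\bigl(U(x_{k,i_j}, c_2 b^k)\bigr) \;\le\; \mu\bigl(B(z,(a_0+2c_2')b^k)\bigr) \;\le\; c_1 (a_0+2c_2')^s b^{ks},
\]
and rearranging yields $N \le c_1^2(2c_2'+a_0)^s/c_2^s$, as claimed.

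The argument is essentially routine; the only minor subtlety I anticipate is the transfer of the lower bound in \eqref{eqah} from closed to open balls, handled by approximating $U(x,r)$ from inside with closed balls of slightly smaller radius and passing to the limit. A secondary routine point is that the upper bound $\mu(B(z,r))\le c_1 r^s$ continues to hold when $r > \diam X$, since then $B(z,r)\cap X = X$ and the inequality reduces to the same estimate applied at $r=\diam X$.
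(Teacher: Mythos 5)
Your argument is correct and is essentially the same volume-packing proof the paper gives: both enclose the relevant inner balls $U(x_{k,i},c_2b^k)$ in the ball $B(x_B,(2c_2'+a_0)b^k)$, use disjointness of same-level cubes, and compare measures via Ahlfors regularity to get the bound $c_1^2(2c_2'+a_0)^s/c_2^s$. The only cosmetic difference is that you route the inclusion through the cube centers (using $c_2\le c_2'$) while the paper routes it through the union of the cubes themselves; your extra remarks on open versus closed balls and on radii exceeding $\diam X$ are harmless refinements of points the paper leaves implicit.
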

\begin{proof}
Write $B=B(x_B,a_0b^k)$ and
\[\mathcal{A}=\Big\{Q_{k,i}\in\mathcal{Q}_k\colon Q_{k,i}\cap B\ne\varnothing,~i\in\N_k\Big\}.\]
For $Q_{k,i}\in\mathcal{A}$, from Theorem \ref{thm:nest} (3), there exists one point $x_{k,i}\in Q_{k,i}$ so that (\ref{inou}) holds. 
 We denote the collection of such points by $\Upsilon$, and so $\#\mathcal{A}=\#\Upsilon$. Notice that 
\[B\subset \bigcup_{Q_{k,i}\in\mathcal{A}}Q_{k,i}\subset B(x_B,(2c'_2+a_0)b^k),\]
 and
 \[\bigcup_{x_{k,i}\in \Upsilon}U(x_{k,i},c_2b^k)\subset \bigcup_{Q_{k,i}\in\mathcal{A}}Q_{k,i}.\]
 Therefore
\[\sum_{x_{k,i}\in \Upsilon}\mu\Big(U(x_{k,i},c_2b^k)\Big)\le \mu\Big(\bigcup_{Q_{k,i}\in\mathcal{A}}Q_{k,i}\Big)\
le \mu\Big( B(x_B,(2c'_2+a_0)b^k)\Big).\]
Since $\mu$ is Ahlfors $s$-regular, we have
\[\sum_{x_{k,i}\in \Upsilon}\mu(U(x_{k,i},c_2b^k))\ge (\#\mathcal{A})(c_2b^k)^sc_1^{-1},\]
and
\[\mu( B(x_B,(2c'_2+a_0)b^k))\le c_1(2c'_2+a_0)^sb^{ks}.\]
Hence $\#\mathcal{A}\le \frac{c_1^2(2c'_2+a_0)^s}{c_2^s} $. This proves Lemma \ref{number}.
\end{proof}

\subsection{Proofs of Theorem \ref{hithaus1} and Theorem \ref{hithaus2}}
\ 
\newline
\\
\indent
For $k\ge1$, denote by $\mathfrak{I}_k =\{j\ge1\colon  \ell_j\in[b^{k-1},b^{k-2})\}$ and $n_k=\#\mathfrak{I}_k$. 
Given a constant $c>\frac{s-\alpha}{\log_{b} \rho}$, where $\rho$ is the constant in Definition \ref{exp}, 
let $\mathfrak{I}'_k$ be a maximal collection of $\mathfrak{I}_k$ having mutual distances at least $ck$. One important 
property of $\mathfrak{I}'_k$ is that any pair of integers $n, m \in \mathfrak{I}'_k $ are at least of distance $c k$ from 
each other. We will use this fact to prove Theorems \ref{hithaus1} and \ref{rc1}. See Remark \ref{pro} below for more details.

Write $m_k=\#\mathfrak{I}'_k$.  Since the sequence $\{\ell_n\}_{n\ge1}$ is decreasing, the elements in $\mathfrak{I}_k $ are 
consecutive to each other. Then $m_k=\lceil(c k)^{-1} n_k \rceil$, where $\lceil \cdot \rceil$ stands for the ceiling function.

\begin{lemma}\label{hauslem1}
Let $(X,d)$ be a compact metric space, and $G\subset X$ be an analytic set.\\
(1)~If $\dim_{\rm H}G>t$, there is a nonempty compact subset $G^{\star}\subset G$ such that
$$\dim_{\rm H}(G^{\star}\cap V)>t$$ for all open sets $V\subset X$ with $G^{\star}\cap V\ne\varnothing$.\\
(2)~If $\dim_{\rm P}G>t$, there is a nonempty compact subset $G_{\star}\subset G$ such that
 $$\dim_{\rm P}(G_{\star}\cap V)>t$$ for all open sets $V\subset X$ with $G_{\star}\cap V\ne\varnothing$.
\end{lemma}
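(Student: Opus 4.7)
\textbf{Plan for proving Lemma \ref{hauslem1}.} The two parts are parallel, so I describe the Hausdorff case in detail and indicate the modification for packing dimension. The strategy is the standard device of ``removing the thin open parts'' from a compact subset of almost full dimension.

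\emph{Step 1 (Reduction to a compact set).} Since $G$ is analytic and $\dim_{\rm H}G>t$, I would invoke the classical fact that the Hausdorff dimension of an analytic set equals the supremum of Hausdorff dimensions of its compact subsets (proved via Frostman's lemma for analytic sets, cf.\ Mattila's book). Hence one may pick a compact set $K\subset G$ with $\dim_{\rm H}K>t$. For part (2), the corresponding fact for packing dimension of analytic sets (Joyce--Preiss) gives a compact $K\subset G$ with $\dim_{\rm P}K>t$.

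\emph{Step 2 (Deleting the thin open neighbourhoods).} Let $\mathcal{U}$ be the family of open subsets $U\subset X$ such that $\dim_{\rm H}(K\cap U)\le t$, and set $W=\bigcup_{U\in\mathcal U}U$. Because $(X,d)$ is compact and hence separable, Lindel\"of's theorem yields a countable subfamily $\{U_i\}_{i\ge1}\subset\mathcal U$ with $W=\bigcup_i U_i$. Countable stability of Hausdorff dimension then gives
\[
\dim_{\rm H}(K\cap W)=\sup_i \dim_{\rm H}(K\cap U_i)\le t.
\]
Define $G^\star:=K\setminus W$. Since $W$ is open, $G^\star$ is a closed subset of the compact set $K$, hence compact.

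\emph{Step 3 (Verifying the required properties).} Because $\dim_{\rm H}(K\cap W)\le t<\dim_{\rm H}K$, countable stability applied to $K=(K\cap W)\cup G^\star$ forces $\dim_{\rm H}G^\star>t$; in particular $G^\star\neq\varnothing$. Now let $V$ be any open set with $G^\star\cap V\neq\varnothing$. If one had $\dim_{\rm H}(K\cap V)\le t$, then $V$ would belong to $\mathcal U$, so $V\subset W$, contradicting $G^\star\cap V\neq\varnothing$. Therefore $\dim_{\rm H}(K\cap V)>t$, and writing $K\cap V=(G^\star\cap V)\cup(K\cap V\cap W)$ with the last piece of dimension $\le t$, countable stability gives $\dim_{\rm H}(G^\star\cap V)>t$, as required.

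\emph{Step 4 (Packing-dimension version).} The proof of (2) is verbatim the same, using the Joyce--Preiss theorem in Step 1 and countable stability of $\dim_{\rm P}$ in Steps 2--3. There is no real obstacle beyond invoking these two classical facts; the only point that requires care is the initial reduction to a compact subset of nearly full dimension, since this property fails for general Borel sets with the wrong notion of dimension but holds for $\dim_{\rm H}$ and $\dim_{\rm P}$ on analytic sets. Everything else is a short covering/separability argument, which is the heart of the lemma.
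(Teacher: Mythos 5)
Your proposal is correct and follows essentially the same route as the paper: extract a compact $K\subset G$ of dimension exceeding $t$ (Howroyd for Hausdorff, Joyce--Preiss for packing), delete the union of the open sets meeting $K$ in a set of dimension at most $t$, and use countable stability to see that the remaining compact set is nonempty and locally of dimension greater than $t$. The only cosmetic differences are that you obtain the countable subfamily via Lindel\"of rather than fixing a countable basis at the outset, and you deduce $G^\star\neq\varnothing$ from countable stability of dimension where the paper uses positivity of the $\mathcal{H}^{t'}$-measure of $K$; both are valid.
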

The conclusions in Lemma \ref{hauslem1} are known. We include a proof for completeness.
\begin{proof}
Since $G$ is analytic, from \cite{How}, there exists a compact set $K\subset G$ with $0<\mathcal{H}^{t'}(K)<\infty$ for 
some $t'>t$. Let $\{V_i\}_{i\ge1}$ be a countable basis of $X$. Denote 
\begin{equation}\label{V}
\mathcal{V}=\{i\colon \dim_{\rm H}(K\cap V_i)\le t\},
\end{equation}
Then $\bigcup_{i\in\mathcal{V}}V_i$ is relatively open in $K$, and hence $G^{\star}=K\backslash\bigcup_{i\in\mathcal{V}}V_i$ 
is compact. Note that
\begin{equation*}
\begin{split}
\mathcal{H}^{t'}(K)&=\mathcal{H}^{t'}\bigg(K\cap \bigcup_{i\in\mathcal{V}}V_i \bigg) + \mathcal{H}^{t'} 
\bigg( K\backslash\bigcup_{i\in\mathcal{V}}V_i\bigg)\\
&\le \mathcal{H}^{t'}\bigg(K\cap \bigcup_{i\in\mathcal{V}}V_i\bigg)+ \mathcal{H}^{t'}(G^{\star})\\
&\le \sum_{i\in\mathcal{V}}\mathcal{H}^{t'}(K\cap V_i) +\mathcal{H}^{t'}(G^{\star}).
\end{split}
\end{equation*}
From (\ref{V}) and $t'>t$, we get $\mathcal{H}^{t'}(K\cap V_i)=0.$ Therefore $$\mathcal{H}^{t'}(G^{\star})=\mathcal{H}^{t'}(K)>0.$$
In particular, $G^{\star}\ne\varnothing$. Since $\{V_i\}$ is a basis of $X$, for any open set $V\subset X$ with $V\cap G^{\star}
\ne\varnothing$, we have $\dim_{\rm H}(G^{\star}\cap V)>t$. 

The statement $(2)$ can be obtained similarly with $(1)$ by applying Corollary 1 of Joyce and Preiss \cite{JP}. We omit the details.
\end{proof}

The following lemma is directly derived from the definitions of Hausdorff dimension and Hausdorff measure.
\begin{lemma}\label{hauslem2}
If $G\subset X$ with $\dim_{\rm H}(G)>t$, then there is $k_0\ge1$ such that for $k\ge k_0$, there are at least $b^{-kt}$ elements in 
$\mathcal{Q}_k$ intersecting G, that is 
\[\#\{Q\in\mathcal{Q}_k\colon Q\cap G\ne\varnothing\}\ge b^{-kt}.\]
\end{lemma}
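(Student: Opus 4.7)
The plan is to interpret the cubes of $\mathcal{Q}_k$ that meet $G$ as a $\delta$-cover of $G$ in the sense of Hausdorff measure, and read off the desired lower bound on their number from the fact that $\mathcal{H}^t(G)=\infty$.

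First I would fix notation. By part (3) of Theorem \ref{thm:nest}, every $Q_{k,i}$ satisfies $Q_{k,i}\subset B(x_{k,i},c_2'b^k)$, so $\diam Q_{k,i}\le 2c_2'b^k$. Write
$$N_k:=\#\{Q\in\mathcal{Q}_k\colon Q\cap G\ne\varnothing\}\quad\text{and}\quad \delta_k:=2c_2'b^k.$$
By part (1) of Theorem \ref{thm:nest}, the collection $\{Q\in\mathcal{Q}_k\colon Q\cap G\ne\varnothing\}$ is a cover of $G$ by sets of diameter at most $\delta_k$, so it is admissible for the Hausdorff pre-measure at scale $\delta_k$. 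Therefore
$$\mathcal{H}^t_{\delta_k}(G)\le\sum_{Q\cap G\ne\varnothing}(\diam Q)^t\le N_k\,(2c_2')^t b^{kt}.$$

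Next I would invoke the hypothesis: since $\dim_{\rm H}G>t$, we have $\mathcal{H}^t(G)=\infty$, and so $\mathcal{H}^t_{\delta_k}(G)\uparrow\infty$ as $k\to\infty$ (because $\delta_k\to 0$). In particular there is an integer $k_0\ge1$ such that $\mathcal{H}^t_{\delta_k}(G)\ge(2c_2')^t$ for every $k\ge k_0$. Combining with the previous display gives, for all $k\ge k_0$,
$$N_k\ge\frac{\mathcal{H}^t_{\delta_k}(G)}{(2c_2')^t b^{kt}}\ge b^{-kt},$$
which is the desired conclusion.

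The argument is essentially a one-line application of the definition of Hausdorff measure, and I do not anticipate any real obstacle; the only point requiring care is to check that the generalized dyadic cubes have diameter bounded by the prescribed scale, which is immediate from the inclusion $Q_{k,i}\subset B(x_{k,i},c_2'b^k)$ supplied by Theorem \ref{thm:nest}. Note in particular that the Ahlfors regularity of $\mu$ and the counting estimate \eqref{Eq:count} are not needed here: the lower bound on $N_k$ is extracted purely from the geometric size of the cubes together with the metric assumption $\dim_{\rm H}G>t$.
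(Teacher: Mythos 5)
Your proof is correct and is exactly the ``direct derivation from the definitions of Hausdorff dimension and Hausdorff measure'' that the paper alludes to while omitting the details: the cubes of $\mathcal{Q}_k$ meeting $G$ form a $2c_2'b^k$-cover, and $\mathcal{H}^t(G)=\infty$ forces the stated count. No gaps; the diameter bound from Theorem \ref{thm:nest}(3) is the only geometric input needed, as you note.
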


For $k\ge1$, and $Q\in\mathcal{Q}_k$, denote by $B_Q(\in\mathcal{B}_k)$ the closed ball containing $Q$ given in Section 3.1. 
Write $X_Q$ for the indicator function of the event $\{\xi_n\in B_Q {\rm ~for ~some ~}n\in\mathfrak{I}'_k \}$. Then 
$$\mathbb{E}(X_Q)=\mathbb{P}\bigg(\bigcup_{n\in\mathfrak{I}'_k}\{\xi_n\in B_Q\}\bigg).$$ 

\begin{remark}\label{pro}
In the lemma below, we estimate ${\rm Cov}(X_{Q},X_{Q'})$ for some $Q',Q\in\mathcal{Q}_k$. Here we use $\mathfrak{I}'_k$ 
instead of $\mathfrak{I}_k$. Hence for any pair $n, m \in \mathfrak{I}'_k $, we have ${\rm dist}(n,m)\ge ck$ so that we can apply 
the exponential mixing property of $\{\xi_n\}$ to derive that 
$$\frac{\sum_{n\in \mathfrak{I}'_k}\sum_{m\in\mathfrak{I}'_k \atop n>m}\rho^{n-m}\mathbb{P}(\xi_n\in B_Q)}
{\big(\sum_{n\in \mathfrak{I}'_k} \mathbb{P}(\xi_n\in B_Q)\big)^2}\to 0,$$
as $k\to\infty$, which is crucial in our proofs. 
\end{remark}



\begin{lemma}\label{hauslem3}
Suppose $\a<s$ and $\epsilon>0$. 
There exists a constant $k_0\ge1$ such that for $k\ge k_0$, the following statements hold. \\
(1) If $B_Q\cap B_{Q'}=\varnothing$, where $Q,Q'\in \mathcal{Q}_k$, then 
\[{\rm Cov}(X_{Q},X_{Q'})<\epsilon\mathbb{E}(X_{Q})\mathbb{E}(X_{Q'}).\]
(2) There is a constant $0<M_0<\infty$ such that 
$$\max_{Q\in\mathcal{Q}_k}\#\{Q'\in\mathcal{Q}_k\colon B_Q\cap B_{Q'}\ne\varnothing \}\le M_0.$$
\end{lemma}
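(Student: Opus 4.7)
The plan is to treat part (2) as a direct geometric count using Lemma \ref{number}, and to establish part (1) via a second-moment computation powered by stationarity, exponential mixing, and the built-in spacing of $\mathfrak{I}'_k$. For (2), I would exploit that $B_Q = B(x_{k,i}, c_2' b^k)$ and $B_{Q'} = B(x_{k,j}, c_2' b^k)$: if these balls intersect then $d(x_{k,i}, x_{k,j}) \le 2 c_2' b^k$, so the centre $x_{k,j} \in Q'$ lies in $B(x_{k,i}, 2 c_2' b^k)$. Lemma \ref{number} with $a_0 = 2 c_2'$ then yields the uniform bound $M_0 = c_1^2 (4 c_2')^s / c_2^s$, independent of $k$ and of $Q$.

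For part (1), I would begin with the union bound
\[
\mathbb{E}(X_Q X_{Q'}) \;\le\; \sum_{\substack{n, m \in \mathfrak{I}'_k \\ n \neq m}} \mathbb{P}(\xi_n \in B_Q,\, \xi_m \in B_{Q'}),
\]
where the diagonal $n = m$ disappears because $B_Q \cap B_{Q'} = \varnothing$. For $n > m$ with $p := n - m \ge ck$, stationarity rewrites the joint probability as $\mathbb{P}(\xi_1 \in B_{Q'},\, \xi_{p+1} \in B_Q)$, and since $\{\xi_{p+1} \in B_Q\} \in \mathscr{A}^p$, Definition \ref{exp} supplies
\[
\bigl|\mathbb{P}(\xi_m \in B_{Q'},\, \xi_n \in B_Q) - \mu(B_Q)\mu(B_{Q'})\bigr| \;\le\; C \rho^{\,p-1} \mu(B_Q).
\]
The spacing $p \ge ck$ turns the $(n,m)$-sum of this error into a geometric tail of order $m_k \rho^{ck}$. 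A parallel Bonferroni lower bound gives $\mathbb{E}(X_Q) \ge m_k \mu(B_Q)\bigl(1 - O(m_k \mu(B_Q)) - O(\rho^{ck})\bigr)$, and likewise for $Q'$; subtracting then produces
\[
{\rm Cov}(X_Q, X_{Q'}) \;\le\; \Bigl(o(1) + O\bigl(\rho^{ck}/(m_k \mu(B_Q))\bigr)\Bigr)\, \mathbb{E}(X_Q)\mathbb{E}(X_{Q'}).
\]

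The main obstacle is forcing both correction terms below the prescribed $\epsilon$ uniformly. By Ahlfors $s$-regularity $\mu(B_Q), \mu(B_{Q'}) \le c_1 (c_2' b^k)^s$, and the formula $\alpha = \limsup_k \log_{b^{-1}} n_k / k$ together with $\alpha < s$ drives $m_k \mu(B_Q) \to 0$, handling the self-interaction error. The subtler term is $\rho^{ck}/(m_k b^{ks})$: this is precisely why one insists on $c > (s - \alpha)/\log_b \rho$, since it forces $\rho^{ck}$ to be of order $b^{k(s-\alpha)}$ or smaller, absorbing the factor $b^{ks}/m_k$ exactly on those $k$ at which $n_k$ is close to its limsup size $b^{-k\alpha}$, which is the regime in which the lemma will subsequently be applied.
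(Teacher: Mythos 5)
Your proposal follows the paper's proof essentially step for step. Part (2) is the same geometric count via Lemma \ref{number} (the paper routes it through ${\rm dist}(Q,Q')\le 2(c_2'-c_2)b^k$ rather than through the centres $x_{k,j}$, but the count is the same), and part (1) uses the identical chain: the diagonal $n=m$ vanishes because $B_Q\cap B_{Q'}=\varnothing$, the mixing bound $\mathbb{P}(\xi_n\in B_Q,\xi_m\in B_{Q'})\le\mathbb{P}(\xi_n\in B_Q)\mathbb{P}(\xi_m\in B_{Q'})+C\rho^{n-m}\mathbb{P}(\xi_n\in B_Q)$ is summed into a geometric tail of size $O(\rho^{ck})$ using the spacing built into $\mathfrak{I}'_k$, the Bonferroni bound \eqref{addition} normalizes $\mathbb{E}(X_Q)$ from below, and one lands exactly on the paper's inequality \eqref{num1}.

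The divergence is in your closing paragraph, and it exposes a genuine issue. The lemma asserts the covariance bound for \emph{every} $k\ge k_0$, but your argument (as you yourself concede) controls the term $\rho^{ck}/(m_kb^{ks})$ only for those $k$ at which $n_k$ is close to its limsup size: when $m_k=1$ this term equals $\rho^{ck}b^{-ks}=b^{k(c\log_b\rho-s)}$, which diverges unless $c\log_b\rho>s$, whereas the standing hypothesis $c>(s-\alpha)/\log_b\rho$ only gives $c\log_b\rho>s-\alpha$. So the proposal, read literally, proves the statement only along a subsequence of $k$, not for all $k\ge k_0$. The paper's own proof has the same soft spot --- it simply asserts that the right-hand side of \eqref{num1} tends to $0$ because $c>(s-\alpha)/\log_b\rho$ --- so you have located a real defect rather than introduced one; but to prove the lemma as stated you should close it. The cheap repair is to observe that $c$ is a free parameter which enters the rest of the argument only through the harmless polynomial factor $(ck)^{-1}$ in $m_k=\lceil(ck)^{-1}n_k\rceil$, so nothing is lost by taking $c>s/\log_b\rho$ from the outset; then $\rho^{ck}/(m_kb^{ks})\le\rho^{ck}b^{-ks}\to0$ using only the trivial bound $m_k\ge1$, and part (1) holds for all large $k$ (vacuously when $\mathfrak{I}'_k=\varnothing$). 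Alternatively, one must restate the lemma along the subsequence $\{k\colon n_k\ge b^{-k\beta}\}$ on which it is actually invoked.
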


\begin{proof}
Let $Q,\,Q'\in\mathcal{Q}_k$ such that
$B_Q\cap B_{Q'}=\varnothing$. Then
\begin{equation}\label{cov1}
\begin{split}
& {\rm Cov}(X_Q,X_{Q'})=\mathbb{E}(X_QX_{Q'})-\mathbb{E}(X_Q)\mathbb{E}(X_{Q'})\\
 &= \mathbb{P}\bigg(\bigcup_{n\in\mathfrak{I}'_k}\bigcup_{m\in\mathfrak{I}'_k}\{\xi_n\in B_Q,\xi_m\in B_{Q'}\}\bigg)
 -\mathbb{P}\bigg(\bigcup_{n\in\mathfrak{I}'_k}\{\xi_n\in B_Q\}\bigg)
 \mathbb{P}\bigg(\bigcup_{m\in\mathfrak{I}'_k}\{\xi_m\in B_{Q'}\}\bigg).
\end{split}
\end{equation}
Observe that
\begin{equation}\label{cov2}
\begin{split}
&\mathbb{P}\bigg(\bigcup_{n\in\mathfrak{I}'_k}\bigcup_{m\in\mathfrak{I}'_k}\{\xi_n\in B_Q,\xi_m\in B_{Q'}\}\bigg)\\
&=\mathbb{P}\bigg(\bigcup_{n\in\mathfrak{I}'_k}\{\xi_n\in B_Q\cap B_{Q'}\}\cup 
\bigcup_{n\in\mathfrak{I}'_k}\bigcup_{m\in\mathfrak{I}'_k\atop m\ne n}\{\xi_n\in B_Q,\xi_m\in B_{Q'}\}\bigg)\\
&=\mathbb{P}\bigg(\bigcup_{n\in\mathfrak{I}'_k}\bigcup_{m\in\mathfrak{I}'_k\atop m\ne n}\{\xi_n\in B_Q,\xi_m\in B_{Q'}\}\bigg),
\end{split}
\end{equation}
where the last equality follows from the fact that $B_Q\cap B_{Q'}=\varnothing$.

 Since $\{\xi_j\}_{j\ge1}$ is an exponentially mixing stationary process, if $n> m$, we obtain
\[\mathbb{P}(\xi_n\in B_Q,\xi_m\in B_{Q'})\le \mathbb{P}(\xi_n\in B_Q)\mathbb{P}(\xi_m\in B_{Q'})
+C\rho^{n-m}\mathbb{P}(\xi_n\in B_Q).\]
Whence we get an upper bound for (\ref{cov2}):
\begin{equation}\label{1}
\begin{split}
&\mathbb{P}\bigg(\bigcup_{n\in\mathfrak{I}'_k}\bigcup_{m\in\mathfrak{I}'_k\atop m\ne n}\{\xi_n\in B_Q,\xi_m\in B_{Q'}\}\bigg)\\
&\le\bigg(\sum_{n\in\mathfrak{I}'_k}\mathbb{P}(\xi_n\in B_Q)\bigg)\bigg(\sum_{m\in\mathfrak{I}'_k}\mathbb{P}(\xi_m\in B_{Q'})\bigg)
+\frac{C\rho^{ck}}{1-\rho^{ck}}\, \sum_{n\in\mathfrak{I}'_k}\mathbb{P}(\xi_n\in B_Q),
\end{split}
\end{equation}
and a lower bound for 
\begin{equation}\label{3}
\begin{split}
&\mathbb{P}\bigg(\bigcup_{n\in\mathfrak{I}'_k}\{\xi_n\in B_Q\}\bigg)\ge \sum_{n\in\mathfrak{I}'_k}
\mathbb{P}(\xi_n\in B_Q)-\sum_{n\in\mathfrak{I}'_k}\sum_{n'\in\mathfrak{I}'_k\atop n'\ne n}\mathbb{P}(\xi_n\in B_Q,\xi_{n'}\in B_Q)\\
&\ge \bigg(\sum_{n\in\mathfrak{I}'_k}\mathbb{P}(\xi_n\in B_Q)\bigg)\bigg(1-\sum_{n'\in\mathfrak{I}'_k}\mathbb{P}(\xi_{n'}\in B_Q)-
\frac{C\rho^{ck}}{1-\rho^{ck}}\bigg).
\end{split}
\end{equation}
Since $\a<s$, we can pick $q\in(\a,s)$. It follows from equality (\ref{alimsup}) that there exists $k_1\ge1$ such that  
$n_k\le b^{-kq}$ for all $k\ge k_1$.
 Hence for $k\ge k_1$,
\[\sum_{n\in\mathfrak{I}'_k}\mathbb{P}(\xi_n\in B_Q)\le c_1{c'_2}^sb^{ks}m_k\le c_1{c'}_2^s\Big(b^{ks}+(ck)^{-1}b^{k(s-q)}\Big),\]
which yields $\sum_{n\in\mathfrak{I}'_k}\mathbb{P}(\xi_n\in B_Q)\to 0$ as $k\to\infty$. From (\ref{3}), there is a $k_2\ge 1$ such that for $k\ge k_2$, 
 \begin{equation}\label{addition}
 \mathbb{E}(X_Q)\ge M_1\sum_{n\in\mathfrak{I}'_k}\mathbb{P}(\xi_n\in B_Q),
 \end{equation}
 where $0<M_1<1$ is a constant. By combining (\ref{cov1})--(\ref{addition}), we see that 
 \begin{equation*}\label{covs}
 \begin{split}
 {\rm Cov}(X_Q,X_{Q'})&\le \bigg(\sum_{n\in\mathfrak{I}'_k}\mathbb{P}(\xi_n\in B_Q)\bigg)\bigg(\sum_{m\in\mathfrak{I}'_k}
 \mathbb{P}(\xi_m\in B_{Q'})\bigg)\bigg(\sum_{n\in\mathfrak{I}'_k \atop n\ne m}\mathbb{P}(\xi_n\in B_Q)\\
 &\qquad +\sum_{m\in\mathfrak{I}'_k \atop m\ne n}\mathbb{P}(\xi_m\in B_{Q'})+\frac{4C\rho^{c k}}{1-\rho^{ck}}\bigg)
 +\frac{2C\rho^{ck}}{1-\rho^{ck}}\bigg(\sum_{n\in\mathfrak{I}'_k}\mathbb{P}(\xi_n\in B_Q)\bigg)\\
 &\le \bigg(\sum_{n\in\mathfrak{I}'_k}\mathbb{P}(\xi_n\in B_Q)\bigg)\bigg(\sum_{m\in\mathfrak{I}'_k}\mathbb{P}(\xi_m\in B_{Q'})\bigg)
 \Bigg\{\sum_{n\in\mathfrak{I}'_k}\mathbb{P}(\xi_n\in B_Q) \\
 &\qquad +\sum_{m\in\mathfrak{I}'_k}\mathbb{P}(\xi_m\in B_{Q'})
  +\frac{2C\rho^{ck}}{1-\rho^{ck}}\bigg(2+\Big(\sum_{m\in\mathfrak{I}'_k}\mathbb{P}(\xi_m\in B_{Q'})\Big)^{-1}\bigg)\Bigg\}.
 \end{split}
 \end{equation*}
 The inequality above together with (\ref{addition}) implies that for $k\ge k_2$, we have
\begin{equation}\label{num1}
\begin{split}
 \frac{{\rm Cov}(X_Q,X_{Q'})}{\mathbb{E}(X_Q)\mathbb{E}(X_{Q'})}&\le
\frac{1}{M_1^2}\Bigg\{\sum_{n\in\mathfrak{I}'_k}\mathbb{P}(\xi_n\in B_Q)+\sum_{m\in\mathfrak{I}'_k}\mathbb{P}(\xi_m\in B_{Q'})\\
&\qquad \quad+\frac{2C\rho^{ck}}{1-\rho^{ck}}\bigg(2+\Big(\sum_{m\in\mathfrak{I}'_k}\mathbb{P}(\xi_m\in B_{Q'})\Big)^{-1}\bigg)\Bigg\}.
 \end{split}
\end{equation}
Since $c>\frac{s-\alpha}{\log_{b} \rho}$, we derive that the right-hand side of (\ref{num1}) tends to $0$ as $k\to\infty$.

Thereby for any $\epsilon>0$, there is a $k_0\ge1$ satisfying for all $k\ge k_0$, if $Q,Q'\in\mathcal{Q}_k$ 
with $B_Q\cap B_{Q'}=\varnothing$,
we always have 
$${\rm Cov}(X_Q,X_{Q'})<\epsilon \mathbb{E}(X_Q)\mathbb{E}(X_{Q'}).$$
Notice that if the distance ${\rm dist}(Q,Q')\ge 2(c'_2-c_2)b^k$ for $Q,Q'\in\mathcal{Q}_k$, we have $B_Q\cap B_{Q'}
=\varnothing$. Then for $Q\in\mathcal{Q}_k$,
\[\#\{Q'\colon B_Q\cap B_{Q'}\ne\varnothing\}\le \#\{Q'\colon {\rm dist}(Q,Q')\le2(c'_2-c_2)b^k\}.\]
From Lemma \ref{number}, there exists a constant $0<M_0<\infty$ independent of $k$ such that 
$$\max_{Q\in\mathcal{Q}_k}\#\{Q'\colon B_Q\cap B_{Q'}\ne\varnothing\}\le M_0$$
holds for $k\ge k_0$.
\end{proof}

Now we are ready to prove Theorem \ref{hithaus1} and Theorem \ref{hithaus2}.
\begin{proof}[Proof of Theorem~\ref{hithaus1}]
	
	Firstly we show that $\dim_{\rm P}(G)<s-\alpha$ implies $\mathbb{P}(E\cap G\ne\varnothing)=0$. By Tricot \cite{Tri}, 
	it suffices to show that whenever $\overline{\dim}_{\rm B}(G)<s-\alpha$, then $E\cap G=\varnothing$ a.s.
	
	We denote by $C_{\ell_n}= C_{\ell_n}(G)$ a collection of the smallest number of the closed balls with radius $\ell_n$ 
	that cover the set $G$. Let $N_{\ell_n}(G) = \#C_{\ell_n}$. 
	Fixing an arbitrary $\eta>0$ such that $\eta\in (\overline{\dim}_{\rm B}(G),s-\alpha)$, we have
	\[\limsup_{n\to\infty} \frac{ \log N_{\ell_n}(G)}{-\log (\ell_n)} \le \overline{\dim}_{\rm B}(G)<\eta,\]
	so there exists an integer $n_0 \in\mathbb{ N}$ such that
	\begin{equation}\label{nln}
		N_{\ell_n}(G)<\ell_n^{-\eta}
	\end{equation}
	for all $n\ge n_0$. For any ball $B$ in $X$ with radius $\ell_n$, since $\{\xi_n\}_{n\ge1}$ is a stationary process, we have
	\[\mathbb{P}\{I_n\cap B\ne \varnothing\}=\mathbb{P}(\xi_n\in B(x_B,2\ell_n))=\mu(B(x_B,2\ell_n))\le c_12^s\ell_n^s,\]
	where $I_n=B(\xi_n,\ell_n)$ and $x_B$ is the center of $B$. Note that the event
	\[\{I_n\cap G\ne\varnothing\}\subset \bigcup_{B\in C_{\ell_n}}\{I_n\cap B\ne\varnothing\}.\]
	We derive from this and (\ref{nln}) that
	\[\mathbb{P}\{I_n\cap G\ne\varnothing\}\le \sum_{B\in C_{\ell_n}}\mathbb{P}\{I_n\cap B\ne\varnothing\}
	\le N_{\ell_n}(G)c_12^s\ell_n^s<c_12^s\ell_n^{s-\eta}\]
	for all $n \ge n_0$. Hence the series $\sum_{n=1}^{\infty}\mathbb{P}\{I_n\cap G\ne\varnothing\}$ converges by the 
	definition of $\alpha$ and $\eta <s-\a$. By the Borel--Cantelli Lemma, we have 
	\[\mathbb{P}\{I_n\cap G\ne\varnothing~\, \text{ i.o.}\} =0.\]
	That is, $E\cap G=\varnothing$ a.s.

Now we prove that if $\dim_{\rm H}(G)>s-\alpha$, then $\mathbb{P}(E\cap G\ne\varnothing)=1$. By Lemma \ref{hauslem1}, 
we may assume that $G$ is compact and satisfies $\dim_{\rm  H}(G\cap V)>s-\alpha$ whenever $V\subset X$ is an open 
set with $G\cap V\ne\varnothing$. We choose constants $\beta$ and $t$ such that 
$\dim_{\rm  H}(G\cap V)>t>s-\beta>s-\alpha$.

Denote by $U(\xi_n,\ell_n)$ the open ball with center $\xi_n$ and radius $\ell_n$.
It suffices to show that 
\begin{equation}\label{finale}
\mathbb{P} \big\{U(\xi_n,\ell_n)\cap G\cap V\ne \varnothing~\, \hbox{ i.o.} \big\}=1.
\end{equation}
Letting $V$ run over a countable basis of $X$, we have $G\cap\bigcup_{n=k}^{\infty}B(\xi_n,\ell_n)$ is dense 
a.s. and relatively open in $G$ for any $k\ge1$. Then from Baire's category theorem we derive 
$G\cap\limsup\limits_{n\to\infty} U(\xi_n,\ell_n)\ne\varnothing$ a.s.,  which implies that 
$E\cap G\ne \varnothing$ a.s.

Now we prove that the equality (\ref{finale}) holds. By (\ref{alimsup}) we have 
\[\alpha=\limsup_{k\to\infty}\frac{\log_{b^{-1}}n_k}{k}>\beta,\] 
then there are infinitely many $k$ such that $n_k\ge b^{-k\beta}$. This implies that the set defined as
\[\mathcal{N}=\{k\ge1\colon n_k\ge b^{-k\beta}\}\]
satisfies $\#\mathcal{N}=\infty$. 

Fix an open set $V$ with $G\cap V\ne\varnothing$. Define 
\[\mathcal{Z}_k=\{Q\in\mathcal{Q}_k\colon Q\cap G\cap V\ne\varnothing\}.\]
From Lemma \ref{hauslem2}, we have $N_k=\#\mathcal{Z}_k\ge b^{-tk}$ for all $k$ large enough. 
For $Q\in\mathcal{Q}_k$, there exists $B(x_Q,c'_2b^k)\in\mathcal{B}_k$, denoted by $B_Q$. 
For $k\in\mathcal{N}$, define 
\[S_k=\#\{Q\in\mathcal{Z}_k\colon \xi_n\in B_Q {\rm ~for ~some ~}n\in\mathfrak{I}'_k \},\]
that is $S_k=\sum_{Q\in\mathcal{Z}_k} X_Q$.
For $\epsilon>0$ and $Q\in\mathcal{Z}_k$, write
\[
\mathcal{D}_k(Q)=\big\{Q'\in \mathcal{Z}_k\colon {\rm Cov}(X_{Q},X_{Q'})\ge \epsilon\mathbb{E}(X_{Q})\mathbb{E}(X_{Q'})\big\},\] 
then from Lemma \ref{hauslem3}, there is a constant $0<M'_0<\infty$ such that $\max_{Q\in \mathcal{Z}_k} 
\#\mathcal{D}_k(Q)\le M'_0$ for $k$ large enough.
Since $${\rm Var}(S_k)=\sum_{Q\in\mathcal{Z}_k}\sum_{Q'\in\mathcal{Z}_k}{\rm Cov}(X_Q,X_{Q'}),$$ 
and  
$$ {\rm Cov}(X_Q,X_{Q'})=\mathbb{E}(X_QX_{Q'})-\mathbb{E}(X_Q)\mathbb{E}(X_{Q'})\le\mathbb{E}(X_Q), $$
it follows that 
\begin{equation*}\label{cov3}
\begin{split}
{\rm Var}(S_k)&\le\sum_{Q\in\mathcal{Z}_k}\bigg(\sum_{Q'\in \mathcal{Z}_k \setminus\mathcal{D}_k(Q)}\epsilon\mathbb{E}(X_Q)
\mathbb{E}(X_{Q'})+M'_0\mathbb{E}(X_Q)\bigg)\\
&\le \epsilon \big(\sum_{Q\in\mathcal{Z}_k}\mathbb{E}(X_Q)\big)^2+M'_0\sum_{Q\in\mathcal{Z}_k}\mathbb{E}(X_Q).
\end{split}
\end{equation*}
From (\ref{3}), we have 
\begin{equation*}
\mathbb{E}(S_k)=\sum_{Q\in\mathcal{Z}_k}\mathbb{E}(X_Q)
\ge \sum_{Q\in\mathcal{Z}_k}\bigg(\sum_{n\in\mathfrak{I}'_k}\mathbb{P}(\xi_n\in B_Q)\bigg)\,
\bigg(1-\sum_{n\in\mathfrak{I}'_k}\mathbb{P}(\xi_n\in B_Q)-\frac{2C\rho^{ck}}{1-\rho^{ck}}\bigg). 
\end{equation*}
It yields that there is a constant $0<M_2<\infty$ such that for all $k\in\mathcal{N}$ large enough, we have
\[\mathbb{E}(S_k)\ge M_2N_km_kb^{ks}\ge M_2(ck)^{-1}b^{k(s-t-\beta)}.\]
Recall $t+\beta>s$, if $k\in \mathcal{N}$ and $k\to\infty$, we get $\mathbb{E}(S_k)\to \infty$.

Combining these and Chebyshev's inequality, we obtain 
\begin{equation*}
\mathbb{P}(S_k=0)\le \frac{{\rm Var}(S_k)}{\mathbb{E}^2(S_k)}\le \epsilon+\frac{M'_0}{\mathbb{E}(S_k)}.
\end{equation*} 
Therefore $$\limsup_{k\in \mathcal{N}\atop k\to\infty}\mathbb{P}(S_k=0)=0.$$
We observe that 
\begin{equation*}
\{U(\xi_n,\ell_n)\cap G\cap V\ne \varnothing~ \, \hbox{ i.o.}\}\supset \{S_k>0~\, \hbox{ i.o.}\}.
\end{equation*}
Finally this together with Fatou's lemma implies that
\[\mathbb{P}(U(\xi_n,\ell_n)\cap G\cap V\ne \varnothing~ \, \hbox{ i.o.})\ge \limsup_{ k\to\infty}\mathbb{P}(S_k>0)=1.\]
This finishes the proof of Theorem \ref{hithaus1}.
\end{proof}

For proving Theorem \ref{hithaus2}, we will make use of the following lemma, which is an analogue of Lemma 3.4 in \cite{KPX}, 
where $X=[0,1]^N$ was considered. Here $(X,d)$ is an Ahlfors regular metric space.
\begin{lem}\label{lemlow}
Equip $X$ with the Borel $\sigma$-algebra. Suppose that $A=A(\omega)$ is a random set in $X$ (i.e., the indicator function 
$\chi_{A(\omega)}(x) $ is jointly measurable) such that for any analytic set $G\subset X$ with $\dim_{\rm H}G>\gamma$, 
we have 
$$\mathbb{P}(A\cap G\ne\varnothing)=1.$$
Then if $\dim_{\rm H}G>\gamma$, we have
\[\dim_{\rm H}(A\cap G)\ge \dim_{\rm H}G-\gamma,~~~~{\rm a.s.}\]
\end{lem}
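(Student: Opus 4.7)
My plan is to adapt the codimension/fractal-percolation strategy of Khoshnevisan--Peres--Xiao \cite{KPX} to the Ahlfors regular metric setting. Fix $\eta$ with $0<\eta<\dim_{\rm H}G-\gamma$ (if $\dim_{\rm H}G\le\gamma$ there is nothing to prove); I will show $\mathbb{P}(\dim_{\rm H}(A\cap G)\ge \eta)=1$ and then let $\eta$ increase to $\dim_{\rm H}G-\gamma$ along a countable sequence. Using the nesting family $\{\mathcal{Q}_k\}$ of Theorem \ref{thm:nest} as a substitute for dyadic cubes, I would construct, on an auxiliary probability space independent of $A$, a Mandelbrot-type fractal percolation $F\subseteq X$ by retaining each generalized dyadic cube at level $k$ independently (conditional on retention of its parent) with probability $p=b^{\eta+\delta}$, for a small $\delta>0$ to be chosen below. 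The counting estimate \eqref{Eq:count} together with standard branching-process arguments then gives that $F$ survives with positive probability and that, on the event of survival, $\dim_{\rm H}F=s-\eta-\delta$ almost surely.

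The technical core consists of two intersection properties of $F$ adapted from the classical Euclidean theory to the present setting. The \emph{hit} property: for any deterministic analytic $H\subseteq X$ with $\dim_{\rm H}H>\eta+\delta$, we have $\dim_{\rm H}(F\cap H)\ge \dim_{\rm H}H-\eta-\delta$ with positive probability. This is obtained by placing a Frostman measure of dimension slightly below $\dim_{\rm H}H$ on $H$, computing the first two moments of its mass on $F$, and running a Paley--Zygmund argument; the key two-point estimate reduces to bounding the expected $F$-mass inside a common level-$k$ cube by $b^{k(\eta+\delta-s)}$. The \emph{miss} property: for any deterministic analytic $B\subseteq X$ with $\dim_{\rm H}B<\eta+\delta$, we have $F\cap B=\varnothing$ almost surely, which follows from a first-moment/Borel--Cantelli argument using efficient covers of $B$ by generalized dyadic cubes.

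To finish, I take independent copies $F_1,F_2,\dots$ of $F$, all independent of $A$, and set $\tilde F=\bigcup_n F_n$. Choosing $\delta$ so that $\eta+\delta<\dim_{\rm H}G-\gamma$, the hit property applied to $G$ gives that $\dim_{\rm H}(F_n\cap G)>\gamma$ occurs with a fixed positive probability for each $n$; by independence, some $F_n$ realises this event almost surely, so $\dim_{\rm H}(\tilde F\cap G)>\gamma$ almost surely. Conditioning on $\tilde F$ and invoking the hypothesis applied to the analytic set $\tilde F\cap G$ then yields $A\cap \tilde F\cap G\ne\varnothing$ almost surely. On the other hand, if $\mathbb{P}(\dim_{\rm H}(A\cap G)<\eta)>0$, then on that event the miss property, applied conditionally on $A$ (which is legitimate because each $F_n$ is independent of $A$ and joint measurability of $\chi_{A(\omega)}$ lets us invoke Fubini to write the conditional probabilities), gives $\tilde F\cap(A\cap G)=\varnothing$ almost surely; this contradicts the previous display. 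I expect the main obstacle to be the rigorous proof of the hit and miss properties in the abstract Ahlfors regular metric space: in $\mathbb{R}^N$ these are classical results of Hawkes and others, but here one must work entirely within the generalized dyadic cubes of \cite{Krs} and recast all potential-theoretic and energy estimates relative to $\mu$ in place of Lebesgue measure.
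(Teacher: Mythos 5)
Your proposal follows essentially the same route as the paper's proof: both build a fractal percolation set of the appropriate codimension on the generalized dyadic cubes of Theorem \ref{thm:nest}, take a countable union of independent copies to upgrade positive hitting probability to probability one, apply the hypothesis of the lemma to the (analytic) intersection of that union with $G$, and then use the ``miss'' half of the percolation dichotomy to force $\dim_{\rm H}(A\cap G)\ge t$, finally letting $t$ increase to $\dim_{\rm H}G-\gamma$ along a countable sequence. The only divergence is at the level of the key sub-lemma: you propose to prove the hit and miss properties directly via Frostman measures and a Paley--Zygmund second-moment argument inside the cube structure, whereas the paper codes the cubes by a tree, checks that the coding preserves Hausdorff dimension, and quotes Lyons' percolation-on-trees theorem (Lemma \ref{perco}); either route is viable.
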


We postpone the proof of Lemma \ref{lemlow} to the end of this section. Let us first prove Theorem~\ref{hithaus2}. 
\begin{proof}[Proof of Theorem~\ref{hithaus2}]
Firstly we prove that $\dim_{\rm H}(E\cap G)\le \dim_{\rm P}G+\alpha-s$ a.s. when $\dim_{\rm P}G+\alpha-s\ge0$, and  it suffices to show that 
\[\dim_{\rm H}(E\cap G)\le \overline{\dim}_{\rm B}G+\alpha-s\quad{\rm a.s.}\]
Let $\mathcal{C}_{\ell_n}$ be a collection of the closed balls with radius $\ell_n$ whose union covers $G$ such that 
$\mathcal{N}_{\ell_n}(G)=\# \mathcal{C}_{\ell_n}$ is the smallest. Let $\xi:=\overline{\dim}_{\rm B}G+\a-s$, then for 
any $\epsilon>0$, we have 
\[\xi+s-\a+\epsilon>\overline{\dim}_{\rm B}G\ge \limsup_{n\to\infty}\frac{\log \mathcal{N}_{\ell_n}(G)}{-\log (\ell_n)},\]
which implies that  
\begin{equation}\label{nlnub}
\mathcal{N}_{\ell_n}(G)<\ell_n^{\a-s-\xi-\epsilon}
\end{equation}
for all $n$ large enough. For any $\delta>0$, we choose $N\ge1$ such that for all $n\ge N$, we have $\ell_n<\delta$ and 
inequality (\ref{nlnub}) holds.

Let
\[\mathcal{A}_n=\{B\in \mathcal{C}_{\ell_n}: B\cap I_n\ne \varnothing\}.\]
Notice that
\[\mathbb{E}(\#\mathcal{A}_n)\le \sum_{B\in\mathcal{A}_n}\mathbb{P}(I_n\cap B\ne\varnothing)\le c_12^s\ell_n^{\a-\xi-\epsilon},\]
and 
\[E\cap G\subset \bigcup_{n=N}^{\infty}I_n\cap G\subset  \bigcup_{n=N}^{\infty}\bigcup_{B\in\mathcal{A}_n}B.\]
Hence for any $\theta>\xi$, by choosing $\epsilon>0$ with $2\epsilon<\theta-\xi$, we have
\[\mathbb{E}\big(\mathcal{H}_{\delta}^{\theta}(E\cap G)\big)\le  \sum_{n=n_1}^{\infty}\mathbb{E}(\#\mathcal{A}_n)(2\ell_n)^{\theta}
\le\sum_{n=n_1}^{\infty}c_12^{\theta+s}\ell_n^{\a+s}.\]
By the definition of $\a$, we obtain that
\[\mathbb{E}\big(\mathcal{H}_{\delta}^{\theta}(E\cap G)\big)<\infty.\]
Therefore $\mathcal{H}^{\theta}(E\cap G)<\infty$ a.s. which implies 
$\dim_{\rm H}(E\cap G)\le\theta$ a.s. Hence $\dim_{\rm H}(G\cap E)\le \dim_{\rm P}G+\alpha-s$ a.s. 

When $\dim_{\rm P}G+\alpha-s<0$, by Theorem \ref{hithaus1}, $\mathbb{P}(E\cap G=\varnothing)=1$. Hence 
$\dim_{\rm H}(E\cap G)=-\infty$ a.s.

The lower bound in Theorem \ref{hithaus2} follows from Lemma \ref{lemlow}. This finishes the 
proof of Theorem \ref{hithaus2}.
\end{proof}

It remains to prove Lemma \ref{lemlow}. The fractal percolation ${\Gamma_t}$ used in Lemma \ref{lemlow} 
can be constructed as follows.

Let $0<p<1$. For each $Q\in\mathcal{Q}$, let $Z(Q)$ be a random variable taking value 1 with probability $p$ and 
value 0 with probability $1-p$. We assume that these random variables are independent for different $Q \in\mathcal{ Q}$. 
We define the random fractal percolation 
set as
\[\Gamma(p)=  \bigcap_{n\in\mathbb{N}}\bigcup_{Q\in\mathcal{Q}_n \atop Z (Q)=1}\overline{Q},\]
where $\overline{Q}$ is the closure of the set $Q$. When $p=2^{-t}<1$, for convenience, we denote $\Gamma_t
=\Gamma(2^{-t})$.

Notice that there is a nature tree structure behind the definition of $\mathcal{Q}$, which we describe now.   

Label each $Q\in\mathcal{Q}$ with a vertex $v_Q$ and let $T$ be a graph with vertex set $\{v_Q\}_{Q\in\mathcal{Q}}$. 
There exists an edge between  vertices $v_{Q_{n,i}}$ and $v_{Q_{m,j}}$ if and only if $|n-m|=1$ and $Q_{n,i}\cap 
Q_{m,j}\ne\varnothing$. Then $T$ is a tree with $v_X$ as its root. The boundary of the tree $\partial T$ consists of 
all infinite paths $(v_{0,i_0}v_{1,i_1}v_{2,i_2}\dots)$, where ${v_{m,i_m}}=v_{Q_{m,i_m}}$ for $Q_{m,i_m} \in
\mathcal{Q}_m$ and $Q_{m,i_m} \subset Q_{n,i_n}$, if $m\ge n$. We call these infinite 
paths $rays$. Then we can define a projection $\Pi\colon\partial T \to X$ as
\begin{equation} 
\Pi\colon (v_{{0,i_0}}v_{{1,i_1}}v_{{2,i_2}}\dots) \mapsto \bigcap_{n=0}^{\infty}\overline{Q_{n,i_n}}.
\end{equation}
 Note that $\Pi(\partial T) = X$.  
 For $v=(v_{{0,i_0}}v_{{1,i_1}}v_{{2,i_2}}\dots)$, $u =(u_{{0,i_0}}u_{{1,i_1}}u_{{2,i_2}}\dots)\in\partial T$, we define
\[
	\kappa(v,u)=
	\begin{cases}
		0& \ \ \text { if $u=v$,}\\
		b^{\min\{j\colon v_{{j,i_j}}\ne u_{{j,i_j}}\}}  &\  \ \text{ if $u\ne v$.}
	\end{cases} 
\]
Then $(\partial T,\kappa)$ is a metric space. We claim that for every $G\subset X$, we have
\begin{equation}\label{dimeq}
\dim_{\rm H}G= \dim_{\rm H}^{\kappa}(\Pi^{-1}G),
\end{equation}
where $\dim_{\rm H}^{\kappa}$ is the Hausdorff dimension in the metric space $(\partial T,\kappa)$. 

Now we prove this claim.  For any $x,y\in X$, let $n$ be the maximal integer with $x,y\in Q_n\in\mathcal{Q}_n$. By 
Theorem \ref{thm:nest}, we have $\diam(Q_n) \le 2c_2'b^{n}$. Then we get 
\[d(x, y)\le \diam(Q_n)\le 2c_2'b^n= 2c_2'b^{-1}\kappa(\Pi^{-1}x,\Pi^{-1}y),\]
which derives $\dim_{\rm H}G\le \dim_{\rm H}^{\kappa}(\Pi^{-1}G)$.

Further, let $t>\dim_{\rm H}G$. For $\epsilon>0$, there exists a $\delta$-covering $\{U_i\}_{i\ge1}$ of $G$ which satisfies 
$\sum_{i\ge1} (\diam{U_i})^t<\epsilon$. 
For $i\ge1$, let $n(i)$ be the integer with 
\begin{equation}\label{diam}
2c'_2b^{n(i)}\le \diam(U_i)<2c'_2b^{n(i)-1}.
\end{equation}
Write
\[\mathcal{A}(U_i)=\{Q\in \mathcal{Q}_{n(i)}\colon Q\cap U_i\ne\varnothing\}.\]
Then using (\ref{diam}), there is a constant $0<M_3<\infty$ such that for all $i\ge1$, $\#\mathcal{A}(U_i)\le M_3$. Note that
\[\Pi^{-1}G\subset \Pi^{-1}\Big(\bigcup_{i\ge1} U_i\Big)\subset \Pi^{-1}\Big(\bigcup_{i\ge1}\bigcup_{Q\in\mathcal{A}(U_i)}Q\Big).\]
Hence 
\begin{equation}\label{haus}
\mathcal{H}_{\delta}^t(\Pi^{-1}G)\le\sum_{i\ge1}\sum_{Q\in\mathcal{A}(U_i)}\kappa\Big((\Pi^{-1}(Q)\Big)^t\le \frac{M_3b^t}{(2c'_2)^t}
\sum_{i\ge1} (\diam U_i)^t<\epsilon.
\end{equation}
The second inequality in (\ref{haus}) holds due to (\ref{diam}).
Then we have $\mathcal{H}^t(\Pi^{-1}G)=0$ which derives that $t\ge\dim_{\rm H}^{\kappa}(\Pi^{-1}G)$. Therefore 
$\dim_{\rm H}G\ge\dim_{\rm H}^{\kappa}(\Pi^{-1}G)$. This proves (\ref{dimeq}).

For $0<p<1$, let $T$ be the tree defined above. $Percolation$ at level $p$ on $T$ is obtained by removing each edge of 
$T$ with probability $1-p$ and retaining it with probability $p$, with mutual independence among edges. The random 
graph connecting the root which is left will be denoted by $\widetilde{\Gamma}(p)$. Then the law of ${\Gamma}(p)$ given 
by $\mathbb{P}$ is the same as that of $\Pi(\widetilde{\Gamma}(p)$). By combining this with (\ref{dimeq}), we obtain the 
following analogue of the result of \cite[p.~957]{lyons} in our metric space setting. See also Lemma 5.1 in \cite{Pere} for 
the case of $X=[0,1]^N$. 

\begin{lemma}[\cite{lyons}]\label{perco}
Let $p=2^{-t}<1$. For any analytic set $G\subset X$, the following statements hold.
\begin{enumerate}
\item If $\dim_{\rm H}(G)<t$, then $\Gamma_t\cap G=\varnothing$ almost surely.
\item If $\dim_{\rm H}(G)>t$, then $\Gamma_t\cap G\ne\varnothing$ with positive probability.
\item If $\dim_{\rm H}(G)>t$, then $\|\dim_{\rm H}(G\cap \Gamma_t)\|_{\infty}=\dim_{\rm H}G-t$, where 
the $L^{\infty}$ norm is the essential supremum in the underlying probability space.
 \end{enumerate}
\end{lemma}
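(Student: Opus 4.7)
The plan is to lift the problem to the boundary $\partial T$ of the tree $T$ associated with the generalized dyadic cubes, where the classical theorems on Bernoulli edge percolation on trees apply directly. By construction, $\Gamma_t=\Gamma(2^{-t})$ has the same law as $\Pi\bigl(\widetilde{\Gamma}(2^{-t})\bigr)$, and the identity $\dim_{\rm H}G=\dim_{\rm H}^{\kappa}(\Pi^{-1}G)$ has already been established in (\ref{dimeq}). Consequently, all three claims reduce to the corresponding statements for $\widetilde{\Gamma}(2^{-t})$ and $\Pi^{-1}G$ inside the ultrametric boundary $(\partial T,\kappa)$. The offspring numbers of $T$ are uniformly bounded by Ahlfors regularity through (\ref{Eq:count}), so the tree-percolation machinery of Lyons and Peres is applicable; only the hypothesis transfer needs to be checked.

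For part (1), I would run the first-moment method. When $\dim_{\rm H}G<t$, cover $\Pi^{-1}G$ by tree-cylinders $\{[v_Q]\colon Q\in\mathcal{F}\}$ with $\sum_{Q\in\mathcal{F}}\kappa([v_Q])^{t}<\varepsilon$; since the probability that a level-$n$ cube survives $\widetilde{\Gamma}(2^{-t})$ decays at a rate comparable to $\kappa([v_Q])^{t}$ (up to constants arising from (\ref{Eq:count})), the expected $t$-Hausdorff premeasure of $\Pi^{-1}G\cap\widetilde{\Gamma}(2^{-t})$ is $O(\varepsilon)$, and letting $\varepsilon\to 0$ yields $\Gamma_t\cap G=\varnothing$ almost surely. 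For part (2), I would invoke Lyons' criterion: on a locally finite tree, percolation at level $2^{-t}$ intersects any Borel set $A\subset\partial T$ with positive probability provided $\dim_{\rm H}^{\kappa}A>t$, the proof using the second-moment method applied to a Frostman measure on $A$ of dimension slightly above $t$. Combined with (\ref{dimeq}), the hypothesis $\dim_{\rm H}G>t$ gives $\mathbb{P}(\Gamma_t\cap G\ne\varnothing)>0$.

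For part (3), the upper bound follows by a composition trick: suppose $r>\dim_{\rm H}G-t$ and, for contradiction, $\mathbb{P}\bigl(\dim_{\rm H}(G\cap\Gamma_t)>r\bigr)>0$. Running an independent percolation $\Gamma'_{r}$ and applying part (2) conditionally on $\Gamma_t$, we obtain $\mathbb{P}\bigl((G\cap\Gamma_t)\cap\Gamma'_r\ne\varnothing\bigr)>0$; but $\Gamma_t\cap\Gamma'_r$ is distributed as $\Gamma_{t+r}$, and $t+r>\dim_{\rm H}G$ forces $G\cap\Gamma_{t+r}=\varnothing$ a.s.\ by part (1), a contradiction. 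Hence $\dim_{\rm H}(G\cap\Gamma_t)\le\dim_{\rm H}G-t$ a.s. For the matching lower bound, I would fix $r<\dim_{\rm H}G-t$, select a compact $F\subset G$ of Hausdorff dimension exceeding $t+r$ carrying a Frostman measure $\nu$ of exponent $t+r$, and carry out a second-moment estimate for the total $\nu$-mass captured by the level-$n$ cubes surviving $\widetilde{\Gamma}(2^{-t})$. Because distinct vertices are percolated independently, the covariance of the relevant indicators depends only on the level of the nearest common ancestor, and a standard martingale argument produces, with positive probability, a nonzero limit measure of finite $r$-energy, whence $\dim_{\rm H}(G\cap\Gamma_t)\ge r$ with positive probability. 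The main obstacle is the essential-supremum formulation of the conclusion: to promote this positive-probability bound to the $\|\cdot\|_\infty$ identity, I would exploit the branching self-similarity of $\widetilde{\Gamma}(2^{-t})$ (conditional on the first generation, each surviving subtree is an i.i.d.\ copy of the whole process) to derive a Kolmogorov-type zero-one law for the event $\{\dim_{\rm H}(G\cap\Gamma_t)\ge r\}$, so that the essential supremum is deterministic and equals $\dim_{\rm H}G-t$.
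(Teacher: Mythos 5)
Your reduction is exactly the paper's: the paper does not prove Lemma \ref{perco} from scratch either, but obtains it by transporting Lyons' tree-percolation theorem through the projection $\Pi$ (using that $\Gamma(p)$ has the same law as $\Pi(\widetilde{\Gamma}(p))$) together with the dimension identity (\ref{dimeq}), citing \cite{lyons} and Lemma 5.1 of \cite{Pere}. The extra material you supply --- the first- and second-moment arguments for (1)--(2), the intersection trick (an independent $\Gamma'_r$ intersected with $\Gamma_t$ has the law of $\Gamma_{t+r}$) for the upper bound in (3), and the Frostman/energy construction plus a zero--one law to pin down the essential supremum --- is precisely the content of the cited results, so you have correctly expanded what the paper delegates to its references. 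One caveat, which affects the paper's statement as much as your sketch: with retention probability $p=2^{-t}$ per level but cubes of $\kappa$-diameter $\asymp b^{k}$, $b<1/3$, the survival probability $2^{-tk}$ of a level-$k$ cylinder is \emph{not} comparable to the $\kappa$-diameter raised to the power $t$ (that would be $b^{tk}$); the true critical exponent is $t\log 2/\log(1/b)<t$, so one should take $p=b^{t}$ for the thresholds in the lemma to sit exactly at $t$. This is a normalization slip inherited from the dyadic setting of \cite{KPX} and \cite{Pere}; once $p=b^{t}$ is used, both the lemma and your argument go through verbatim.
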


We end this section with a proof of Lemma \ref{lemlow}, by extending the method in \cite{KPX} to the Ahlfors regular 
metric spaces.

\begin{proof}[Proof of Lemma \ref{lemlow}]
For $t < \dim_{\rm H} (G)-\gamma$, let $\Gamma_t$ be a fractal percolation at level $2^{-t}$ in $X$ (\cite{lyons,Pere}), 
which is independent of $A$, and $\dim_{\rm H}(\Gamma_t)=s-t$ a.s. By Lemma \ref{perco}, we have 
$\mathbb{P}(\Gamma_t \cap G\ne\varnothing)>0$ whenever $\dim_{\rm H}(G) > t$, whereas $\mathbb{P}(\Gamma_t 
\cap G\ne\varnothing) =0$ with $\dim_{\rm H}(G) <t$, and
\[ \|\dim_{\rm H}(G\cap \Gamma_t)\|_{\infty} =\dim_{\rm H}(G)-t.\]
Let $\widehat{ \Gamma}_t$ be a union of countably many independent and identically distributed copies of $\Gamma_t$. 
The Borel-Cantelli lemma implies that
   \begin{equation}\label{hit}
   \mathbb{P}(\widehat{\Gamma}_t \cap G\ne\varnothing)=
	\begin{cases}
	0&  \text{if $\dim_{\rm H}(G)<t$,}\\
	1&  \text{if $\dim_{\rm H}(G)>t$.}
	\end{cases}
 \end{equation}
Also we have
\[\dim_{\rm H}(\widehat{\Gamma}_t\cap G)=\dim_{\rm H}(G)-t>\gamma,\quad{\rm a.s.}\]
By the condition given in the lemma, we have $A\cap G\cap\widehat{\Gamma_t}\ne\varnothing$ a.s. in the product space.
Using (\ref{hit}), we get $\dim_{\rm H}(A\cap G)\ge t$ a.s. Letting $t$ tend to $\dim_{\rm H}G-\gamma$ along rational numbers, 
we complete our proof.
\end{proof}

\section{Limsup Random Fractals in Metric Space}

In order to prove Theorem \ref{rc1}, we first extend the results on hitting probabilities of limsup random fractals in 
\cite{KPX} to metric spaces.

Let $\mathcal{Q}=\{\mathcal{Q}_n\}_{n\ge0}$ be the collection of generalized dyadic cubes in $(X,d)$ given in 
Section 3.1. To make sure the boundaries of sets in $\mathcal{Q}_n$ are covered, we will use $2Q$, $Q\in\mathcal{Q}_n$ instead of generalized dyadic cubes in this section. However, we still denote them by $Q$ and $\mathcal{Q}_n$ respectively for simplicity of notation. For each $n\ge1$, let $\{Z_n(Q),Q\in \mathcal{Q}_n\}$ be a collection of random variables, each 
taking values in $\{0,1\}$. 

Let
\[A(n)=\bigcup_{Q\in \mathcal{Q}_n,\atop Z_n(Q)=1}{Q^o},
\]
where $Q^o$ is the interior of $Q$. The random set 
\[A=\limsup_{n\to\infty}A(n)\]
is called a limsup random fractal associated to $\{Z_n(Q),n\ge 1,Q\in \mathcal{Q}_n\}$. 

We assume the following conditions (H1)--(H2), where (H1) is more general than 
Condition 4 in \cite{KPX}. We allow the probability $P_n(Q)$ to depend not only on the level $n$, but also on 
the cubes $Q\in\mathcal{Q}_n$.

(H1)\, Suppose that for every $n\ge1$, and $Q\in \mathcal{Q}_n$ the probability $P_n(Q):=
\mathbb{P}(Z_n(Q)=1)$ satisfies
\[\lim_{n\to\infty}\frac{\min_{Q\in\mathcal{Q}_n}\log_{b^{-1}}P_n(Q) }{n}=-\gamma_1,\]
and
\[\lim_{n\to\infty}\frac{\max_{Q\in\mathcal{Q}_n}\log_{b^{-1}}P_n(Q) }{n}=-\gamma_2,\]
where $\gamma_1,\gamma_2>0$ are constants.

\begin{remark}\label{subsequence}
From the proofs of Theorems \ref{thm2}, \ref{thm2'} and Corollary \ref{limsupest}, we see that they still hold if 
the condition (H$1$) is replaced by the weaker condition (H$1'$):

(H$1'$)\, For some constants $\gamma_1,\gamma_2>0$,
\[\limsup_{n\to\infty}\frac{\min_{Q\in\mathcal{Q}_n}\log_{b^{-1}}P_n(Q) }{n}=-\gamma_1,\]
\[\limsup_{n\to\infty}\frac{\max_{Q\in\mathcal{Q}_n}\log_{b^{-1}}P_n(Q) }{n}=-\gamma_2,\]
and there exists an increasing sequence of positive integers $\{n_i\}$ with $\lim_{i\to\infty}\frac{n_{i+1}}{n_i}=1$ 
such that 
\[
\lim_{i\to\infty}\frac{\min_{Q\in\mathcal{Q}_{n_i}}\log_{b^{-1}}P_{n_i}(Q) }{n_i}=-\gamma_1,\]
and
\[\lim_{i\to\infty}\frac{\max_{Q\in\mathcal{Q}_{n_i}}\log_{b^{-1}}P_{n_i}(Q) }{n_i}=-\gamma_2.\]
\end{remark}

The next condition is concerned with the strength of dependence among the random variables $\{Z_n(Q),\, 
n\ge 1,\, Q\in \mathcal{Q}_n\}$.  It is a slight modification of Condition 5 in \cite{KPX}.

(H$2$)\, A bound on correlation length: for any $\epsilon>0$, define 
\[f(n,\epsilon)=\max_{Q\in \mathcal{Q}_n}\#\{Q'\in \mathcal{Q}_n\colon{\rm Cov}(Z_n(Q),Z_n(Q'))\ge\epsilon P_n(Q)P_n(Q')\}.\]
Suppose that there is a constant $\delta\ge0$ such that for all $\epsilon>0$, 
\begin{equation}\label{ast}
	\limsup_{n\to\infty}\frac{1}{n}\log_{b^{-1}}f(n,\epsilon)\le\delta. \tag{$\ast$}
\end{equation}



The following theorem characterizes the hitting probability of the limsup random set $A$. It extends 
Theorem 3.1 in \cite{KPX}.

\begin{theorem}\label{thm2}
Assume that $A=\limsup\limits_{n\to\infty}A(n)$ is a limsup random set that satisfies the conditions 
({H}1) and ({H}2). Then for any analytic set $G\subset X$,
	\[\mathbb{P}(A\cap G\ne\varnothing)=
	\begin{cases}
	0&  \text{if $\dim_{\rm P}(G)<\gamma_2$,}\\
	1&  \text{if $\dim_{\rm P}(G)>\gamma_1+\delta$.}
	\end{cases}\]
\end{theorem}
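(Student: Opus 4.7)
\textbf{Proof plan for Theorem \ref{thm2}.} The plan is to prove the two implications by separate arguments: a first-moment / Borel--Cantelli argument for the zero-probability half, and a regularization plus a second-moment / Baire category argument for the full-probability half. The scheme follows Khoshnevisan--Peres--Xiao \cite{KPX}, the novelty being that the probabilities $P_n(Q)$ are allowed to depend on $Q$, so we replace their uniform rate by the extremal rates $\gamma_1,\gamma_2$ in (H1).

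For the implication $\dim_{\rm P}(G)<\gamma_2 \Rightarrow \mathbb{P}(A\cap G\neq\varnothing)=0$, by Tricot's characterization $\dim_{\rm P}(G)=\inf\{\sup_i \overline{\dim}_{\rm B}(E_i)\colon G\subset\bigcup_i E_i\}$ it suffices to assume $\overline{\dim}_{\rm B}(G)<\gamma_2$. Fixing $\eta\in(\overline{\dim}_{\rm B}(G),\gamma_2)$, the overlap estimate in Lemma \ref{number} together with the definition of upper box dimension ensures that, for all $n$ large enough, the number of $Q\in\mathcal{Q}_n$ intersecting $G$ is at most a constant multiple of $b^{-n\eta}$. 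The upper bound in (H1) yields $\max_Q P_n(Q)\le b^{n(\gamma_2-o(1))}$, and hence
\[
\mathbb{E}\bigl(\#\{Q\in\mathcal{Q}_n\colon Q\cap G\ne\varnothing,\ Z_n(Q)=1\}\bigr)\le b^{n(\gamma_2-\eta-o(1))},
\]
which is summable. The Borel--Cantelli lemma then forces $A(n)\cap G=\varnothing$ for all but finitely many $n$, so $A\cap G=\varnothing$ almost surely.

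For the implication $\dim_{\rm P}(G)>\gamma_1+\delta \Rightarrow \mathbb{P}(A\cap G\neq\varnothing)=1$, I would first apply Lemma \ref{hauslem1}(2) to pass to a nonempty compact $G_{\star}\subset G$ with $\dim_{\rm P}(G_{\star}\cap V)>\gamma_1+\delta$ for every open $V$ meeting $G_{\star}$. Since $\dim_{\rm P}\le\overline{\dim}_{\rm B}$, for such a $V$ and any $t\in(\gamma_1+\delta,\dim_{\rm P}(G_{\star}\cap V))$ there is an infinite sequence of integers $n$ along which $\#\{Q\in\mathcal{Q}_n\colon Q\cap G_{\star}\cap V\ne\varnothing\}\ge b^{-nt}$. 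Set $N_n:=\sum_{Q\cap G_{\star}\cap V\ne\varnothing} Z_n(Q)$. Using the lower bound $\min_Q P_n(Q)\ge b^{n(\gamma_1+o(1))}$ from (H1), one gets $\mathbb{E}(N_n)\ge b^{n(\gamma_1-t)+o(n)}\to\infty$ along this subsequence. For the variance, split pairs $(Q,Q')$ according to whether ${\rm Cov}(Z_n(Q),Z_n(Q'))<\epsilon P_n(Q)P_n(Q')$: the "good" pairs contribute at most $\epsilon(\mathbb{E} N_n)^2$, while (H2) controls the number of "bad" neighbours of any $Q$ by $f(n,\epsilon)\le b^{-n(\delta+o(1))}$, and the crude bound ${\rm Cov}(Z_n(Q),Z_n(Q'))\le P_n(Q)$ gives a contribution of at most $f(n,\epsilon)\,\mathbb{E}(N_n)$. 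Chebyshev's inequality then yields
\[
\mathbb{P}(N_n=0)\le \frac{{\rm Var}(N_n)}{(\mathbb{E} N_n)^2}\le \epsilon+\frac{f(n,\epsilon)}{\mathbb{E}(N_n)},
\]
and the ratio tends to $0$ along our subsequence because $t>\gamma_1+\delta$. Letting $\epsilon\downarrow 0$ and using Fatou, $\mathbb{P}(N_n>0\ {\rm i.o.})=1$, so $G_{\star}\cap V\cap\bigcup_{n\ge k}A(n)\ne\varnothing$ almost surely for every $k$. Letting $V$ range over a countable basis of $X$, each $G_{\star}\cap\bigcup_{n\ge k}A(n)$ is relatively open in $G_{\star}$ (since every $A(n)$ is open) and almost surely dense, and Baire's theorem applied to the compact space $G_{\star}$ produces a point in $G_{\star}\cap\limsup_n A(n)=G_{\star}\cap A$.

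The principal obstacle is to align the $V$-dependent subsequence on which $\#\{Q\cap G_{\star}\cap V\ne\varnothing\}\ge b^{-nt}$ with the asymptotic regimes in (H1) and (H2); since (H1) specifies only extremal exponents, the regularization to $G_{\star}$ (which upgrades the box-dimension lower bound to every open $V$) together with a countable basis is exactly what makes the Baire argument go through uniformly. A secondary technical issue is that this section uses the inflated cubes $2Q$ of the nested family, which may overlap; controlling this overlap via Lemma \ref{number} must be carried through the variance splitting, but is routine once absorbed into a universal constant.
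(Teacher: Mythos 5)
Your proposal is correct and follows essentially the same route as the paper: Tricot's characterization plus a first-moment/Borel--Cantelli argument (with Lemma \ref{number} bounding the overlap of cubes with a covering ball) for the null case, and Lemma \ref{hauslem1}(2) regularization, a second-moment estimate splitting pairs by (H2), and a Baire category argument over a countable basis for the full-probability case. The only cosmetic difference is that the paper phrases the second-moment step via the Paley--Zygmund inequality rather than Chebyshev, which is equivalent here.
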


For proving Theorem \ref{thm2}, we will use the following lemma on upper box dimension. From \cite{MR}, 
for any $r>0$ and any bounded set $G\subset X$, let $N_{r}(G)$ be the smallest number of closed balls 
with radius $r$ covering $G$. Then
 \[
 \overline{\dim}_{\rm B}(G)=\limsup_{r\to0}\frac{\log N_{r}(G)}{-\log r}.
 \]
 
 From it, we have the following lemma whose proof is standard and thus omitted.
 
\begin{lemma}\label{prop:box}
Let $\{k_i\}_{i\ge1}$ be an increasing sequence of positive integers satisfying (\ref{ki}). Then for any bounded 
set $G\subset X$, 
\begin{equation}\label{box}
\overline{\dim}_{\rm B}(G)=\limsup_{i\to\infty}\frac{\log N_{a_1r^{k_i}}(G)}{-\log (a_1 r^{k_i})},
\end{equation}
where $a_1>0$ is a constant.
\end{lemma}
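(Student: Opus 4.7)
The plan is to prove two inequalities between $\overline{\dim}_{\rm B}(G)$ and the $\limsup$ taken along the subsequence of radii $\{a_1 r^{k_i}\}_{i\ge 1}$, where $r\in(0,1)$. One direction is immediate: since $a_1 r^{k_i}\to 0$ as $i\to\infty$, the quantity on the right-hand side of \eqref{box} is the $\limsup$ of the defining quotient along a specific sequence of scales tending to zero, and is therefore bounded above by the full $\limsup_{r\to 0}$ that defines $\overline{\dim}_{\rm B}(G)$.

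For the reverse inequality I would compare an arbitrary small radius with consecutive terms of the subsequence. Given any $\epsilon > 0$ small enough, there is a unique index $i$ such that
\begin{equation*}
a_1 r^{k_{i+1}} \le \epsilon < a_1 r^{k_i}.
\end{equation*}
Monotonicity of the covering number $N_{\cdot}(G)$ gives $N_\epsilon(G)\le N_{a_1 r^{k_{i+1}}}(G)$, while $-\log(a_1 r^{k_i}) < -\log\epsilon$, so
\begin{equation*}
\frac{\log N_\epsilon(G)}{-\log\epsilon}
\;\le\; \frac{\log N_{a_1 r^{k_{i+1}}}(G)}{-\log(a_1 r^{k_{i+1}})}\cdot \frac{-\log(a_1 r^{k_{i+1}})}{-\log(a_1 r^{k_i})}.
\end{equation*}

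The correction factor equals
\begin{equation*}
\frac{\log(a_1^{-1}) + k_{i+1}\log(r^{-1})}{\log(a_1^{-1}) + k_i \log(r^{-1})},
\end{equation*}
which tends to $\lim_{i\to\infty} k_{i+1}/k_i = 1$ by the hypothesis \eqref{ki}. Taking $\limsup$ as $\epsilon\to 0$ (equivalently $i\to\infty$) yields
\begin{equation*}
\overline{\dim}_{\rm B}(G) \;\le\; \limsup_{i\to\infty}\frac{\log N_{a_1 r^{k_i}}(G)}{-\log(a_1 r^{k_i})},
\end{equation*}
completing the proof.

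The argument is essentially routine and contains no serious obstacle; the only point that requires care is the bookkeeping: one must match each small $\epsilon$ to the right pair of consecutive subsequence scales and track both the covering-number inequality and the logarithm-ratio carefully. The condition \eqref{ki} is precisely what forces the logarithm ratio to tend to $1$, so once that is observed the two $\limsup$'s must agree. This is consistent with the paper's statement that the proof is standard and therefore omitted.
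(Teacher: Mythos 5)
Your argument is correct and is precisely the standard sandwiching argument (monotonicity of $N_{\cdot}(G)$ between consecutive scales $a_1r^{k_{i+1}}\le\epsilon<a_1r^{k_i}$, plus the fact that \eqref{ki} forces the logarithm ratio to tend to $1$) that the paper itself declares standard and omits. No gaps; the only implicit assumption, that $r\in(0,1)$ so the scales $a_1r^{k_i}$ decrease to $0$, is the intended reading.
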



\begin{remark}\label{in}
For any bounded set $G\subset X$, consider $\mathcal{A}'=\{Q\in\mathcal{Q}_n\colon Q\cap G\ne\varnothing\}$, then 
$\#\mathcal{A}' \ge N_{c'_2b^n}(G)$ which derives from 
\[G\subset \bigcup_{Q\in\mathcal{A}'}Q\subset \bigcup_{x_Q\in Q\in\mathcal{A}'}B(x_Q,c'_2b^n),\]
where $x_Q\in Q$.
\end{remark}

\begin{proof}[Proof of Theorem~\ref{thm2}]
     The proof is a modification of that of Theorem 3.1 in \cite{KPX}. We include it for the sake of completeness. Firstly, 
     we show that $\dim_{\rm P}(G)<\gamma_2$ implies that $A\cap G=\varnothing$, a.s. It suffices to show that 
	whenever $\overline{\dim}_{\rm B}G<\gamma_2$, then $A\cap G=\varnothing$ a.s. Fix an arbitrary but small $\eta>0$ 
	so that $\overline{\dim}_{\rm B}(G)<\gamma_2-\eta$. 
	
	We denote by $\mathcal{C}_{b^n}= \mathcal{C}_{b^n}(G)$ a collection of the smallest number of the closed balls 
	with radius $b^n$ that cover the set $G$. Let $N_{b^n}(G) = \#\mathcal{C}_{b^n}$. For any 
	$\theta\in(\overline{\dim}_{\rm B}(G),\gamma_2-\eta)$, by Lemma \ref{prop:box},  we have
	\[\limsup_{n\to\infty} \frac{ \log N_{b^n}(G)}{-\log (b^n)} =\overline{\dim}_{\rm B}(G)<\theta,\]
	then there exists an integer $n(\theta)\ge1$ such that
	\begin{equation}\label{eq1}
		N_{b^n}(G)<b^{-n\theta}
	\end{equation}
	for all $n\ge n(\theta)$.  For any $W\in \mathcal{C}_{b^n}$, denote
	\[\mathcal{A}(W)=\{Q\in\mathcal{Q}_n\colon Q\cap W\ne\varnothing\}.\]
	Then we have
	\[G\subset \bigcup_{W\in \mathcal{C}_{b^n}}\bigcup_{Q\in\mathcal{A}(W)}Q.\]
	In the meanwhile, by Lemma \ref{number}, there is a constant $0<M<\infty$, which does not depend on $n$,  
	such that for all $W\in\mathcal{C}_{b^n}$, we have $\#\mathcal{A}(W)\le M$.	
	
	On the other hand, by condition (H1), for any $\eta>0$, there exists $n(\eta)$ such that for all $n\ge n(\eta)$,
	\begin{equation}\label{eq2}
		\max_{Q\in\mathcal{Q}_n}P_n(Q)\le b^{n(\gamma_2-\eta)}.
	\end{equation}
	It follows from (\ref{eq1}) and (\ref{eq2}) that for any $n\ge \max\{n(\theta),n(\eta)\}$,
	\begin{equation*}\label{one}
	\begin{split}
	\mathbb{P}\bigl(G\cap A(n)\ne\varnothing\bigr)&\le\mathbb{P}\bigg( \bigcup_{W\in \mathcal{C}_{b^n}}\bigcup_{Q\in\mathcal{A}(W)}
	Q\cap A(n)\ne\varnothing\bigg)\\
	&\le Mb^{-n\theta}\max_{Q\in \mathcal{Q}_n}\mathbb{P}\bigl(Q\cap A(n)\ne\varnothing\bigr)\\
	&= Mb^{-n\theta}\max_{Q\in \mathcal{Q}_n}P_n(Q)\le Mb^{n(\gamma_2-\eta-\theta)}.
	\end{split}
	\end{equation*}
	Since $\theta<\gamma_2-\eta$, then the series $\sum_{n\ge1}\mathbb{P}\bigl(G\cap A(n)\ne\varnothing\bigr)$ is convergent. 
	The Borel-Cantelli lemma implies that $G\cap A(n)=\varnothing$ a.s. for all $n$ large enough. This shows that $A\cap G=\varnothing$ a.s.

	In the following, we prove that if $\dim_{\rm P}G>\gamma_1+\delta$, then $\mathbb{P}(A\cap G\ne\varnothing)=1$.
	 From Lemma \ref{hauslem1} $(2)$, we can find a compact subset $G_{\star}\subset G$ such that for all 
	open sets $V$, whenever $G_{\star}\cap V \ne \varnothing$, then $\overline{\dim}_{\rm B}(G_{\star}\cap V)>\gamma_1+\delta$. 
	
	Fix an open set $V$ such that $V\cap G_{\star}\ne \varnothing$. Denote 
	\[\widetilde{\mathcal{A}}_n=\{Q\in \mathcal{Q}_n\colon Q^o\cap V\cap G_{\star}\ne\varnothing\}.\]
	Let $\mathcal{N}_n$ be the total number of $\widetilde{\mathcal{A}}_n$. Then
	\[G_{\star}\cap V\subset \bigcup_{Q\in\widetilde{\mathcal{A}}_n}Q\subset \bigcup_{Q\in\widetilde{\mathcal{A}}_n}B\big(x_Q,2c_2'b^n\big),
	\]
	where $x_Q\in Q$.
	Using Lemma \ref{prop:box}, we have
	\[ \limsup_{n\to\infty}\frac{\log N_{2c_2'b^n}(V\cap G_{\star}) }{-\log (2c_2'b^{n})}=\overline{\dim}_{\rm B}(V\cap G_{\star})>\gamma_1+\delta.\]
	For any $\eta\in(\gamma_1+\delta,\overline{\dim}_{\rm B}(V\cap G_{\star}))$, we derive that
	\[N_{2c_2'b^n}(V\cap G_{\star})\ge (2c'_2)^{-\eta}b^{-n\eta}\]
	holds for infinitely many $n$. 
	Hence $\mathcal{N}_n\ge (2c'_2)^{-\eta}b^{-n\eta}$ for infinitely many $n$. This implies the set 
	\begin{equation}\label{nn}
		\mathfrak{N}:=\{i\ge1:\mathcal{N}_{n_i}\ge  (2c'_2)^{-\eta}b^{-{n_i}\eta}\}
	\end{equation}
	satisfies $\#\mathfrak{N}=\infty$.We define 
	\[S_i:=\sum_{Q\in \widetilde{\mathcal{A}}_{n_i}}Z_{n_i}(Q),\]
	where $S_i$ is the total number of sets $Q\in \mathcal{Q}_{n_i}$ with $Q\cap V\cap G_{\star}\cap A({n_i})\ne\varnothing$. Observe that 
	\[\{A(n)\cap G_{\star}\cap V\ne\varnothing ~ \, \hbox{i.o.}\}\supset \{S_i>0~\, \hbox{ i.o.}\}.\]
	We need only show that $\mathbb{P}(S_i>0~\, \hbox{ i.o.})=1$. Firstly, we estimate
	\[{\rm Var}(S_i)=\sum_{ Q\in\widetilde{\mathcal{A}}_{n_i}}\sum_{Q'\in \widetilde{\mathcal{A}}_{n_i}}{\rm Cov}(Z_{n_i}(Q),Z_{n_i}(Q')).\]
	Fix $\epsilon>0$ and for each $Q\in \mathcal{Q}_{n_i}$, let $\mathcal{G}_{n_i}(Q)$ denote the collection of all $Q'\in \mathcal{Q}_{n_i}$ such that 
	\begin{enumerate}
		\item $Q'\cap V\cap G_{\star}\ne \varnothing$;
		\item ${\rm Cov}(Z_{n_i}(Q),Z_{n_i}(Q'))\le \epsilon P_{n_i}(Q)P_{n_i}(Q')$.
	\end{enumerate}
	That is 
	\[\mathcal{G}_{n_i}(Q)=\{Q'\in \widetilde{\mathcal{A}}_{n_i}\colon {\rm Cov}(Z_{n_i}(Q),Z_{n_i}(Q'))\le \epsilon P_{n_i}(Q)P_{n_i}(Q')\}.\]
	If $Q'\in \mathcal{Q}_{n_i}$ satisfies (1) but not (2), then we say $\mathcal{B}_{n_i}(Q)$,
	\[\mathcal{B}_{n_i}(Q)=\{Q'\in\widetilde{\mathcal{A}}_{n_i}\colon {\rm Cov}(Z_{n_i}(Q),Z_{n_i}(Q'))> \epsilon P_{n_i}(Q)P_{n_i}(Q')\}.\]
	 Then 
	\begin{equation}\label{Var}
		\begin{split}
		{\rm Var}(S_i)&\le \epsilon \bigg(\sum_{Q\in\widetilde{\mathcal{A}}_{n_i}}P_{n_i}(Q)\bigg)^2
		+ \bigg(\max_{Q\in \mathcal{Q}_{n_i}}\#\mathcal{B}_{n_i}(Q)\bigg)\bigg(\sum_{Q\in\widetilde{\mathcal{A}}_{n_i}}P_{n_i}(Q)\bigg).\\	
		\end{split}
	\end{equation}
	The last term follows from the fact that ${\rm Cov}(Z_n(Q),Z_n(Q'))\le \mathbb{E}(Z_n(Q))=P_n(Q)$. Recalling the notation 
	of (H2), we have
	\[{\rm Var}(S_i)\le  \epsilon \bigg(\sum_{Q\in\widetilde{\mathcal{A}}_{n_i}}P_{n_i}(Q)\bigg)^2+f({n_i},\epsilon)
	\bigg(\sum_{Q\in\widetilde{\mathcal{A}}_{n_i}}P_{n_i}(Q)\bigg).
	\]
	Combining this with the Paley-Zygmund inequality, we obtain 
	\begin{equation}\label{sn}
		\begin{split}
		\mathbb{P}(S_i>0)&\ge \frac{\bigl(\mathbb{E}(S_i)\bigr)^2}{\mathbb{E}(S_i^2)}=\frac{1}{1+\frac{{\rm Var}(S_i)}{(\mathbb{E}(S_i))62}}\\
		&\ge \frac{1}{1+\epsilon+\frac{f({n_i},\epsilon)}{\sum_{Q\in\widetilde{\mathcal{A}}_{n_i}}P_{n_i}(Q)}},
		\end{split}
	\end{equation}
	since $\mathbb{E}(S_i)=\sum_{Q\in\widetilde{\mathcal{A}}_{n_i}}P_{n_i}(Q)$. By the conditions (H1) and (H2), for any $\theta>0$ 
	with $2\theta<\eta-\delta-\gamma_1$, there exists $N$ such that for all $n\ge N$, we have
	\[f({n},\epsilon)\le b^{-(\delta+\theta)n}~ {\rm and}~\min_{Q\in\mathcal{Q}_n}P_{n}(Q)\ge b^{(\gamma_1+\theta)n}.\]
	Thus from (\ref{nn}) and arbitrariness of $\theta$, we have
	\begin{equation}\label{si0}
	\limsup_{ i\in \mathfrak{N}\atop i\to\infty}\frac{f({n_i},\epsilon)}{\sum_{Q\in\widetilde{\mathcal{A}}_{n_i}}P_{n_i}(Q)}
	\le \limsup_{ i\in \mathfrak{N}\atop i\to\infty}(2c'_2)^{-\eta}b^{-n_i(2\theta+\delta+\gamma_1-\eta)}=0.
	\end{equation}
	By inequalities (\ref{sn}), (\ref{si0}), and Fatou's lemma, we derive that
	\[\mathbb{P}(S_i>0~\, \hbox{ i.o.} )\ge \limsup_{ i\in \mathfrak{N}\atop i\to\infty}\mathbb{P}(S_i>0)=1.\]
	
	Define the open set $B(n):=\bigcup_{k=n}^{\infty}A(k)$ for $n\ge1$. It follows that
	\[\mathbb{P}(B(n)\cap G_{\star}\cap V\ne \varnothing,~\forall n\ge1)=1\]
	for every open set $V$ with $G_{\star}\cap V\ne\varnothing$. Since compact metric spaces are separated, then there 
	exists a countable basis for open sets of $(X,d)$. Letting $V$ run over the countable basis, we obtain that for all $n\ge1$, 
	the set $B(n)\cap G_{\star}$ is a.s. dense in $G_{\star}$. Since $G_{\star}$ is a complete metric space, by Baire's 
	category theorem, we have $\cap_{n=1}^{\infty}B(n)\cap G_{\star}$ is a.s. dense in $G_{\star}$.  In particular, 
	$A\cap G_{\star}\ne\varnothing$ with probability one.
\end{proof}

For $n\ge1$, let $\mathcal{B}_n=\{B(x_{n,i},2c'_2b^n)\colon i\in\mathbb{N}_n\}$ where $x_{n,i}$, $c'_2$ are given in Section 3.1, and $\mathcal{B}
=\{\mathcal{B}_n\}_{n\ge0}$. For each $n\ge1$, let $\{Z_n(B),B\in \mathcal{B}_n\}$ be a collection of random variables, 
each taking values in $\{0,1\}$. Let
\[F(n)=\bigcup_{B\in \mathcal{B}_n,\atop Z_n(B)=1} B^o.\]
Define the random set 
\[F=\limsup_{n\to\infty}F(n).\]

\begin{theorem}\label{thm2'}
Assume that $F$ satisfies the corresponding conditions (H1) and (H2). Then for any analytic set $G\subset X$, if 
$\dim_{\rm P}(G)>\gamma_1+\delta$, we have
	\[\mathbb{P}(F\cap G\ne\varnothing)=1.\]
\end{theorem}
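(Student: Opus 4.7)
The plan is to adapt the positive direction of the proof of Theorem \ref{thm2} to the setting of overlapping balls $\mathcal{B}_n$ in place of the (quasi-)dyadic cubes $\mathcal{Q}_n$. The overall architecture—reduction to a regular compact set via Lemma \ref{hauslem1}(2), a second-moment estimate on a counting variable using (H1) and (H2), Paley–Zygmund, and finally Baire's theorem to upgrade a single-open-set statement to a global one—carries over essentially verbatim. What needs to be checked is that the two geometric inputs used for $\mathcal{Q}_n$, namely a lower bound on the number of level-$n$ cells meeting $G_{\star}\cap V$ and a control on the multiplicity of overlap, remain valid for $\mathcal{B}_n$.

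First I would use Lemma \ref{hauslem1}(2) to pass to a compact subset $G_{\star}\subset G$ with $\overline{\dim}_{\rm B}(G_{\star}\cap V)>\gamma_1+\delta$ for every open set $V$ meeting $G_{\star}$. Fixing such a $V$, set
\[\widetilde{\mathcal{A}}_n=\{B\in\mathcal{B}_n\colon B^o\cap V\cap G_{\star}\ne\varnothing\}.\]
Since the balls of $\widetilde{\mathcal{A}}_n$ are of radius $2c_2'b^n$ and cover $V\cap G_{\star}$, we have $\#\widetilde{\mathcal{A}}_n\ge N_{2c_2'b^n}(V\cap G_{\star})$. Applying Lemma \ref{prop:box} along a sequence $\{n_i\}$ with $n_{i+1}/n_i\to 1$ on which the box-dimension limsup is attained, we obtain an $\eta\in(\gamma_1+\delta,\,\overline{\dim}_{\rm B}(V\cap G_{\star}))$ and infinitely many $i$ for which $\#\widetilde{\mathcal{A}}_{n_i}\ge(2c_2')^{-\eta}b^{-n_i\eta}$.

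Next I would put $S_i:=\sum_{B\in\widetilde{\mathcal{A}}_{n_i}}Z_{n_i}(B)$ and split the double sum defining $\mathrm{Var}(S_i)$ into pairs with $\mathrm{Cov}(Z_{n_i}(B),Z_{n_i}(B'))\le\epsilon P_{n_i}(B)P_{n_i}(B')$, which contribute at most $\epsilon(\mathbb{E}S_i)^2$, and the remaining pairs, which by condition (H2) contribute at most $f(n_i,\epsilon)\,\mathbb{E}S_i$ after bounding $\mathrm{Cov}\le P_{n_i}(B)$. The Paley–Zygmund inequality then gives
\[\mathbb{P}(S_i>0)\ge\frac{1}{1+\epsilon+\dfrac{f(n_i,\epsilon)}{\sum_{B\in\widetilde{\mathcal{A}}_{n_i}}P_{n_i}(B)}}.\]
By (H1), $\mathbb{E}S_i\ge\#\widetilde{\mathcal{A}}_{n_i}\cdot b^{n_i(\gamma_1+\theta)}\ge(2c_2')^{-\eta}b^{n_i(\gamma_1+\theta-\eta)}$; by (H2), $f(n_i,\epsilon)\le b^{-n_i(\delta+\theta)}$. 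Choosing $2\theta<\eta-\gamma_1-\delta$ makes the ratio tend to $0$ along the subsequence, so $\limsup_{i\to\infty}\mathbb{P}(S_i>0)\ge(1+\epsilon)^{-1}$ for every $\epsilon>0$, and Fatou's lemma yields $\mathbb{P}(S_i>0\text{ i.o.})=1$. Hence $F\cap G_{\star}\cap V\ne\varnothing$ almost surely. Letting $V$ run through a countable basis of $(X,d)$ and applying Baire's theorem to the relatively open dense subsets $\bigcup_{k\ge n}F(k)\cap G_{\star}$ of the complete metric space $G_{\star}$ gives $F\cap G_{\star}\ne\varnothing$ a.s., and hence $F\cap G\ne\varnothing$ a.s.

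The only genuine obstacle compared with Theorem \ref{thm2} is that the balls in $\mathcal{B}_n$ are no longer pairwise disjoint. This causes no harm: the lower bound $\#\widetilde{\mathcal{A}}_n\ge N_{2c_2'b^n}(V\cap G_{\star})$ only improves under overcounting, and the overlap multiplicity is uniformly bounded by Lemma \ref{number}, which is exactly what is needed when estimating the diagonal contribution to $\mathrm{Var}(S_i)$. Thus no new estimate beyond what is already available is required, and the proof proceeds along exactly the same lines as the positive direction of Theorem \ref{thm2}.
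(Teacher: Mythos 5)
Your proposal is correct and follows essentially the same route as the paper, which proves Theorem \ref{thm2'} simply by observing that the argument for Theorem \ref{thm2} goes through verbatim with the cubes $\mathcal{Q}_n$ replaced by the balls $\mathcal{B}_n$; your additional remarks on the harmlessness of overlaps (the covering lower bound for $\#\widetilde{\mathcal{A}}_n$ and the bounded multiplicity from Lemma \ref{number}) are exactly the points one needs to check.
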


\begin{proof}
By replacing the generalized dyadic cubes $\mathcal{Q}$ by balls in $\mathcal{B}$, we obtain the theorem directly from 
the proof of Theorem \ref{thm2}.
\end{proof}

\begin{corollary}\label{limsupest}
Suppose $A$ is a discrete limsup random fractal satisfying conditions (H1) and (H2) with $\delta=0$. Then for any 
analytic set $G\subset X$, with probability one,
\begin{align*}
            \dim_{\rm H}(A\cap G) \left\{\begin{array}{ll}
            \le \dim_{\rm P}(G)-\gamma_2& \quad \text{if~}\dim_{\rm P}(G)\ge \gamma_2,\\[2ex]
             =-\infty & \quad \text{if~}\dim_{\rm P}(G)< \gamma_2,\\[2ex]
             \ge\dim_{\rm H}(G)-\gamma_1 &\quad \text{if~}\dim_{\rm H}(G)>\gamma_1.
          \end{array}\right.
           \end{align*}
In particular, if $\gamma_1=\gamma_2<s$, then $\dim_{\rm H}(A)=s-\gamma_1$, {\rm a.s.}
\end{corollary}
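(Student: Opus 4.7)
The plan is to verify the three dimension bounds separately, using Theorem \ref{thm2} together with Lemma \ref{lemlow}, and then deduce the final clause by taking $G=X$. The middle case $\dim_{\rm P}(G)<\gamma_2$ is essentially free: Theorem \ref{thm2} with $\delta=0$ already gives $A\cap G=\varnothing$ almost surely, and the convention on empty sets then yields $\dim_{\rm H}(A\cap G)=-\infty$ a.s.

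For the upper bound $\dim_{\rm H}(A\cap G)\le\dim_{\rm P}(G)-\gamma_2$, I would use the characterisation $\dim_{\rm P}(G)=\inf\{\sup_i\overline{\dim}_{\rm B}(G_i):G=\bigcup_i G_i\}$ together with the countable stability of Hausdorff dimension to reduce to proving $\dim_{\rm H}(A\cap G)\le\overline{\dim}_{\rm B}(G)-\gamma_2$ a.s. for bounded $G$. Fixing $\theta>\overline{\dim}_{\rm B}(G)-\gamma_2$ and a small $\eta>0$, hypothesis (H1) yields $\max_{Q\in\mathcal{Q}_n}P_n(Q)\le b^{n(\gamma_2-\eta)}$ for all $n$ large. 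Covering $G$ by at most $b^{-n(\overline{\dim}_{\rm B}(G)+\eta)}$ balls of radius $b^n$ and invoking Lemma \ref{number} to bound the number of cubes $Q\in\mathcal{Q}_n$ hit by each such ball, a first-moment (Markov) estimate on $\mathcal{H}^\theta_\delta(A\cap G)$ — analogous to the upper-bound argument in the proof of Theorem \ref{hithaus2} — shows $\mathbb{E}\bigl(\mathcal{H}^\theta(A\cap G)\bigr)<\infty$, hence $\dim_{\rm H}(A\cap G)\le\theta$ almost surely.

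For the lower bound $\dim_{\rm H}(A\cap G)\ge\dim_{\rm H}(G)-\gamma_1$ when $\dim_{\rm H}(G)>\gamma_1$, I would check the hypothesis of Lemma \ref{lemlow} with $\gamma=\gamma_1$. Since $\delta=0$, Theorem \ref{thm2} ensures $\mathbb{P}(A\cap H\ne\varnothing)=1$ for every analytic $H\subset X$ with $\dim_{\rm P}(H)>\gamma_1$; because $\dim_{\rm P}\ge\dim_{\rm H}$, the same conclusion holds for every analytic $H$ with $\dim_{\rm H}(H)>\gamma_1$. Lemma \ref{lemlow} then applies directly and delivers $\dim_{\rm H}(A\cap G)\ge\dim_{\rm H}(G)-\gamma_1$ almost surely.

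Finally, the in-particular statement is obtained by taking $G=X$. Since $\mu$ is Ahlfors $s$-regular, $\dim_{\rm H}(X)=\dim_{\rm P}(X)=s$, and the hypothesis $\gamma_1=\gamma_2<s$ places us simultaneously in the ranges where the upper and lower bounds are valid and coincide at $s-\gamma_1$. I expect the upper bound to be the main technical obstacle, since it requires handling the two $\eta$'s (one from (H1) and one from the covering count) uniformly so that the Markov-type computation survives, and then propagating the bound from $\overline{\dim}_{\rm B}$ to $\dim_{\rm P}$ via countable decompositions; the other two bounds are essentially direct applications of the preceding results.
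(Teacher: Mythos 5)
Your proposal is correct and follows essentially the same route as the paper: the paper also reduces the upper bound to $\dim_{\rm H}(A\cap G)\le\overline{\dim}_{\rm B}(G)-\gamma_2$ via a first-moment estimate on the count $S_n$ of cubes in $\mathcal{Q}_n$ meeting both $G$ and $A(n)$ (using Lemma \ref{number} and (H1) exactly as you describe), obtains the empty-intersection case from the first part of Theorem \ref{thm2}, and derives the lower bound by feeding the $\delta=0$ hitting statement of Theorem \ref{thm2} into Lemma \ref{lemlow} with $\gamma=\gamma_1$, using $\dim_{\rm H}\le\dim_{\rm P}$. The final clause is likewise obtained by taking $G=X$ and using Ahlfors $s$-regularity.
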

\begin{proof}
It suffices to prove $\dim_{\rm H}(A\cap G)\le \overline{\dim}_{\rm B}G-\gamma_2$ a.s., if $\dim_{\rm P}(G)\ge \gamma_2$. Let $\mathcal{C}_{b^n}$ and 
$\mathcal{A}(W)$, $W\in\mathcal{C}_{b^n}$ be the same as described in the proof of Theorem \ref{thm2}. Define 
\[S_n=\sum_{W\in \mathcal{C}_{b^n}}\sum_{Q\in\mathcal{A}(W)}Z_n(Q).\]
Let $\xi=\overline{\dim}_{\rm B}G-\gamma_2$, then for $n$ large enough, we have
\[\mathbb{E}(S_n)=\sum_{W\in \mathcal{C}_{b^n}}\sum_{Q\in\mathcal{A}(W)}P_n(Q)\le Mb^{-n(\xi+2\epsilon)},\]
 where $0<M<\infty$ is a constant independent of $n$. Hence from the arbitrariness of $\epsilon$, it follows that 
 for any $\theta>\xi$, $\mathbb{E}(\sum_nS_nb^{\theta n})<\infty$.

Hence
\[\mathcal{H}^{\theta}(A\cap G)\le \sum_{n=m}^{\infty}S_nb^{n\theta}<\infty \quad{\rm a.s.}\]
which derives that $\dim_{\rm H}(A\cap G)\le \dim_{\rm P}G-\gamma_2$ a.s.

Finally, the lower bound of $\dim_{\rm H}(A\cap G)$ follows from Lemma \ref{lemlow}. The proof is complete.
\end{proof}

\section{Proof of Theorem ~\ref{rc1} }

\begin{proof}[Proof of Theorem~\ref{rc1}]
The proof is divided into three parts. In order to show that $\dim_{\rm P}(G)>s-\alpha$ implies 
$\mathbb{P}(E\cap G\ne\varnothing)=1$, we will use the hitting probability of limsup random fractals 
in Section 4. This is done in Parts (i) and (ii). Part (iii) determines the packing dimension of  $E\cap G$.

(i) Construction of a limsup random fractal $E_{\star}\subset E$. 

Firstly, we recall some notations from Sections 3, 4. For any $k \ge 2$, $\mathcal{B}_k=\{B(x_{k,i},2c'_2b^k):i\in\mathbb{N}_k\subset
\mathbb{N}\}$, $\mathfrak{I}_k = \{n\ge1: \ell_n\in [b^{k-1},b^{k-2})\}$, $n_k =\#\mathfrak{I}_k$ and $\mathfrak{I}'_k$ 
is a maximal collection of points in $\mathfrak{I}_k$ having mutual distances at least $ck$,
where $c$ is a given constant with $c>\frac{\alpha-s}{\log_{b^{-1}} \rho}$, and $\rho$ appears in Definition \ref{exp}. 
In this way, any pair of integers $n, m \in \mathfrak{I}'_k $ are at least of distance $c k$ from each other. Also, recall that 
 $m_k=\#\mathfrak{I}'_k =\lceil (c k)^{-1} n_k\rceil.$

For every $J\in \mathcal{B}_k$, define 
\begin{align*}
Z_k(J) = \left\{\begin{array}{cl}
1& {\rm if} ~ \exists ~n\in\mathfrak{I}'_k ~{\rm such ~that}~J\subset I_n=B(\xi_n,\ell_n),\\[2ex]
0& {\rm otherwise}.
\end{array}\right.
\end{align*}

Let $A(k)$ be the union of the interiors of sets in $\mathcal{B}_k$ that are contained in some $I_n$ in $\mathfrak{I}'_k$ 
with length $\ell_n \in [b^{k-1}, b^{k-2})$, that is 
\[A(k)=\bigcup_{J\in\mathcal{B}_k \atop Z_k(J)=1} J^o.\]
We observe that
$$A(k)\subset \bigcup_{n\in\mathfrak{I}'_k} I_n.$$
Define $E_{\star} := \limsup\limits_{k\to\infty} A(k)$. From the above, we have $E_{\star} \subset E$.

(ii) Hitting probability of $E_{\star}$. 

Now let $G \subset X$ be an analytic set such that $\dim_{\rm P} (G) >s-\alpha$. We show $\mathbb{P}( E_{\star} \cap G\ne \varnothing) = 1$. 

For every $J \in \mathcal{B}_k$, the probability
\[\mathbb{P}(Z_k(J)=1) =\mathbb{P}\{\exists~ n\in\mathfrak{I}'_k~ {\rm such~that}~J\subset I_n\}.\]
Denote the above probability by $P_k(J)$.

Write $J=B(x_J,2c'_2b^k)$. By using the stationarity on $\{\xi_n\}$, we derive that, 
\begin{equation}\label{upper}
\begin{split}
P_k(J)&\le \sum_{n\in\mathfrak{I}'_k}\mathbb{P}(J\subset I_n)
=\sum_{n\in\mathfrak{I}'_k}\mathbb{P}(\xi_n\in B(x_J,\ell_n-2c'_2b^k))\\
&\le c_1\sum_{n\in\mathfrak{I}'_k} (\ell_n-2c'_2b^k)^s \le c_3 m_k b^{ks},
\end{split}
\end{equation}
where $c_3=c_1(\frac{1}{b^2}-2c'_2)^s$ is a constant.

On the other hand, 
\begin{equation}\label{lower}
\begin{split}
P_k(J)&\ge \sum_{n\in\mathfrak{I}'_k}\mathbb{P}(J\subset I_n)-\sum_{n\in\mathfrak{I}'_k}\sum_{m\in\mathfrak{I}'_k \atop m\ne n}
\mathbb{P}(J\subset I_n, J\subset I_m).
\end{split}
\end{equation}

Since $\{\xi_n\}_{n\ge1}$ is stationary and exponentially mixing, if $n>m$, we have 
\[\mathbb{P}(J\subset I_n,J\subset I_m)\le\mathbb{P}(J\subset I_m)\mathbb{P}(J\subset I_n)+C\rho^{n-m}\mathbb{P}(J\subset I_n),\]
where $C, \rho$ are constants. We notice that $n,m\in\mathfrak{I}'_k$ derives $n-m\ge c k$, then 
\begin{equation}\label{later}
\begin{split}
&\sum_{n\in\mathfrak{I}'_k}\sum_{m\in\mathfrak{I}'_k \atop m\ne n}\mathbb{P}(J\subset I_n, J\subset I_m)\\
&\le \bigg(\sum_{n\in\mathfrak{I}'_k}\mathbb{P}(J\subset I_n)\bigg)\bigg(\sum_{m\in\mathfrak{I}'_k \atop m\ne n}
\mathbb{P}(J\subset I_m)\bigg)+2C\sum_{n\in\mathfrak{I}'_k}\sum_{m\in\mathfrak{I}'_k \atop m<n}\rho^{n-m}
\mathbb{P}(J\subset I_n)\\
&\le \bigg(\sum_{n\in\mathfrak{I}'_k}\mathbb{P}(J\subset I_n)\bigg)\bigg(\sum_{m\in\mathfrak{I}'_k \atop m\ne n}
\mathbb{P}(J\subset I_m)\bigg)+\frac{2C\rho^{c k}}{1-\rho^{ck}}\bigg(\sum_{n\in\mathfrak{I}'_k}\mathbb{P}(J\subset I_n)\bigg).
\end{split}
\end{equation}
It follows from (\ref{lower}) and (\ref{later}) that  
\begin{equation}\label{lowerr}
\begin{split}
P_k(J)&\ge \bigg(\sum_{n\in\mathfrak{I}'_k}\mathbb{P}(J\subset I_n)\bigg)\bigg(1-\sum_{m\in\mathfrak{I}'_k \atop m\ne n}
\mathbb{P}(J\subset I_m)-\frac{2C\rho^{c k}}{1-\rho^{ck}}\bigg)\\
&\ge c_4m_k b^{ks}\Big(1-c_3m_kb^{ks}-\frac{2C\rho^{c k}}{1-\rho^{ck}}\Big),
\end{split}
\end{equation}
where $c_4=c_1^{-1}(\frac{1}{b}-2c'_2)^s$ is a constant. Combining (\ref{upper}) and (\ref{lowerr}), together with the condition (C), 
we derive that
\begin{equation}\label{pka}
\limsup_{k\to\infty} \frac{\max_{J\in\mathcal{Q}_k}\log_{b^{-1}} P_k(J)}{k} =\limsup_{k\to\infty} \frac{\min_{J\in\mathcal{Q}_k}\log_{b^{-1}} 
P_k(J)}{k}= \alpha-s,
\end{equation}
and there is an increasing sequence of integers $\{k_i\}$ that satisfies (\ref{ki}) such that 
\begin{equation}\label{pki}
\lim_{i\to\infty} \frac{\max_{J\in\mathcal{Q}_{k_i}}\log_{b^{-1}} P_{k_i}(J)}{k_i} =\lim_{i\to\infty} \frac{\min_{J\in\mathcal{Q}_{k_i}}
\log_{b^{-1}} P_{k_i}(J)}{k_i}= \alpha-s.
\end{equation}

Next we verify that there is a bound on correlation length. 

First we estimate ${\rm Cov}(Z_k(J_1)Z_k(J_2))$, where $J_1,J_2\in \mathcal{B}_k $ with $d(J_1,J_2)\ge b^{k-3}$. From (\ref{later}), 
we have
\begin{equation}\label{e1}
\begin{split}
\mathbb{E}(Z_k(J_1)Z_k(J_2))
&=\mathbb{P}(Z_k(J_1)=1,Z_k(J_2)=1)\\
&=\mathbb{P}\{\exists~ m,n\in\mathfrak{I}'_k~{\rm such ~that~}J_1\subset I_n~{\rm and}~J_2\subset I_m\}\\
&\le \sum_{n\in\mathfrak{I}'_k}\sum_{m\in\mathfrak{I}'_k \atop m\ne n}\mathbb{P}(J_1\subset I_n,J_2\subset I_m)\\
&\le  \sum_{n\in\mathfrak{I}'_k}\mathbb{P}(J_1\subset I_n)\sum_{m\in\mathfrak{I}'_k \atop m\ne n}\mathbb{P}(J_2\subset I_m)
+\frac{2C\rho^{c k}}{1-\rho^{ck}}\sum_{n\in\mathfrak{I}'_k}\mathbb{P}(J_1\subset I_n).
\end{split}
\end{equation}

Since 
\begin{equation}\label{cov}
\begin{split}
{\rm Cov}(Z_k(J_1),Z_k(J_2))&=\mathbb{E}(Z_k(J_1)Z_k(J_2))-\mathbb{E}(Z_k(J_1))\mathbb{E}(Z_k(J_2))\\
&=\mathbb{E}(Z_k(J_1)Z_k(J_2))-P_k(J_1)P_k(J_2),
\end{split}
\end{equation}
from the first inequality in (\ref{lowerr}) and (\ref{e1}), we get
\begin{equation}\label{final}
\begin{split}
&{\rm Cov}(Z_k(J_1),Z_k(J_2))\\
&\le \bigg(\sum_{n\in\mathfrak{I}'_k}\mathbb{P}(J_1\subset I_n)\bigg)\bigg(\sum_{m\in\mathfrak{I}'_k}\mathbb{P}(J_2\subset I_m)\bigg)
\bigg(\sum_{n\in\mathfrak{I}'_k \atop n\ne m}\mathbb{P}(J_1\subset I_n)+\sum_{m\in\mathfrak{I}'_k \atop m\ne n}\mathbb{P}(J_2\subset I_m)
+\frac{4C\rho^{c k}}{1-\rho^{ck}}\bigg)\\
&\quad +\frac{2C\rho^{ck}}{1-\rho^{ck}}\bigg(\sum_{n\in\mathfrak{I}'_k}\mathbb{P}(J_1\subset I_n)\bigg)\\
&\le  \bigg(\sum_{n\in\mathfrak{I}'_k}\mathbb{P}(J_1\subset I_n)\bigg)\bigg(\sum_{m\in\mathfrak{I}'_k}\mathbb{P}(J_2\subset I_m)\bigg)
\bigg(\sum_{n\in\mathfrak{I}'_k \atop n\ne m}\mathbb{P}(J_1\subset I_n)+\sum_{m\in\mathfrak{I}'_k \atop m\ne n}\mathbb{P}(J_2\subset I_m)
+\frac{4C\rho^{ck}}{1-\rho^{ck}}\\
&\quad +\frac{2C\rho^{ck}}{(1-\rho^{ck})\sum_{m\in\mathfrak{I}'_k}\mathbb{P}(J_2\subset I_m)}\bigg).
\end{split}
\end{equation}

We notice that from (\ref{lowerr}) there exists a constant $0<M_4<\infty$ such that  
\[\mathbb{E}(Z_k(J))=P_k(J)\ge M_4\sum_{n\in\mathfrak{I}'_k}\mathbb{P}(J\subset I_n)\]
holds for all $J\in\mathcal{B}_k$ and all large enough $k$. Then the inequality (\ref{final}) reads as follows
\begin{equation*}
\begin{split}
{\rm Cov}(Z_k(J_1),Z_k(J_2))&\le \frac{1}{M_4^2}\mathbb{E}(Z_k(J_1))\mathbb{E}(Z_k(J_2))\bigg(2c_3m_kb^{ks}
+\frac{4C\rho^{c k}}{1-\rho^{ck}}+\frac{2C\rho^{c k}}{c_4(1-\rho^{ck})m_kb^{ks}}\bigg).
\end{split}
\end{equation*}
Here choosing ${k_i}$ in (\ref{pki}), then $\lim\limits_{i\to\infty}m_{k_i}b^{k_is}=0$, and
\[\lim_{i\to\infty}\frac{\rho^{c k_i}}{m_{k_i}b^{k_is}}\le\lim_{i\to\infty}\frac{c k_i\rho^{ck_i}}{n_{k_i}b^{sk_i}}
=\lim_{i\to\infty}\frac{c k_i\rho^{c k_i}}{b^{(s-\alpha)k_i}}=0,\]
due to $\rho^{c}< b^{s-\alpha}$. We derive from the equalities above that for any $\epsilon>0$,
\[{\rm Cov}\Big(Z_{k_i}(J_1),Z_{k_i}(J_2)\Big)<\epsilon \mathbb{E}(Z_{k_i}(J_1))\mathbb{E}(Z_{k_i}(J_2))\]
holds for $i$ large enough. From Lemma \ref{number}, this implies that there is a constant $0<M_5<\infty$ independent 
of $k_i$ such that $f(k_i,\epsilon)\le M_5$, and recall
\[f(k,\epsilon)=\max_{J_2\in\mathcal{B}_k}\#\Big\{J_1\in\mathcal{B}_k:{\rm Cov}\Big(Z_k(J_1),Z_k(J_2)\Big)\ge\epsilon 
\mathbb{E}(Z_k(J_1))\mathbb{E}(Z_k(J_2))\Big\}.\]
In particular,
\[\lim_{i\to\infty}\frac{\log_{b^{-1}} f(k_i,\epsilon)}{k_i}=0.\]
Thus we have shown that the condition (H2) in Section 4 is satisfied with $\delta=0$.

Now that we have verified that $E_{\star}$ satisfies the conditions (H$1'$) and (H2) in Section 4, we apply Theorem 
\ref{thm2'} to conclude $E_{\star}\cap G\ne\varnothing$ a.s., which yields $\mathbb{P}(E\cap G\ne\varnothing)=1$. 

(iii) The packing dimension of $E\cap G$.

On the same probability space, let  $E'$ be a random covering set that is independent of $E$ and is associated with 
$\{\xi'_n\}$ and $\{\ell'_n\}$, where $\{\xi'_n\}$ is an exponentially mixing stationary process, and $\{\ell'_n\}$ satisfies 
the condition (C) with the Besicovitch-Taylor index $\alpha'<s$. 

Let the compact set $G_{\star}$ and the open sets $A(k)$ be as described in the proofs of Theorems \ref{thm2} 
and \ref{rc1}, respectively. Let $\{A'(k) \}$ be the sequence of open sets corresponding to $E'$. If $\dim_{\rm P}G 
>s-\min\{\alpha,\alpha'\}$, by the first part of Theorem \ref{rc1}, we have
\[\mathbb{P}\bigg(\Big(\bigcup_{k=n}^{\infty}A(k)\Big)\cap V \cap G_{\star}\ne\varnothing,\forall n\ge1\bigg)
=\mathbb{P}\bigg(\Big(\bigcup_{k=n}^{\infty}A'(k)\Big)\cap V \cap G_{\star}\ne\varnothing,\forall n\ge1\bigg)=1\]
for all open set $V$ satisfying $V\cap G_{\star}\ne\varnothing$. By independence, we have
\[
\mathbb{P}\bigg(\Big(\bigcup_{k=n}^{\infty}A(k)\Big)\cap V \cap G_{\star}\ne\varnothing, ~
\Big(\bigcup_{k=n}^{\infty}A'(k)\Big)\cap V 
\cap G_{\star}\ne\varnothing,~\forall n\ge1\bigg)=1.
\]
Let $V$ run over a countable basis of $X$, then $\big\{\bigcup_{k=n}^{\infty}A(k)\cap G_{\star}\big\}_{n\ge1}
\cup\big\{\bigcup_{k=n}^{\infty}A'(k)\cap 
G_{\star}\big\}_{n\ge1}$ is a countable collection of open, dense subsets of the complete metric space $G_{\star}$. 
Baire's category theorem implies that 
\[\mathbb{P}(E\cap E'\cap G_{\star}~{\rm is ~dense~in} ~G_{\star})=1.\]
In particular, $E\cap E'\cap G_{\star}\ne\varnothing$ a.s. Hence $\mathbb{P}(E\cap E'\cap G\ne\varnothing)=1$.

Now we consider the random covering set $E'$, and regard $E\cap G$ as the target set. By Theorem \ref{hithaus1}, 
we must have  
$\dim_{\rm P}(E\cap G)\ge s-\alpha'$ a.s. Hence we have proved that $\dim_{\rm P}(G)>s-\min\{\alpha,\alpha'\}$ implies 
$\dim_{\rm P}(E\cap G)\ge s-\alpha'$ a.s. Consequently, if $\dim_{\rm P}(G)>s-\alpha$, then for $\alpha'\in(s-\dim_{\rm P}G,\alpha)$ 
we have $\dim_{\rm P}(E\cap G)\ge s-\alpha'$ a.s. Letting $\alpha'$ tend to $s-\dim_{\rm P}G$ along rational numbers, we get 
\[\dim_{\rm P}(G\cap E)\ge \dim_{\rm P}G,\quad {\rm a.s.}\]
Hence $\dim_{\rm P}(G\cap E)=\dim_{\rm P}G$ a.s. This finishes the proof of Theorem \ref{rc1}.
\end{proof}

\section{Applications}

In this section we present some dynamical systems which satisfy all conditions of our results.
\subsection{Continued fraction dynamical system}

Let $T_G$ be the Gauss map on $(0, 1]$. The Gauss measure $\mu$ on $(0, 1]$ are given by $$d\mu=\frac{1}{\log 2}\frac{dx}{(1+x)}.$$ 
From \cite{CRN}, $T_G$ preserves the Gauss measure $\mu$ which is equivalent to the Lebesgue measure $\mathcal L$. By Philipp \cite{Phi}, 
the system $((0, 1], T_G)$ is exponentially mixing with respect to the Gauss measure $\mu$. Hence our results are applicable to the continued 
fraction dynamical system. For a given point $x\in (0, 1]$, let
\[E_{G}(x)=\{y\in (0, 1]\colon y\in B(T_{G}^nx,\ell_n)~{\rm i.o.}\}.\]

\begin{theorem}\label{cfds1}
Let $\mu$ be the Gauss measure. For any analytic set $G\subset (0, 1]$ we have, for $\mu$-{\rm a.e.} $x\in(0, 1]$
 \begin{equation*}
\begin{array}{cl}
E_G(x)\cap G=\varnothing& {\rm if}~\dim_{\rm P}(G)<1-\alpha,\\[2ex]
E_G(x)\cap G\ne\varnothing& {\rm if}~\dim_{\rm H}(G)>1-\alpha.
 \end{array}
\end{equation*}
 Furthermore under the condition (C), if $\dim_{\rm P}(G)>1-\alpha$,
\begin{equation*}
	 E_{G}(x)\cap G\ne\varnothing\quad{\rm a.e.}
\end{equation*}
\end{theorem}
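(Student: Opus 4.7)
The plan is to recognize Theorem \ref{cfds1} as a direct corollary of Theorem \ref{dc3} (applied with $s=1$) specialized to the system $((0,1], T_G, \mu)$, so the only real work is to check that the Gauss system satisfies the two structural hypotheses of Theorem \ref{dc3}: (i) $\mu$ is Ahlfors $s$-regular for some $0<s<\infty$, and (ii) the m.m.p.s. is exponentially mixing.

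First I would verify Ahlfors $1$-regularity of the Gauss measure. The density $h(x)=\frac{1}{\log 2}\cdot\frac{1}{1+x}$ satisfies
\[
\frac{1}{2\log 2}\le h(x)\le \frac{1}{\log 2}\qquad\text{for all } x\in(0,1].
\]
A direct computation shows that for every $x\in (0,1]$ and every $r\in(0,1]$ the intersection $B(x,r)\cap(0,1]$ is an interval of Lebesgue length between $r$ and $2r$ (since at most one endpoint of $(x-r,x+r)$ falls outside $(0,1]$). Multiplying the two bounds gives the two-sided estimate $c_1^{-1}r\le\mu(B(x,r))\le c_1 r$ with $c_1=2\log 2$. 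Hence $\mu$ is Ahlfors $s$-regular with $s=1$ on the compact metric space $[0,1]$ (after adjoining the $\mu$-null point $0$, which has no effect on the covering set).

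Next I would record exponential mixing. The Gauss map preserves $\mu$ \cite{CRN}, and by Philipp \cite{Phi} the transfer operator of $T_G$ has a spectral gap on a suitable function space; in particular, for every ball $E\subset(0,1]$ and every $F\in\mathscr{B}$ with $\mu(F)>0$ one has
\[
\bigl|\mu(E\mid T_G^{-n}F)-\mu(E)\bigr|\le C\rho^{n}
\]
for some constants $C>0$ and $\rho\in(0,1)$. Thus $((0,1],\mathscr{B},\mu,T_G,|\cdot|)$ is an exponentially mixing m.m.p.s. in the sense of the paper.

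With both hypotheses verified, I would invoke Theorem \ref{dc3} with $X=(0,1]$, $T=T_G$, $s=1$ and the prescribed sequence $\{\ell_n\}$ having upper Besicovitch--Taylor index $\alpha<1$. The dynamical covering set $E_G(x)$ coincides with the set $E(x)$ of \eqref{Eq:DyCov} for this system, so Theorem \ref{dc3} immediately yields, for $\mu$-a.e.\ $x$, that $E_G(x)\cap G=\varnothing$ whenever $\dim_{\rm P}G<1-\alpha$ and $E_G(x)\cap G\ne\varnothing$ whenever $\dim_{\rm H}G>1-\alpha$, and under condition (C), also $E_G(x)\cap G\ne\varnothing$ whenever $\dim_{\rm P}G>1-\alpha$. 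Since $\mu$ and Lebesgue measure are equivalent, ``$\mu$-a.e.'' is the same as ``a.e.'', which gives the final statement. There is no real obstacle here beyond the two verifications above; the substantive work is entirely contained in Theorem \ref{dc3}.
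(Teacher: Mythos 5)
Your proposal is correct and follows exactly the paper's route: verify that the Gauss measure is Ahlfors $1$-regular (the paper does this by noting equivalence with Lebesgue measure) and that the system is exponentially mixing by Philipp's theorem, then apply Theorem \ref{dc3} with $s=1$. The only blemish is the stated constant: with density bounds $\frac{1}{2\log 2}\le h\le\frac{1}{\log 2}$ and interval length between $r$ and $2r$, the correct two-sided constant is $c_1=\frac{2}{\log 2}$ rather than $2\log 2$, but this is immaterial to the argument.
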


\begin{theorem}\label{cfds2}
For any analytic set $G\subset (0, 1]$ we have a.e.
\begin{align*}
            \dim_{\rm H}(E_G(x)\cap G) \left\{\begin{array}{ll}
            \le \dim_{\rm P}(G)+\alpha-1& \quad \text{if~}\dim_{\rm P}(G)\ge 1-\alpha,\\[2ex]
             =-\infty & \quad \text{if~}\dim_{\rm P}(G)< 1-\alpha,\\[2ex]
             \ge\dim_{\rm H}(G)+\alpha-1 &\quad \text{if~}\dim_{\rm H}(G)>1-\alpha.
          \end{array}\right.
           \end{align*}
Moreover, if $\dim_{\rm P}(G)>1-\alpha$ and the condition (C) is satisfied, then $\dim_{\rm P}(E_G(x)\cap G)=\dim_{\rm P}(G)$~~{\rm a.e.}
\end{theorem}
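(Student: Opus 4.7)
The plan is to deduce Theorem \ref{cfds2} as a direct application of Theorem \ref{dc1} to the specific system $((0,1], T_G, \mu)$ with $s = 1$. The only work is to verify the two hypotheses of Theorem \ref{dc1}: that $\mu$ is Ahlfors $1$-regular, and that the system is exponentially mixing in the sense of Definition 2. Once these are in hand, substituting $s=1$ into the three-case conclusion of Theorem \ref{dc1} yields precisely the Hausdorff-dimension statement of Theorem \ref{cfds2}, and the ``moreover'' clause of Theorem \ref{dc1} yields the packing-dimension statement provided condition~(C) holds.

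For Ahlfors $1$-regularity, I would observe that the Gauss density $\frac{d\mu}{d\mathcal L}(x) = \frac{1}{(\log 2)(1+x)}$ satisfies $\frac{1}{2\log 2} \le \frac{d\mu}{d\mathcal L}(x) \le \frac{1}{\log 2}$ for every $x\in(0,1]$. Since Lebesgue measure on the interval is trivially Ahlfors $1$-regular (that is, $\mathcal L(B(x,r)) \asymp r$ uniformly for $0 < r \le 1$ and $x \in (0,1]$), the two-sided bounds on the density transfer this regularity to $\mu$: there is a constant $c_1 \ge 1$ with $c_1^{-1} r \le \mu(B(x,r)) \le c_1 r$ for all $x\in(0,1]$ and $0 < r \le \diam(0,1]$. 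Hence the constant $s$ of \eqref{eqah} is $1$, which explains why the critical threshold in Theorem \ref{cfds2} is $1-\alpha$ rather than $s-\alpha$.

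For exponential mixing in the sense of Definition 2, I would appeal to the spectral theory of the transfer (Gauss--Kuzmin--Wirsing) operator on a suitable function space. Philipp \cite{Phi}, together with standard consequences of the quasi-compactness of the transfer operator acting on, say, functions of bounded variation, yields that there exist constants $C>0$ and $0<\rho<1$ with
\[
\bigl|\mu(E \cap T_G^{-n} F) - \mu(E)\mu(F)\bigr| \le C\rho^n \mu(F)
\]
for every ball $E \subset (0,1]$ and every measurable $F$ with $\mu(F)>0$; dividing by $\mu(F)$ gives the conditional form required by Definition 2. This is the step where one must be careful, since the mixing estimates in the literature are more often phrased as decay of correlations against Hölder or BV observables; the reduction to the form in Definition 2 via indicator functions is the main technical obstacle, but for balls (which have BV indicator function with bounded BV norm as the radius varies) it is standard.

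With the two hypotheses verified and $s=1$, Theorem \ref{dc1} applies to $E_G(x) = E(x)$ and any analytic $G \subset (0,1]$. The three cases in Theorem \ref{dc1} become the three cases of Theorem \ref{cfds2}, and the ``moreover'' clause of Theorem \ref{dc1} under condition~(C) gives $\dim_{\rm P}(E_G(x) \cap G) = \dim_{\rm P}(G)$ for $\mu$-a.e.\ $x$ whenever $\dim_{\rm P}(G) > 1-\alpha$. Since $\mu$ is equivalent to Lebesgue measure on $(0,1]$, the exceptional null set can equivalently be described as Lebesgue-null, so ``$\mu$-a.e.'' and ``a.e.'' coincide in the statement.
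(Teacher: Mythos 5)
Your proposal is correct and follows essentially the same route as the paper: Section 6.1 justifies Theorem \ref{cfds2} precisely by noting that the Gauss measure is equivalent to Lebesgue measure (hence Ahlfors $1$-regular, so $s=1$) and that exponential mixing holds by Philipp \cite{Phi}, and then invoking Theorem \ref{dc1}. Your added detail on the two-sided density bounds and on reducing the correlation-decay estimates to the conditional form of Definition 2 is a reasonable fleshing-out of what the paper leaves implicit, but it is not a different argument.
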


\subsection{The $\beta$-dynamical system}
 For a real number $\beta>1$, define the transformation $T_\beta:[0,1]\to[0,1]$ by $$T_\beta: x\mapsto \beta x\bmod 1.$$
Let $\mu$ be the Parry measure with the density $$
h(x)={\left(\int_{0}^1 \sum_{n: T^n1<x}\frac{1}{\beta^n}dx\right)^{-1} \sum_{n: T^n1<x}}\frac{1}{\beta^n}.
$$
It was shown by \cite{renyi} that the Parry measure $\mu$ is invariant under $T_\beta$ and equivalent to the Lebesgue measure 
$\LL$. Hence $\mu$ is Ahlfors regular. From Philipp \cite{Phi}, $T_{\beta}$ is exponentially mixing. Combining these, we obtain 
that the $\beta$-dynamical system $([0,1], T_\beta)$ satisfies all the conditions stated in our results. For a given $x\in[0,1]$, 
define the dynamical covering set
\[E_{\beta}(x)=\{y\in [0,1]\colon y\in B(T_{\beta}^nx,\ell_n)~{\rm i.o.}\}.\]

\begin{theorem}\label{beta1}
Let $([0,1], T_\beta)$ be the $\beta$-dynamical system endowed with the Parry measure $\mu$. 
 Then for any analytic set $G\subset [0,1]$ we have
 \begin{equation*}
\begin{array}{cl}
E_{\beta}(x)\cap G=\varnothing& {\rm if}~\dim_{\rm P}(G)<1-\alpha,\\[2ex]
E_{\beta}(x)\cap G\ne\varnothing& {\rm if}~\dim_{\rm H}(G)>1-\alpha
 \end{array}
\end{equation*}
for $\mu$-{\rm a.e.} $x\in[0,1]$. Under the corresponding condition (C), if $\dim_{\rm P}(G)>1-\alpha$,
\begin{equation*}
	 E_{\beta}(x)\cap G\ne\varnothing\quad {\rm a.e.}
\end{equation*}
\end{theorem}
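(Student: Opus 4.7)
The plan is to deduce Theorem \ref{beta1} as a direct instantiation of Theorem \ref{dc3} with $s=1$, so the entire task reduces to checking that the quadruple $([0,1], \mathscr{B}, \mu, T_\beta, |\cdot|)$ satisfies the hypotheses of Theorem \ref{dc3}. The paragraph immediately before the theorem already records the two facts we need from the literature, so the proof is essentially an assembly rather than a new argument.

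First, I would verify Ahlfors regularity of the Parry measure with $s=1$. The explicit density $h$ recalled in the excerpt is of the form $h(x) = C_\beta \sum_{n\colon T^n 1 < x} \beta^{-n}$, and a standard bound (see R\'enyi \cite{renyi}) gives constants $0 < m_\beta \le M_\beta < \infty$ with $m_\beta \le h(x) \le M_\beta$ for all $x \in [0,1]$. Hence for any $x \in [0,1]$ and $0 < r \le 1$,
\[
m_\beta\,\mathcal{L}\bigl(B(x,r)\cap[0,1]\bigr) \;\le\; \mu\bigl(B(x,r)\bigr) \;\le\; M_\beta\,\mathcal{L}\bigl(B(x,r)\cap[0,1]\bigr),
\]
and since $\mathcal{L}(B(x,r)\cap[0,1])$ is comparable to $r$ uniformly in $x\in[0,1]$, the measure $\mu$ is Ahlfors $1$-regular. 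Thus $s=1$ plays the role of the regularity exponent in our framework.

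Second, I would invoke Philipp \cite{Phi} (already cited in the excerpt) to conclude that $T_\beta$ is exponentially mixing with respect to $\mu$ in the sense of our Definition of exponentially mixing m.m.p.s.: there exist $C>0$ and $\rho\in(0,1)$ with $|\mu(E\mid T_\beta^{-n}F)-\mu(E)| \le C\rho^n$ for all balls $E$ and all measurable $F$ with $\mu(F)>0$. With both conditions verified, $([0,1], \mathscr{B}, \mu, T_\beta, |\cdot|)$ is an exponentially mixing m.m.p.s.\ whose invariant measure is Ahlfors $1$-regular.

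Finally, since $\{\ell_n\}$ is assumed throughout to have upper Besicovitch--Taylor index $\alpha<s=1$ and $E_\beta(x)$ in the statement is precisely the dynamical covering set of \eqref{Eq:DyCov}, Theorem \ref{dc3} applied with $s=1$ yields the first two dichotomies of Theorem \ref{beta1} for $\mu$-almost every $x\in[0,1]$, as well as the last conclusion under condition (C). I do not anticipate a genuine obstacle here: the only non-cosmetic point is to confirm from \cite{renyi} that $h$ is bounded away from $0$ and $\infty$ (a classical fact), and to confirm that the mixing statement in Philipp \cite{Phi} is strong enough to produce the conditional-probability form used in our Definition \ref{exp}. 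Both are standard, so the proof of Theorem \ref{beta1} occupies only a few lines of verification followed by a citation of Theorem \ref{dc3}.
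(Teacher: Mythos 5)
Your proposal is correct and is essentially identical to the paper's own treatment: the paper likewise establishes Theorem \ref{beta1} by noting that the Parry measure is equivalent to Lebesgue measure with density bounded away from $0$ and $\infty$ (hence Ahlfors $1$-regular) and that $T_\beta$ is exponentially mixing by Philipp, and then citing Theorem \ref{dc3} with $s=1$. Your added detail on why the two-sided density bound yields Ahlfors $1$-regularity is a slight strengthening of the paper's terser ``equivalent to Lebesgue, hence Ahlfors regular,'' but the route is the same.
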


\begin{theorem}\label{beta2}
Let $([0,1], T_\beta)$ be the $\beta$-dynamical system endowed with the Parry measure $\mu$. For any analytic set $G\subset [0,1]$ we have a.e.
\begin{align*}
            \dim_{\rm H}(E_{\beta}(x)\cap G) \left\{\begin{array}{ll}
            \le \dim_{\rm P}(G)+\alpha-1& \quad \text{if~}\dim_{\rm P}(G)\ge 1-\alpha,\\[2ex]
             =-\infty & \quad \text{if~}\dim_{\rm P}(G)< 1-\alpha,\\[2ex]
             \ge\dim_{\rm H}(G)+\alpha-1 &\quad \text{if~}\dim_{\rm H}(G)>1-\alpha.
          \end{array}\right.
           \end{align*}
Moreover, if $\dim_{\rm P}(G)>1-\alpha$ and  the condition (C) is satisfied, then $\dim_{\rm P}(E_{\beta}(x)\cap G)=\dim_{\rm P}(G)$ ~{\rm a.e.}
\end{theorem}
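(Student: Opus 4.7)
The plan is to reduce Theorem \ref{beta2} to a direct application of Theorem \ref{dc1} with $X=[0,1]$, $T=T_\beta$, $\mu$ the Parry measure, and $s=1$. Thus the work consists entirely in verifying the two structural hypotheses of Theorem \ref{dc1}, namely that $\mu$ is Ahlfors $1$-regular and that the system $([0,1],\mathscr{B},\mu,T_\beta,|\cdot|)$ is exponentially mixing in the sense of the definition in Section 2.

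First I would check Ahlfors regularity. The Parry density
\[
h(x)=\Bigl(\int_0^1\sum_{n:T_\beta^n 1<x}\beta^{-n}\,dx\Bigr)^{-1}\sum_{n:T_\beta^n 1<x}\beta^{-n}
\]
satisfies the well-known two-sided bound $c^{-1}\le h(x)\le c$ on $[0,1]$ for some finite $c>1$, because the tail sum $\sum_{n:T_\beta^n1<x}\beta^{-n}$ is trapped between $1$ and $\beta/(\beta-1)$. Consequently, for every $x\in[0,1]$ and every $0<r\le 1$,
\[
(c\|h\|_\infty^{-1})^{-1}r\le \mu(B(x,r))=\int_{B(x,r)\cap[0,1]}h(y)\,dy\le 2c\,r,
\]
so $\mu$ is Ahlfors $1$-regular (with a harmless adjustment near the endpoints absorbed into the constant $c_1$ of \eqref{eqah}). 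Second, for exponential mixing, I would cite Philipp's result that $T_\beta$ has exponential decay of correlations on balls against arbitrary measurable sets with respect to $\mu$; translating this into the conditional form stated in Section 2 is a routine rewriting of $|\mu(E\cap T_\beta^{-n}F)-\mu(E)\mu(F)|\le C\rho^n \mu(F)$ as $|\mu(E\mid T_\beta^{-n}F)-\mu(E)|\le C\rho^n$ after dividing by $\mu(F)$.

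Once these two hypotheses are in hand, every assumption in the statement of Theorem \ref{dc1} is satisfied with $s=1$, and the dynamical covering set $E_\beta(x)$ is precisely the set $E(x)$ of \eqref{Eq:DyCov} for this system. Applying Theorem \ref{dc1} gives, for $\mu$-a.e.\ $x\in[0,1]$ and every analytic $G\subset[0,1]$, the three-case trichotomy
\[
\dim_{\rm H}(E_\beta(x)\cap G)\begin{cases} \le \dim_{\rm P}(G)+\alpha-1 & \dim_{\rm P}(G)\ge 1-\alpha,\\[1ex] =-\infty & \dim_{\rm P}(G)<1-\alpha,\\[1ex] \ge \dim_{\rm H}(G)+\alpha-1 & \dim_{\rm H}(G)>1-\alpha,\end{cases}
\]
together with $\dim_{\rm P}(E_\beta(x)\cap G)=\dim_{\rm P}(G)$ whenever $\dim_{\rm P}(G)>1-\alpha$ and (C) holds. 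Finally, since the conclusions of Theorem \ref{dc1} hold for $\mu$-a.e.\ $x$ and the Parry measure $\mu$ is equivalent to Lebesgue measure $\mathcal{L}$, they also hold for Lebesgue-a.e.\ $x$, which is the sense of ``a.e.'' typically used for $\beta$-expansions.

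I do not expect any genuine obstacle: the only mildly delicate point is keeping the Ahlfors constant uniform at the endpoints $0$ and $1$ (where a ball $B(x,r)$ is effectively a half-interval), but this only changes $c_1$ by a factor of $2$. Everything else is a citation plus an immediate appeal to Theorem \ref{dc1}.
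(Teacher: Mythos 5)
Your proposal is correct and follows essentially the same route as the paper: the paper likewise verifies that the Parry measure is Ahlfors $1$-regular (via its density being bounded between positive constants, hence equivalent to Lebesgue measure) and cites Philipp for exponential mixing, and then Theorem \ref{beta2} is read off as the specialization of Theorem \ref{dc1} with $s=1$. The paper gives no further argument beyond these verifications, so your write-up matches its proof in both substance and level of detail.
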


\subsection{The middle-third Cantor set}
Our results are applicable to the middle-third Cantor set $C_{1/3}$. In fact, the results also hold for a range of homogeneous self-similar 
sets satisfying the open set condition.

 Let $T_3x=3x$ (mod)1 be the natural map on $C_{1/3}$, and $\mu$ be the standard Cantor measure. Let $\gamma=\log_3 2$ be 
 the Hausdorff dimension of $C_{1/3}$. From Lemma 3.2 in \cite{Wang}, $\mu$ is exponentially mixing. Also $\mu$ is Ahlfors $\gamma$-regular. Then all the conditions are fulfilled for Theorem \ref{main1} and \ref{main2}. Define the 
 dynamical covering set
\[E_3(x)=\{y\in C_{1/3}\colon y\in B(T_3^nx,\ell_n)~ \ \, {\rm i.o.}\},\]
where $x\in C_{1/3}$ is a given point.

\begin{theorem}\label{similar1}
Let $\mu$ be the standard Cantor measure. Then for any analytic set $G\subset C_{1/3}$, we have
\begin{equation*}
\begin{array}{cl}
E_3(x)\cap G=\varnothing& {\rm if}~\dim_{\rm P}(G)<\gamma-\alpha,\\[2ex]
E_3(x)\cap G\ne\varnothing& {\rm if}~\dim_{\rm H}(G)>\gamma-\alpha
 \end{array}
\end{equation*}
for $\mu$-{\rm a.e.} $x\in C_{1/3}$. Under the corresponding condition (C), if $\dim_{\rm P}(G)>\gamma-\alpha$, 
	$E_3(x)\cap G\ne\varnothing$ holds for $\mu$-{\rm a.e.} $x\in C_{1/3}$.
\end{theorem}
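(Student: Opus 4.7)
The plan is to recognize Theorem \ref{similar1} as a direct specialization of the general dynamical result Theorem \ref{dc3} to the specific m.m.p.s.\ $(C_{1/3},\mathscr{B},\mu,T_3,d)$, where $d$ is the Euclidean metric restricted to $C_{1/3}$ and $\mu$ is the standard Cantor measure. Once I verify that this system satisfies the two structural hypotheses of Theorem \ref{dc3}, namely that $\mu$ is Ahlfors $s$-regular for some $s\in(0,\infty)$ and that the system is exponentially mixing, the three conclusions follow verbatim with $s=\gamma=\log_3 2$.

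First I would check Ahlfors $\gamma$-regularity of $\mu$. This is classical: if $x\in C_{1/3}$ and $0<r\le 1$, pick the unique integer $n\ge 0$ with $3^{-(n+1)}\le r<3^{-n}$. The ball $B(x,r)$ intersects at most a bounded number (independent of $x$ and $r$) of the level-$n$ basic intervals of $C_{1/3}$, each of which carries $\mu$-mass $2^{-n}$. Conversely, $B(x,r)$ contains the level-$(n+1)$ basic interval through $x$, which has mass $2^{-(n+1)}$. Since $2^{-n}=3^{-n\gamma}\asymp r^\gamma$, this yields $c_1^{-1}r^\gamma\le \mu(B(x,r))\le c_1 r^\gamma$ for a uniform constant $c_1$.

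Next I would record exponential mixing. This is Lemma 3.2 of \cite{Wang}, which provides constants $C>0$ and $\rho\in(0,1)$ such that $|\mu(E\mid T_3^{-n}F)-\mu(E)|\le C\rho^n$ for all balls $E\subset C_{1/3}$ and all $F\in\mathscr{B}$ with $\mu(F)>0$; this is exactly the defining property of an exponentially mixing m.m.p.s.\ in our sense. Together with the Ahlfors regularity verified in the previous step, the system $(C_{1/3},\mathscr{B},\mu,T_3,d)$ satisfies every hypothesis of Theorem \ref{dc3} with $s=\gamma$.

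Finally I would apply Theorem \ref{dc3} with $X=C_{1/3}$, $s=\gamma$, and the given sequence $\{\ell_n\}$ whose upper Besicovitch--Taylor index is $\alpha<\gamma$. The set $E(x)$ in that theorem specializes to $E_3(x)=\{y\in C_{1/3}:y\in B(T_3^n x,\ell_n)\ \text{i.o.}\}$, so the three conclusions of Theorem \ref{dc3}, namely $E_3(x)\cap G=\varnothing$ when $\dim_{\rm P}G<\gamma-\alpha$, $E_3(x)\cap G\ne\varnothing$ when $\dim_{\rm H}G>\gamma-\alpha$, and $E_3(x)\cap G\ne\varnothing$ when $\dim_{\rm P}G>\gamma-\alpha$ under condition (C), transfer immediately. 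There is no real obstacle here beyond the verification of the two structural hypotheses; the only mild subtlety is making sure that the notion of ball used in the Ahlfors regularity bound is the intrinsic ball in $(C_{1/3},d)$, which is clear since $B(x,r)\cap C_{1/3}$ differs from the ambient Euclidean ball only by an irrelevant factor that is absorbed into $c_1$.
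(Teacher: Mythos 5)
Your proposal is correct and follows exactly the route the paper takes: Section 6.3 likewise verifies that the standard Cantor measure is Ahlfors $\gamma$-regular with $\gamma=\log_3 2$, cites Lemma 3.2 of \cite{Wang} for the exponential mixing of $(C_{1/3},T_3,\mu)$, and then invokes Theorem \ref{dc3} with $s=\gamma$. Your added detail on the regularity computation and on intrinsic versus ambient balls is a harmless elaboration of what the paper states without proof.
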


\begin{theorem}\label{similar2}
For any analytic set $G\subset C_{1/3}$ we have a.e.
\begin{align*}
            \dim_{\rm H}(E_3(x)\cap G) \left\{\begin{array}{ll}
            \le \dim_{\rm P}(G)+\alpha-\gamma& \quad \text{if~}\dim_{\rm P}(G)\ge \gamma-\alpha,\\[2ex]
             =-\infty & \quad \text{if~}\dim_{\rm P}(G)< \gamma-\alpha,\\[2ex]
             \ge\dim_{\rm H}(G)+\alpha-\gamma &\quad \text{if~}\dim_{\rm H}(G)>\gamma-\alpha.
          \end{array}\right.
           \end{align*}
Moreover, if $\dim_{\rm P}(G)>\gamma-\alpha$ and the condition (C) is satisfied, then $\dim_{\rm P}(E_3(x)\cap G)=\dim_{\rm P}(G)$~~{\rm a.e.}
\end{theorem}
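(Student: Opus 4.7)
The plan is to obtain Theorem~\ref{similar2} as a direct application of Theorem~\ref{dc1} once we have verified that $(C_{1/3},\mathscr{B},\mu,T_3,d)$ fits into its framework, with the ``ambient dimension'' $s$ in that theorem played by $\gamma=\log_3 2$. Thus the work is purely in checking hypotheses; no further dimension-theoretic arguments are needed.

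First I would record the two structural facts about the system. Ahlfors regularity of the standard Cantor measure at exponent $\gamma=\log_3 2$ is classical: on each basic cylinder of generation $n$ (a closed interval of length $3^{-n}$) the measure $\mu$ equals $2^{-n}=3^{-n\gamma}$, and every ball $B(x,r)\subset\R$ with $x\in C_{1/3}$ is sandwiched between cylinders of comparable generation, giving $c_1^{-1}r^\gamma\le \mu(B(x,r))\le c_1\,r^\gamma$ for some $c_1\ge 1$ and all $0<r\le\mathrm{diam}\,C_{1/3}$. Exponential mixing of $T_3$ with respect to $\mu$ was established in Lemma~3.2 of \cite{Wang}; I would cite it verbatim, noting that its formulation is precisely the definition of exponential mixing used here, so that $(C_{1/3},\mathscr{B},\mu,T_3,d)$ is an exponentially mixing m.m.p.s.\ in the sense of the paper.

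Next I would invoke Theorem~\ref{dc1} with $X=C_{1/3}$, $s=\gamma$, $T=T_3$, and $\mu$ the standard Cantor measure. The dynamical covering set defined by \eqref{Eq:DyCov} in this setting is exactly $E_3(x)$, and the upper Besicovitch–Taylor index $\alpha$ of $\{\ell_n\}$ from \eqref{Def:BTindex} is unchanged since $\alpha$ depends only on $\{\ell_n\}_{n\ge 1}$. Substituting $s=\gamma$ into the three branches of the statement of Theorem~\ref{dc1} yields the three branches of Theorem~\ref{similar2}, including the convention $\dim_{\rm H}\varnothing=-\infty$. The ``moreover'' clause transcribes the packing-dimension conclusion of Theorem~\ref{dc1} into the present notation once the condition (C) on $\{\ell_n\}$ is assumed. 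Since Theorem~\ref{dc1} holds for any analytic set $G$ and the middle-third Cantor set is compact metric (hence $G\subset C_{1/3}$ is analytic in $X=C_{1/3}$ whenever it is analytic in $\R$), no further regularity on $G$ need be imposed.

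There is no genuine obstacle here: the only thing to be careful about is the identification of the constants and the definitions. In particular I would briefly emphasize (i) that the ambient dimension $s$ in Theorem~\ref{dc1} is the Ahlfors regularity exponent of the reference measure, which in the Cantor case is $\gamma=\log_3 2$ rather than $1$, so that ``$s-\alpha$'' becomes ``$\gamma-\alpha$''; and (ii) that the condition (C) in the ``moreover'' part is a hypothesis on the sequence $\{\ell_n\}$, not on the dynamical system, and therefore carries over unchanged from the general setting. With these remarks, Theorem~\ref{similar2} follows from Theorem~\ref{dc1}, completing the proof.
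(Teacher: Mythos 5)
Your proposal is correct and follows essentially the same route as the paper: the paper likewise treats Theorem~\ref{similar2} as an instance of Theorem~\ref{dc1} with $s=\gamma=\log_3 2$, justified by the Ahlfors $\gamma$-regularity of the standard Cantor measure and the exponential mixing of $T_3$ from Lemma~3.2 of \cite{Wang}. Your added remarks on why $s-\alpha$ becomes $\gamma-\alpha$ and on condition (C) being a hypothesis on $\{\ell_n\}$ alone are accurate but not needed beyond what the paper records.
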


\subsection*{Acknowledgements}
This research of Bing Li and Zhangnan Hu was supported by  NSFC 11671151 and Guangdong Natural Science Foundation 2018B0303110005. The research of Yimin Xiao is partially supported by the NSF grant DMS-1855185.


\begin{thebibliography}{10}

\bibitem{AP}
M. Aspenberg and T. Persson, 
{\it Shrinking targets in parametrised families.} Math. Proc. Camb. Philos. Soc. 166 (2019), no. 2, 265--295. 

\bibitem{Bo}
M.~D. Boshernitzan,
{\it Quantitative recurrence results.} Invent. Math. 113 (1993), no. 3, 617--631.

\bibitem{Borel}
E. Borel, {\it Sur les s\'eries de Taylor.} Acta Math. 21 (1897), no. 1, 243--247. 

\bibitem{YA}
Y. Bugeaud and A. Durand,
{\it Metric Diophantine approximation on the middle-third Cantor set.}
J. Eur. Math. Soc. (JEMS) 18 (2016), no. 6, 1233--1272.

\bibitem{BW}
Y. Bugeaud and B.-W. Wang, 
{\it Distribution of full cylinders and the diophantine properties of the orbits in $\beta$--expansions.} 
J. Fractal Geom. 1 (2014), no. 2, 221--241. 

\bibitem{Durand08}
A. Durand,
{\it Sets with large intersection and ubiquity. }Math. Proc. Cambridge Philos. Soc. 144 (2008), no. 1, 119--144. 

\bibitem{Durand10}
A. Durand,
{\it On randomly placed arcs on the circle. } In: Recent developments in fractals and related fields, 343--351, 
Appl. Numer. Harmon. Anal., Birkh\"auser Boston, Boston, MA, 2010.

\bibitem{Dvor56}
A. Dvoretzky,
{\it On covering a circle by randomly placed arcs.} Proc. Nat. Acad. Sci. USA 42 (1956), 199--203.

\bibitem{FLL}
A.-H. Fan, T. Langlet and B. Li, 
{\it Quantitative uniform hitting in exponentially mixing systems.} In: Recent Developments in Fractals and Related 
Fields,  251--266,  Appl. Numer. Harmon. Anal., Birkh\"auser Boston,  Boston, MA, 2010.

\bibitem{FanScT13}
A.-H. Fan, J. Schmeling and S. Troubetzkoy, 
{\it A multifractal mass transference principle for Gibbs measures with applications to dynamical Diophantine 
approximation.} Proc. London Math. Soc. (3) 107 (2013), no. 5, 1173--1219.

\bibitem{FW}
A.-H. Fan and J. Wu, {\it On the covering by small random intervals.} Ann. Inst. H. Poincar\'e Probab. Statist. 40 (2004), 
no. 1, 125--131.

\bibitem{FJS}
D.-J. Feng, E. J\"arvenp\"a\"a, M. J\"arvenp\"a\"a and V. Suomala,
{\it Dimensions of random covering sets in Riemann manifolds.} Ann. Probab. 46 (2018), no. 3, 1542--1596. 


\bibitem{fur}
H. Furstenberg,
{\it Recurrence in ergodic theory and combinatorial number theory. M. B. Porter Lectures.} Princeton University Press. 
Princeton, N.J., 1981.


\bibitem{HK17}
A. Haynes and H. Koivusalo,
{\it A randomized version of the Littlewood conjecture.} J. Number Theory 178 (2017), 201--207. 

\bibitem{HV}
R. Hill and S. Velani, 
{\it The ergodic theory of shrinking targets.} Invent. Math. 119 (1995), no. 1, 175--198. 

\bibitem{How}
J. D. Howroyd, 
{\it On dimension and on the existence of sets of finite positive Hausdorff measure.} Proc. London Math. Soc. (3) 70 (1995), 
no. 3, 581--604.

\bibitem{ZL}
Z.-N. Hu and B. Li,
{\it Random covering sets in metric space with exponentially mixing property.} Statist. Probab. Lett. 168 (2021), 108922, 7 pp.

\bibitem{HLSW}
M. Hussain, B. Li, D. Simmons and B.-W. Wang,
{\it Dynamical Borel--Cantelli lemma for recurrence theory.} Ergodic Theory and Dynamical Systems, DOI: https://doi.org/10.1017/etds.2021.23.

\bibitem{JJKLS}
E. J\"arvenp\"a\"a, M. J\"arvenp\"a\"a, H. Koivusalo, B. Li and V. Suomala, 
{\it Hausdorff dimension of affine random covering sets in torus.} Ann. Inst. Henri Poincar\'e Probab. Stat. 50 (2014), no. 4, 
1371--1384.
	
\bibitem{JKLSX}
E. J\"arvenp\"a\"a, M. J\"arvenp\"a\"a, H. Koivusalo, B. Li, V. Suomala and Y.-M. Xiao, 
{\it Hitting probabilities of random covering sets in tori and metric spaces.} Electron. J. Probab. 22 (2017), paper no. 1, 18 pp.
		
\bibitem{JP}
H. Joyce and D. Preiss,
{\it On the existence of subsets of finite positive packing measure.} Mathematika, 42 (1995), no. 1, 15--24. 
	
\bibitem{Krs}
A. K\"aenm\"aki, T. Rajala and V. Suomala,
{\it Existence of doubling measures via generalised nested cubes.} Proc. Amer. Math. Soc. 140 (2012), no. 9, 3275--3281.
	
\bibitem{Kah00}
J.-P. Kahane,
{\it Random coverings and multiplicative processes.} Fractal geometry and stochastics, II (Greifswald/Koserow, 1998), 125--146, Progr. Probab., 46, Birkh\"auser, Basel, 2000. 
	
\bibitem{KPX}
D. Khoshnevisan, Y. Peres and Y.-M. Xiao,
{\it Limsup random fractals.} Electron. J. Probab. 5 (2000), no. 5, 24 pp.
	

\bibitem{LSV} 
J. Levesley, C. Salp and S. Velani, 
{\it On a problem of K. Mahler: Diophantine approximation and Cantor sets.} 
Math. Ann. 338 (2007), no. 1, 97--118.

\bibitem{LSX}
B. Li, N.-R. Shieh and Y.-M. Xiao,
{\it Hitting probabilities of the random covering sets.}
 In: Fractal Geometry and Dynamical Systems in Pure and Applied Mathematics. {II}. Fractals in
 Applied mathematics, 307--323. Contemp. Math. 601. Amer. Math. Soc., Providence, RI, 2013.
	
	
\bibitem{LS}
B. Li and V. Suomala, 
{\it A note on the hitting probabilities of random covering sets.}
Ann. Acad. Sci. Fenn. Math. 39 (2014), no. 2, 625--633. 
	
\bibitem{LWWX}
B. Li, B.-W. Wang, J. Wu and J. Xu, 
{\it The shrinking target problem in the dynamical system of continued fractions.} 
Proc. Lond. Math. Soc. 108 (2014), no. 1, 159--186. 

\bibitem{Liao}
L. Liao and S. Seuret,
{\it Diophantine approximation by orbits of expanding Markov maps}. 
Ergodic Theory and Dynamical Systems, 33 (2013), no. 2, 585--608.
	
\bibitem{lyons}
R. Lyons, 
{\it Random walks and percolation on trees}.
 Ann. Probab. 18 (1990), no. 3, 931--958.
	
\bibitem{Mahler}
K. Mahler, 
{\it Some suggestions for further research.}
Bull. Austral. Math. Soc. 29 (1984), no. 1, 101--108. 
	
\bibitem{MR}
J. Myjak and R. Rudnicki, 
{\it  Box and packing dimensions of typical compacts sets}. 
Monatsh. Math. 131 (2000), no. 3, 223--226.
		
\bibitem{Pere}
Y. Peres, {\it Intersection-equivalence of Brownian paths and certain branching processes}.
Comm. Math. Phys. 177 (1996), no. 2, 417--434. 
	
\bibitem{PR17}
T. Persson and M. Rams,
{\it On shrinking targets for piecewise expanding interval maps.}
Ergodic Theory Dynam. Systems 37 (2017), no. 2, 646--663. 
	
\bibitem{Phi}
W.~Philipp,
{\it Some metrical theorems in number theory.} Pacific J. Math. 20 (1967), 109--127.
	
\bibitem{CRN}
C. Ryll-Nardzewski, 
{\it On the ergodic theorems. \uppercase\expandafter{\romannumeral2}. Ergodic theory of continued fractions}. 
Studia Math. 12 (1951), 74--79.
	
\bibitem{renyi}
 A. Re\'nyi, 
 {\it Representations for real numbers and their ergodic properties.}
 Acta Math. Acad. Sci. Hung. 8 (1957), 477--493.
	
\bibitem{Shepp72}
L. Shepp,
{\it Covering the circle with random arcs.} Isr. J. Math. 11 (1972) 328--345.
	
\bibitem{Tri}
C. Tricot, 
{\it Two definitions of fractional dimension.} 
Math. Proc. Cambridge Philos. Soc. 91 (1982), no. 1, 57--74. 
	
\bibitem{Wang}
B.-W. Wang, J. Wu and J. Xu,
{\it Dynamical covering problems on the triadic Cantor set}.
C. R. Math. Acad. Sci. Paris 355 (2017), no. 7, 738--743.

\bibitem{Hanyu}
H. Yu, {\it Rational points near self-similar sets.} arXiv:2101.05910, 2021.
        
	
	
\end{thebibliography}
\end{document}